\newcommand{\qed}{\hfill \ensuremath{\Box}}
\newenvironment{proof}{\vspace{1ex}\noindent{\it Proof.}\hspace{0.5em}}
	{\hfill\qed\vspace{1ex}}
\newtheorem{theorem}{Theorem}[section]
\newtheorem{lemma}[theorem]{Lemma}
\newtheorem{proposition}[theorem]{Proposition}
\newtheorem{corollary}[theorem]{Corollary}
\newtheorem{conjecture}[theorem]{Conjecture}
\newtheorem{definition}[theorem]{Definition}
\DeclareMathOperator{\R}{\mathrm{R}}
\DeclareMathOperator{\Z}{\mathbf{Z}}
\DeclareMathOperator{\F}{\mathbf{F}}
\DeclareMathOperator{\Ps}{\mathbf{P}}
\DeclareMathOperator{\rmd}{\mathrm{d}\!}
\DeclareMathOperator{\N}{\mathbf{N}}
\DeclareMathOperator{\im}{\mathrm{im}}
\DeclareMathOperator{\Spec}{\operatorname{Spec}}
\DeclareMathOperator{\Hom}{\operatorname{Hom}}
\DeclareMathOperator{\Mor}{\operatorname{Mor}}
\DeclareMathOperator{\Lie}{\mathrm{Lie}}
\DeclareMathOperator{\Ext}{\operatorname{Ext}}
\DeclareMathOperator{\Sh}{\mathrm{Sh}}
\DeclareMathOperator{\Id}{\mathrm{Id}}
\DeclareMathOperator{\Og}{\mathcal{O}}
\DeclareMathOperator{\Pic}{\mathrm{Pic}}
\DeclareMathOperator{\rk}{\mathrm{rk}}
\DeclareMathOperator{\et}{\acute{\mathrm{e}}{\mathrm{t}}}
\DeclareMathOperator{\Gm}{\mathbf{G}_m}
\DeclareMathOperator{\Ga}{\mathbf{G}_a}
\DeclareMathOperator{\fppf}{\mathrm{fppf}}
\DeclareMathOperator{\sHom}{\mathscr{H}\!\mathit{om}}
\DeclareMathOperator{\sExt}{\mathscr{E}\!\mathit{xt}}
\DeclareMathOperator{\Res}{\mathrm{Res}}
\DeclareMathOperator{\sep}{{^\mathrm{sep}}}
\DeclareMathOperator{\RP}{{\mathrm{RP}}}
\DeclareMathOperator{\RPS}{{\mathrm{RPS}}}
\DeclareMathOperator{\Ab}{{\mathrm{Ab}}}
\title{On Bosch-Lütkebohmert-Raynaud's Conjecture I}
\author{Otto Overkamp}
\date{}
\begin{document}
\maketitle
{\abstract{Let $G$ be a smooth algebraic group over the field of rational functions of an excellent Dedekind scheme $S$ of equal characteristic $p>0.$ A Néron lft-model of $G$ is a smooth separated model $\mathscr{G} \to S$ of $G$ satisfying a universal property. Predicting whether a given $G$ admits such a model is a very delicate (and, in general, open) question if $S$ has infinitely many closed points, which is the subject of \it Conjecture I \rm due to Bosch-Lütkebohmert-Raynaud. This conjecture was recently proven by T. Suzuki and the author if the residue fields of $S$ at closed points are perfect, but refuted in general. The aim of the present paper is two-fold: firstly, we give a new construction of counterexamples which is more general and provides a conceptual explanation for the only counterexamples known previously, as well as providing many new counterexamples. Secondly, we shall give a new and elementary proof of Conjecture I in the case of perfect residue fields. Both parts make use of the concept of weakly permawound unipotent groups recently introduced by Rosengarten.}}
\tableofcontents
\section{Introduction}
\subsection{Background}
Let $S$ be an excellent Dedekind scheme over $\F_p$ for some prime number $p.$ Let $K$ be the field of rational functions of $S$ and let $G$ be a smooth algebraic group\footnote{By an \it algebraic group \rm we shall always mean a group scheme of finite type over a field. All group schemes appearing in this article will be commutative.} over $K.$ A \it Néron lft-model \rm \cite[Chapter 10.1, Definition 1]{BLR} of $G$ over $S$ is a smooth separated group scheme $\mathscr{G} \to S$ whose generic fibre is $G$ and which satisfies the \it Néron mapping property: \rm for each smooth morphism $T\to S$ and for each map $\phi\colon T\times_S\Spec K \to G$, there is a unique map $T\to \mathscr{G}$ over $S$ extending $\phi.$ We shall call $\mathscr{G}$ a \it Néron model \rm of $G$ if it is moreover of finite presentation (or, equivalently, quasi-compact) over $S$ \cite[Chapter 1.2, Definition 1]{BLR}. The role played by Néron (lft-)models of algebraic groups is hard to overestimate; for example, the notions of \it good reduction, semiabelian reduction, \rm and \it bad reduction \rm of Abelian varieties are defined in terms of their Néron models, and the Néron-Ogg-Shafarevich criterion, which relates the reduction behaviour of an Abelian variety to the Galois representation on its Tate module, follows almost immediately from the existence of Néron models for Abelian varieties \cite[Chapter 7.4, Theorems 5 and 6]{BLR}. 

Given a smooth algebraic group $G$ over $K,$ it is therefore a fundamental question whether a Néron (lft-)model of $G$ exists. This is certainly not always the case, since the presence of a closed subgroup of $G$ isomorphic to $\Ga$ precludes the existence of a Néron lft-model of $G$ \cite[Chapter 10.1, Proposition 8]{BLR}. If $S$ is \it local, \rm
i. e. if $S$ is the spectrum of an excellent discrete valuation ring, the situation is completely understood: a Néron lft-model of $G$ exists \it if and \rm only if $G$ contains no closed subgroups isomorphic to $\Ga,$ and the Néron lft-model is quasi-compact over $S$ if and only if, moreover, $G$ contains no torus split by an unramified extension of $K$ \cite[Chapter 10.2, Theorems 1 and 2]{BLR}.

However, if $S$ is \it global, \rm i. e. if $S$ has infinitely many closed points, the question is considerably more delicate. The first indication for this was an example due to Oesterlé \cite[Chapter 10.1, Example 11]{BLR}, which shows that, if $S$ is global and $K'$ is a non-trivial finite purely inseparable extension of $K,$ the algebraic group $G=\Res_{K'/K}\Gm / \Gm$ admits a Néron lft-model $\mathscr{G}\to S$ which is not quasi-compact, but such that the Néron lft-models $\mathscr{G}\times_S \Spec \Og_{S,s}$ of $G$ over $\Og_{S,s}$ are quasi-compact for each closed point $s$ of $S.$ This is explained by the fact that the groups of connected components of $\mathscr{G}$ are finite but non-trivial for all such closed points, and shows that, in general, a \it local-to-global principle \rm for the existence of Néron (lft-)models cannot be expected. The group just considered is clearly unirational\footnote{An algebraic group $G$ over $K$ is \it unirational \rm if its underlying scheme is, i. e. if there exists a scheme-theoretically dominant morphism $U\to G$ with $U\subseteq \mathbf{A}^n_K$ open for some $n.$ Such groups are always smooth, connected, and affine.} over $K.$ These observations led Bosch-Lütkebohmert-Raynaud to make the following 
\begin{conjecture}\rm (Cf. \cite[Chapter 10]{BLR}) \it \label{BLRconj}
Let $G$ be a smooth algebraic group over $K.$\\
(I) If $G$ contains no subgroup isomorphic to $\Ga,$ then $G$ admits a Néron lft-model over $S.$ \\
(II) Moreover, if $G$ contains no non-trivial unirational closed subgroup, then $G$ admits a Néron model over $S.$
\end{conjecture}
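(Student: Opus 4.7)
The plan is to establish part (I) of Conjecture \ref{BLRconj} under the additional hypothesis — known to be necessary in view of the counterexamples to be constructed elsewhere in this paper — that the residue fields of $S$ at closed points are perfect. Part (II) would then be addressed by refining the argument with a tracking of unirational subgroups, which is a second-order concern.

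The first step is a reduction to the case that $G$ is a $K$-wound unipotent group. Passing to the identity component and invoking the classical existence results of Néron and Raynaud for abelian varieties and tori over excellent Dedekind schemes (both of which have Néron lft-models in full generality), the problem essentially reduces to the unipotent quotient of $G$. The hypothesis that $G$ contains no closed subgroup isomorphic to $\Ga$ then translates precisely into $K$-woundness of this unipotent group $U$.

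The second step is a local-to-global construction. By the local case of BLR (\cite[Chapter 10.2, Theorems 1 and 2]{BLR}), each $\Og_{S,s}$ admits a Néron lft-model $\mathscr{U}_s$ of $U$. After choosing a dense open $V \subseteq S$ over which $U$ spreads out to a smooth separated group scheme $\mathscr{U}_V \to V$, the task is to glue $\mathscr{U}_V$ with the local models $\mathscr{U}_s$, one for each $s \in S \setminus V$, into a global Néron lft-model. This gluing succeeds provided that, at all but finitely many closed points $s \in V$, the base change $\mathscr{U}_V \times_V \Spec \Og_{S,s}$ is itself a Néron lft-model of $U$. If so, one enlarges the finite set of bad points to include the discrepancies and patches along the generic fibre in the usual way.

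This last assertion is the heart of the matter, and it is where weakly permawound unipotent groups intervene. My plan is to use Rosengarten's structure theory to reduce, via short exact sequences, to the case that $U$ itself is weakly permawound, and then to show directly that a weakly permawound group has the desired "almost-everywhere Néron property" over $S$ when the residue fields are perfect. Concretely, the component groups of the local models $\mathscr{U}_s$ should vanish at all but finitely many $s$, so that the naive smooth model already satisfies the Néron mapping property at these points. The main obstacle I foresee is establishing this vanishing — controlling the component groups uniformly as $s$ varies. Perfectness of the residue fields is the decisive hypothesis here: over an imperfect residue field, a weakly permawound unipotent group can have a non-trivial component group, and without a global reason why this happens at only finitely many points the argument collapses; indeed, it is precisely the failure of such finiteness that must fuel the counterexamples in the imperfect case.
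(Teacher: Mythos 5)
Your overall architecture (reduce to the wound unipotent case, invoke local existence from BLR, and settle a local-to-global step using Rosengarten's theory of weakly permawound groups) matches the paper's in outline, but the step you yourself identify as the heart of the matter rests on a false expectation. You propose to show that the component groups of the local Néron lft-models $\mathscr{U}_s$ vanish at all but finitely many $s,$ so that a spread-out finite-type smooth model already satisfies the Néron mapping property there. This fails for the basic building block of the whole theory: by Rosengarten's filtration theorem, every wound unipotent weakly permawound group over $K$ (with $r=1$) is an iterated extension of copies of $\boldsymbol{\nu}_1(1)_K\cong \mathbf{G}_{\mathrm{m}}^{(1/p)}/\Gm$ (Oesterl\'e's group $\Res_{K'/K}\Gm/\Gm$) and of an infinitesimal group, and Oesterl\'e's example has finite but \emph{non-trivial} component groups at \emph{every} closed point of $S$ --- this is precisely why its Néron lft-model fails to be quasi-compact. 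So no finite-type model over any dense open is Néron at even a single closed point, and your gluing criterion is never met. The correct local-to-global criterion (BLR, Ch.\ 10.1, Prop.\ 9, which underlies Proposition \ref{Dévprop}) compares \emph{identity components}: it asks that the Lie algebra of a spread-out model agree with that of the local Néron models at almost all points, and it tolerates non-trivial component groups everywhere. Perfectness of the residue fields enters not through vanishing of component groups but through the fact that the extra components to be adjoined at each closed point form a finitely generated (indeed étale) group; in the imperfect case these would be the positive-dimensional groups $\boldsymbol{\nu}_1(q-1)_{\kappa(s)},$ which is exactly the paper's obstruction in Theorem \ref{Nétnonexthm}.

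Concretely, the paper proceeds differently at the crucial point: for the building block $\boldsymbol{\nu}_1(1)_K$ the global Néron lft-model is exhibited explicitly as $\mathscr{G}_{\mathrm{m}}^{(1/p)}/\mathscr{G}_{\mathrm{m}};$ general weakly permawound wound groups are then handled by dévissage through Rosengarten's filtration, with the infinitesimal subquotient $N_0$ absorbed by a Frobenius-twist argument (Proposition \ref{Nnonsmprop}); arbitrary $p$-torsion wound groups are \emph{embedded as closed subgroups} of weakly permawound ones via Rosengarten's embedding theorem (closed subgroups inherit Néron lft-models by BLR, Ch.\ 10.1, Prop.\ 4) --- they are not, as you suggest, reached from weakly permawound groups by short exact sequences. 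Finally, your reduction of general $G$ to its ``unipotent quotient'' glosses over the totally non-smooth kernel $L/L^\natural$ arising from the Chevalley-type dévissage, which requires a separate argument (again Proposition \ref{Nnonsmprop}) rather than the classical results for tori and abelian varieties alone.
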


Both conjectures are known to be true if $G = \Pic^0_{C/K}$ for some proper geometrically reduced curve $C$ over $K$ \cite{OvGR}. However, for more general $G,$ Conjecture I above usually fails. More precisely, T. Suzuki and the author recently established the following

\begin{theorem} \rm (Cf. \cite{OS}) \it  
(i) Suppose that the residue fields of $S$ are perfect. Then Conjecture I holds for $S.$\\
(ii) Assume $[K:K^p]<\infty$ and that for some (equivalently, every) closed point $s\in S,$ the residue field $\kappa(s)$ is imperfect. Then there exists a smooth connected algebraic group $G$ over $K$ with $\Hom_K(\Ga,G)=0$ which does not admit a Néron lft-model over $S.$ 
\end{theorem}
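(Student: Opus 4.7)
The plan is to handle both parts by reducing to smooth connected commutative unipotent groups and then exploiting Rosengarten's notion of weakly permawound unipotent groups. By Chevalley's structure theorem together with the existence of Néron lft-models for abelian varieties and tori \cite[Chapter 10.2]{BLR}, any smooth algebraic group $G/K$ satisfying $\Hom_K(\Ga,G)=0$ reduces the question to the analogous one for a wound smooth connected commutative unipotent group $U$.

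For part (i), the argument proceeds as follows. Over each local ring $\Og_{S,s}$ a Néron lft-model $\mathscr{U}^{(s)}$ of $U$ exists by the local case of BLR. The global existence amounts to showing that these local models may be glued to a single smooth separated $S$-group scheme, which in turn reduces to producing a smooth separated model of $U$ over some dense open $V\subseteq S$ whose base change to $\Og_{S,s}$ coincides with $\mathscr{U}^{(s)}$ for every closed $s\in V$. The crux is thus the following uniform statement: for all but finitely many closed $s$, the Néron smoothening of a fixed auxiliary $S$-model of $U$ at $s$ is trivial. I would prove this by passing to weakly permawound subquotients in a dévissage of $U$ and applying, for such groups, a rigidity statement in which the perfectness of $\kappa(s)$ enters decisively to force triviality of the Néron smoothening outside a finite set. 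This is the content of the announced ``new and elementary proof''.

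For part (ii), the construction inverts this picture. When $[K:K^p]<\infty$ and the residue fields are imperfect, I would exhibit a weakly permawound smooth connected commutative unipotent group $G/K$ such that the Néron smoothening of every fixed auxiliary $S$-model of $G$ is non-trivial at infinitely many closed points $s\in S$; equivalently, the local Néron lft-models $\mathscr{G}^{(s)}$ disagree with the base change of any single $S$-extension of $G$ at infinitely many closed points. Concretely, $G$ would be produced as a quotient of a Weil-restriction-type vector group by a closed subgroup cut out by a form whose coefficients, read against the valuations of $S$, generate nonzero local contributions at every closed point; the hypothesis $[K:K^p]<\infty$ is what controls such forms globally, while the imperfection of $\kappa(s)$ is what makes the local obstruction non-vanishing. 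A no-global-model argument then follows from the standard observation that any putative smooth separated $S$-group scheme would agree, for almost all closed $s$, with the base change of its generic-fibre spread-out, contradicting the infinitude of the bad locus.

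The main obstacle in both parts is the cohomological and geometric control of the local Néron smoothenings of weakly permawound groups. In (i) one must prove that, under perfectness of residue fields, these smoothenings are trivial outside a finite set; in (ii) one must engineer the opposite behaviour while simultaneously preserving $\Hom_K(\Ga,G)=0$, which is the delicate verification because wound unipotent groups can easily acquire $\Ga$-subgroups under small perturbations. Rosengarten's weakly permawound framework is designed precisely to furnish both inputs and to provide a uniform conceptual account of the previously known Suzuki-Overkamp counterexamples recalled in the introduction.
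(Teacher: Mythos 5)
There is a genuine gap, and it sits at the heart of both halves of your argument: you take as your ``crux'' a local-to-global principle --- that a global N\'eron lft-model exists if and only if some fixed finite-type $S$-model agrees with the local N\'eron lft-models $\mathscr{U}^{(s)}$ (equivalently, has trivial N\'eron smoothening) at all but finitely many closed points --- and this principle is false. Oesterl\'e's example $\Res_{K'/K}\Gm/\Gm$ (for $K'/K$ finite purely inseparable, recalled in the paper's introduction) is a wound unipotent group over a global base with perfect residue fields whose local N\'eron lft-models have non-trivial component groups at \emph{every} closed point; consequently no finite-type model can coincide with $\mathscr{U}^{(s)}$ at almost all $s$, and yet the group does admit a (non-quasi-compact) N\'eron lft-model. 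Thus in part (i) the uniform statement you propose to prove fails already for the basic building block $\mathbf{G}_{\mathrm{m}}^{(1/p)}/\Gm$, and in part (ii) your ``no-global-model argument'' --- deducing non-existence from the infinitude of the bad locus --- would equally ``prove'' that Oesterl\'e's group has no N\'eron lft-model. The correct local-to-global criterion (BLR, Ch.\ 10.1, Prop.\ 9, and Proposition \ref{IsNmprop}) only requires the finite-type model to capture the identity components (equivalently, the Lie algebras) of the local models at almost all points; the component groups may be non-trivial everywhere, subject only to being finitely generated.

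That finite-generation constraint is exactly where the paper's actual obstruction lives, and it is absent from your proposal. For part (ii) the paper shows, via Shiho's cohomological purity theorem, that the quotient of a putative N\'eron lft-model of $\boldsymbol{\nu}_1(q)_K$ (for $2\le q\le r$) by the open subgroup scheme $\boldsymbol{\nu}_1(q)_S$ is an \'etale algebraic space whose stalk at a closed point $x$ is $\boldsymbol{\nu}_1(q-1)_{\kappa(x)}(\kappa(x)^{\mathrm{sep}})$, the separable points of a \emph{positive-dimensional} group over the imperfect residue field, hence not finitely generated --- contradicting the Halle--Nicaise theorem on component groups. Your proposal never produces such a quantitative contradiction; ``non-trivial smoothening at infinitely many points'' is not enough, and your verification that $\Hom_K(\Ga,G)=0$ is moot until the obstruction itself is repaired. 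For part (i), your reduction to the unipotent case also skips the non-smooth affine part $L/L^{\natural}$ arising from Chevalley's theorem (handled in the paper via Proposition \ref{Nnonsmprop} using that totally non-smooth groups are killed by $(-)^{(1/p^n)}$ for $n\gg 0$), and the endgame is not a rigidity statement about smoothenings but an explicit identification: after embedding each $p$-torsion wound subquotient into a weakly permawound group (Rosengarten's embedding theorem) and filtering the latter by copies of $\boldsymbol{\nu}_1(1)_K\cong\mathbf{G}_{\mathrm{m}}^{(1/p)}/\Gm$ and infinitesimal groups, everything reduces to the explicit N\'eron lft-model $\mathscr{G}_{\mathrm{m}}^{(1/p)}/\mathscr{G}_{\mathrm{m}}$.
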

The purpose of the present article is to give a new proof of this result (see Theorems \ref{Nétnonexthm} and \ref{ExistenceThm}). The main innovations are the following:
\begin{itemize}
\item Our construction of counterexamples in the case of imperfect residue fields is new. While in \cite{OS}, one counterexample was given for each eligible $S$ by means of explicit equations, we show that they naturally come in infinite families and are ubiquitous; if $[K:K^p] > p^2,$ this includes infinite families of new counterexamples. The new construction relies on a cohomological purity theorem of Shiho \cite{Shi} and shows that the obstruction to the existence of Néron lft-models lives in certain local cohomology sheaves.
\item Our new proof of Conjecture I in the case of perfect residue fields is elementary and entirely bypasses the advanced machinery of duality on relatively perfect sites used in \cite{OS}. Instead, we rely on recent results due to Z. Rosengarten \cite{Ros}, which are highly non-trivial but also derived in an elementary way. 
\end{itemize}

Conjecture II is known to follow from resolution of singularities in positive characteristic \cite[Chapter 10.3, Theorem 5 (a)]{BLR}, and is also known if the residue fields of $S$ are perfect \cite{OS}. This conjecture implies, for example, that pseudo-Abelian varieties \cite[Definition 0.1, Theorem 2.1]{Totaro} over $K$ admit Néron models over a global base as soon as $[K:K^p]< \infty.$ We shall give an elementary proof of this fact (which is also implicit but hidden in \cite{OS}).
\subsection{Notation}
Throughout this article, a \it group scheme \rm will mean a group object in the category of schemes over some base $S,$ and an \it algebraic group \rm will mean a group scheme of finite type over a field. All group schemes considered in this article will be commutative, and we shall henceforth use the terms \it group scheme, algebraic group, \rm etc. in this sense. \\
\\
$\mathbf{Acknowledgement}.$ The author is grateful to Professor K. Rülling for answering his questions on de Rham-Witt complexes. This research was undertaken in the framework of the research training group \it GRK 2240: Algebro-Geometric Methods in Algebra, Arithmetic and Topology. \rm
\section{Relative perfection and duality}

\subsection{Relative perfection}
In this paragraph we shall describe the theory of relatively perfect schemes, as well as the duality theory of relatively perfect unipotent groups over fields, developed by Kato and Suzuki \cite{Kato, Suz}, to the extent necessary in this article. Let $p$ be a prime number and let $S$ be a regular excellent scheme over $\F_p.$ Assume moreover that $\Omega^1_{S}:=\Omega^1_{S/\F_p}$ is locally free of some constant finite rank $r;$ this means in particular that the absolute Frobenius $F_S \colon S\to S$ is locally free of rank $p^r$ \cite[Tag 07P2]{Stacks}. For a scheme $X\to S,$ we shall write $X^{(p^n)}$ for the fibre product of $X\to S$ along the $n$-fold Frobenius $F^{n}_{S} \colon S \to S$ and $X^{(1/p^n)}$ for the Weil restriction of $X\to S$ along $F_S^n.$ This Weil restriction exists because $F_S^n$ is a finite and locally free universal homeomorphism. Recall that we have a canonical $S$-morphism $X \to X^{(p)}$ called the \it relative Frobenius \rm of $X$. Following Kato \cite{Kato}, we shall call $X\to S$ \it relatively perfect \rm if its relative Frobenius is an isomorphism. Let $\mathbf{Sch}_S$ be the category of all $S$-schemes and $\mathbf{Sch}_S^{\RP}$ the full subcategory of $\mathbf{Sch}_S$ whose objets are relatively perfect over $S.$
Then we have the following 
\begin{theorem} \rm (K. Kato) \it
The forgetful functor $\mathbf{Sch}_S^{\RP} \to \mathbf{Sch}_S$ has a right adjoint $$(-)^{\RP} \colon \mathbf{Sch}_S \to \mathbf{Sch}_S^{\RP}.$$
\end{theorem}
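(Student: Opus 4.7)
The plan is to construct the right adjoint $(-)^{\RP}$ explicitly as an inverse limit inside $\mathbf{Sch}_S$, exploiting the adjunction between base change $(-)^{(p)}$ and Weil restriction $(-)^{(1/p)}$ along the finite locally free morphism $F_S$. Since $(-)^{(1/p)}$ is a right adjoint it preserves all limits, and this is what will make the construction stabilise upon iteration.

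First I would construct a natural $S$-morphism $\beta_X \colon X^{(1/p)} \to X$ for every $X \in \mathbf{Sch}_S$: on $T$-points it sends $\phi \in X^{(1/p)}(T) = \Hom_S(T^{(p)}, X)$ to $\phi \circ F_{T/S} \in X(T)$, where $F_{T/S} \colon T \to T^{(p)}$ is the relative Frobenius. Equivalently, $\beta_X$ is the composite of the counit $(X^{(1/p)})^{(p)} \to X$ of the adjunction with $F_{X^{(1/p)}/S} \colon X^{(1/p)} \to (X^{(1/p)})^{(p)}$. Iterating $(-)^{(1/p^n)}$ on $\beta_X$ yields a tower
\[\cdots \to X^{(1/p^{n+1})} \to X^{(1/p^n)} \to \cdots \to X^{(1/p)} \to X,\]
and I define $X^{\RP} := \varprojlim_{n \ge 0} X^{(1/p^n)}$.

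The principal technical step is to show that this inverse limit exists in $\mathbf{Sch}_S$. When $X$ is affine over $S$, Weil restriction along the finite locally free $F_S$ preserves affineness, so each $X^{(1/p^n)}$ is $S$-affine, every transition morphism is affine, and the limit exists as an affine $S$-scheme. For general $X$, I would glue from an affine open cover, using that $(-)^{(1/p)}$ is compatible with open immersions (the fact that $F_S$ is a universal homeomorphism is crucial here). This is the main obstacle; the remainder is a formal consequence of the Weil restriction adjunction. Once this is in place, $X^{\RP}$ is relatively perfect: since right adjoints preserve inverse limits, $(X^{\RP})^{(1/p)} = \varprojlim_n X^{(1/p^{n+1})}$, which is canonically identified with $X^{\RP}$ by cofinality of the shifted tower, and a check on the transition maps shows this identification is compatible with $F_{X^{\RP}/S}$.

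Finally, for the universal property, let $Y \in \mathbf{Sch}_S^{\RP}$. The transition map in the Hom-tower unfolds via the Weil restriction adjunction as
\[\Hom_S(Y, X^{(1/p^{n+1})}) = \Hom_S(Y^{(p^{n+1})}, X) \to \Hom_S(Y^{(p^n)}, X) = \Hom_S(Y, X^{(1/p^n)}),\]
and this map is precomposition with $F_{Y^{(p^n)}/S} \colon Y^{(p^n)} \to Y^{(p^{n+1})}$. Because $Y$ is relatively perfect, every iterate of its relative Frobenius is an isomorphism, so the tower of Hom-sets is essentially constant with value $\Hom_S(Y, X)$. It follows that $\Hom_S(Y, X^{\RP}) = \varprojlim_n \Hom_S(Y, X^{(1/p^n)}) = \Hom_S(Y, X)$, which is exactly the desired adjunction.
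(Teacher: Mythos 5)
Your construction is exactly the one the paper recalls from Kato's \S 1 (your $\beta_X$ is the paper's $g_{X/S}$, namely the relative Frobenius followed by the adjunction counit, and $X^{\RP}$ is the same inverse limit along affine transition maps, with the universal property verified by the same unwinding of the Weil-restriction adjunction on Hom-towers), so the proposal is correct and follows the paper's approach. The one step you state a little loosely is the relative perfection of $X^{\RP}$: an identification $(X^{\RP})^{(1/p)}\cong X^{\RP}$ is not literally the assertion that $F_{X^{\RP}/S}\colon X^{\RP}\to (X^{\RP})^{(p)}$ is an isomorphism, and the cleanest way to close this (used by the paper itself in the proof of Proposition \ref{gmredprop}) is to observe that the adjunction counits $(X^{(1/p^{n+1})})^{(p)}\to X^{(1/p^n)}$ pass to the limit and furnish an explicit inverse of the relative Frobenius of $X^{\RP}$.
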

The \it proof, \rm as well as the construction of the right adjoint which we shall now briefly describe, can be found in \cite[\S 1]{Kato}. For each scheme $X\to S,$ we have a canonical morphism $g_{X/S} \colon X^{(1/p)} \to X$ which is the composition of the relative Frobenius $X^{(1/p)} \to (X^{(1/p)})^{(p)}$ with the adjunction morphism $(X^{(1/p)})^{(p)} \to X.$ This map turns out to be affine \cite[p. 130]{Kato}, and we have a canonical isomorphism 
$$X^{\RP} = \varprojlim ( X \leftarrow X^{(1/p)} \leftarrow X^{(1/p^2)} \leftarrow ...). $$
The canonical morphism $X^{\RP} \to X$ is an isomorphism if and only if $X$ is relatively perfect over $S;$ if $X\to S$ is locally of finite presentation, this happens if and only if $X$ is étale over $S$ \cite[Lemma 1.3]{Kato}. If $X$ is a group scheme over $S$ then so is $X^{\RP}$ and the map $X^{\RP} \to X$ is an homomorphism. If $X$ is quasi-compact and quasi-separated, and $Y$ is any scheme locally of finite presentation over $S,$ we have a canonical bijection
$$\Mor_S(X^{\RP}, Y^{\RP}) = \varinjlim \Mor_S(X^{(1/p^n)}, Y);$$ if $X$ and $Y$ are group schemes over $S,$ we similarly have $$\Hom_S(X^{\RP}, Y^{\RP}) = \varinjlim \Hom_S(X^{(1/p^n)}, Y).$$ This follows from the universal property of the relative perfection together with \cite[Tag 01ZC]{Stacks}. 

\begin{definition} \rm (Cf. \cite[Definition 8.1]{BS}, \cite[p. 5]{OS}) \it A morphism $X\to S$ is\\
(i) \rm locally relatively perfectly of finite presentation \it if $X$ admits an open covering by schemes of the form $U^{\RP},$ where $U\to S$ is of finite presentation,\\
(ii) \rm relatively perfectly of finite presentation \it if it is locally relatively perfectly of finite presentation, quasi-compact, and quasi-separated, and\\
(iii) \rm relatively perfectly smooth \it if $X$ admits an open covering by schemes of the form $U^{\RP},$ where $U\to S$ is smooth.
\end{definition}

We shall denote by $S_{\RP}$ the site whose underlying category is $\mathbf{Sch}_S^{\RP}$ endowed with the étale topology, and by $S_{\RPS}$ the full subcategory of $\mathbf{Sch}_S^{\RP}$ consisting of relatively perfectly smooth $S$-schemes, also endowed with the étale topology. The former site already appears in \cite{Kato}, and both are used in \cite{OS}. Throughout, we shall denote by $\Sh(\mathcal{S})$ (resp. $\Ab(\mathcal{S})$) the categories of sheaves of sets (resp. sheaves of Abelian groups) on a site $\mathcal{S}.$ 

We remark that the site $S_{\RP}$ is functorial in $S$ (cf. \cite[§ 5]{Kato}); the same is not generally true for $S_{\RPS}$ because for a pair of morphisms $(S'\to S, X\to S)$ with $X\to S$ relatively perfectly smooth, the same need not be true for $X\times_SS' \to S'.$ However, if the morphism $S'\to S$ is relatively perfect as well (for example, weakly étale), then we do obtain an associated morphism of sites $S'_{\RPS} \to S_{\RPS};$ this follows from \cite[Corollary 1.9]{Kato}.

Finally, let $\mathbf{Sch}_{S, \et}$ and $\mathbf{Sch}_{S, \fppf}$ be the sites consisting of all $S$-schemes, endowed with the étale and fppf-topology, respectively. We have canonical morphisms of sites $\epsilon \colon \mathbf{Sch}_{S, \fppf} \to \mathbf{Sch}_{S, \et}$ and $\iota \colon \mathbf{Sch}_{S, \et} \to S_{\RP}.$ Let $\boldsymbol{\rho}:= \iota \circ \epsilon.$ Note that, if $G$ is a group scheme over $S,$ then so is $G^{(1/p)}$ and the map $g_{G/S} \colon G^{(1/p)} \to G$ is a morphism of group schemes. Let $N:=\ker g_{G/S}.$ 

\begin{proposition} (i) We have $\boldsymbol{\rho}_\ast G = G^{\RP},$ and\\
(ii) if $G$ is smooth over $S,$ then $\R^ i \boldsymbol{\rho}_\ast G =0$ for all $i > 0$ and $\R^ i \boldsymbol{\rho}_\ast N =0$ for $i\geq 0.$ \label{RPRprop}
\end{proposition}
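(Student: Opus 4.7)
The proposition naturally splits into three pieces: the identification in (i), the vanishing in (ii) for smooth $G$, and the vanishing in (ii) for $N$. Part (i) is a direct adjunction computation: for a relatively perfect test object $X$, one has $(\boldsymbol{\rho}_\ast G)(X) = G(X) = \Mor_S(X, G)$, and since $X \in \mathbf{Sch}_S^{\RP}$, the universal property defining $(-)^{\RP}$ as right adjoint to the forgetful functor rewrites this as $\Mor_S(X, G^{\RP}) = G^{\RP}(X)$. So the two sheaves on $S_{\RP}$ agree tautologically.

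For (ii) with $G$ smooth, the plan is to first use Grothendieck's comparison $H^i_{\fppf}(-,G) = H^i_{\et}(-,G)$ for smooth coefficients to absorb the fppf-to-étale factor $\epsilon$ of $\boldsymbol{\rho} = \iota\circ\epsilon$; this reduces $R^i\boldsymbol{\rho}_\ast G$ to the étale sheafification on $S_{\RP}$ of the presheaf $X \mapsto H^i_{\et}(X,G)$. The crucial observation is then that any étale cover $\{X_j\to X\}$ of an object $X\in S_{\RP}$ already lies in $S_{\RP}$: étale morphisms are relatively perfect, and composition of relatively perfect morphisms is relatively perfect, so each $X_j$ is relatively perfect over $S$. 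Hence every étale cohomology class on such an $X$ is trivialized by a covering in $S_{\RP}$, and the sheafification vanishes.

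For $N$, I would start from the fppf-exact sequence $0\to N \to G^{(1/p)} \xrightarrow{g_{G/S}} G \to 0$, noting that $G^{(1/p)}$ is smooth (as the Weil restriction of a smooth scheme along the finite locally free $F_S$) and $g_{G/S}$ is fppf-surjective with finite infinitesimal kernel $N$. Applying $R\boldsymbol{\rho}_\ast$ and using the vanishing just established for both $G$ and $G^{(1/p)}$, the long exact sequence collapses to
$$0 \to \boldsymbol{\rho}_\ast N \to \boldsymbol{\rho}_\ast G^{(1/p)} \xrightarrow{\boldsymbol{\rho}_\ast g_{G/S}} \boldsymbol{\rho}_\ast G \to R^1\boldsymbol{\rho}_\ast N \to 0,$$
with $R^i\boldsymbol{\rho}_\ast N = 0$ automatic for $i\geq 2$. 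By (i) the middle arrow becomes the natural map $(G^{(1/p)})^{\RP} \to G^{\RP}$, and this is an isomorphism: for $X\in S_{\RP}$ the relative Frobenius of $X$ is an isomorphism, so $G^{(1/p)}(X) = G(X^{(p)}) = G(X)$. This forces $\boldsymbol{\rho}_\ast N = R^1\boldsymbol{\rho}_\ast N = 0$. The only substantive non-formal inputs are the fppf-surjectivity of $g_{G/S}$ and the smoothness of $G^{(1/p)}$ — both standard Weil-restriction-along-Frobenius facts — together with Grothendieck's fppf/étale comparison for smooth groups; everything else is pure site-theoretic formalism.
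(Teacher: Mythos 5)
Your proof is correct and follows essentially the same route as the paper: (i) by the adjunction defining $(-)^{\RP}$, the vanishing for smooth $G$ via Grothendieck's fppf/\'etale comparison combined with the exactness of restriction to $S_{\RP}$ (which you verify directly from the fact that \'etale covers of relatively perfect schemes are relatively perfect, where the paper cites this exactness from the literature), and the vanishing for $N$ via the long exact sequence for $0\to N\to G^{(1/p)}\to G\to 0$ together with the isomorphism $\boldsymbol{\rho}_\ast G^{(1/p)}\cong\boldsymbol{\rho}_\ast G$. No substantive differences.
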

\begin{proof}
Claim (i) is true by definition. Since the functor $\epsilon_\ast -$ sends injective sheaves to injective sheaves, we have a Grothendieck spectral sequence
$$\R^ i \iota_\ast \R^j \epsilon_\ast G \Rightarrow \R^{i+j} \boldsymbol{\rho}_\ast G.$$
However, since $\iota_\ast -$ is exact (cf. \cite[p. 29]{BS}), this spectral sequence degenerates and yields isomorphisms 
$\iota_\ast \R^i \epsilon_\ast G = \R^i \boldsymbol{\rho}_{\ast} G $ for all $i \geq 0.$ If $G$ is smooth over $S,$ then $\R^ i \epsilon _\ast G = 0 $ for $i > 0$ by a theorem of Grothendieck (see \cite[Théorème 11.7]{Dix} and \cite[Chapter III, Remark 1.17]{Milne}), so the first part of (ii) follows. For the second part, observe that the map $\R ^i \boldsymbol{\rho}_\ast G^{(1/p)} \to \R ^i \boldsymbol{\rho}_\ast G$ is an isomorphism for all $i \ge 0,$ so the long exact cohomology sequence shows the vanishing of $\R^i \boldsymbol{\rho}_\ast N$ for $ i \geq 0.$ 
\end{proof}
\subsection{Duality}
Once again we let $S$ be a regular excellent scheme over $\F_p$ such that $\Omega^1_{S}$ is locally free of rank $r\in \N.$ Let $X$ be a scheme over $S.$ For each $n\in \N,$ we have the \it scheme of Witt vectors of length \rm $n,$ denoted by $W_n X,$ and a $\Z$-differential graded algebra of coherent $\Og_{W_nX}$-modules $W_n \Omega^\bullet_{X},$ whose differential we shall call $\rmd \, ,$ and whose product $W_n \Omega^i_X \otimes_{\Z} W_n \Omega ^j _ X \to W_n \Omega^{i+j} _X$ we shall denote by $\wedge.$ We have $W_1 \Omega^\bullet_X = \Omega^\bullet_X$ and $W_n \Omega^0_X=W_n\Og_X = \Og_{W_nX}.$ We refer the reader to \cite{Ill} for the construction of these objects. The assignment $E \mapsto W_n \Omega^q_E$ for $E$ étale over $X$ defines a sheaf on the small étale site of $X;$ patching all these sheaves together defines a sheaf on the big étale site of $X.$ We shall only be interested in those sheaves restricted either to the small étale site of $S$ or the site $S_{\RP}.$ Moreover, for a local section $x$ of $\Og_X,$ we shall denote by $\underline{x}$ the Teichmüller representative\footnote{Referred to as \it représentant multiplicatif \rm in \cite{Ill}.} of $x$ in $W_n\Og_X$ \cite[p. 505]{Ill}. For any $S$-scheme $X,$ we obtain a morphism $\mathbf{G}_{\mathrm{m}}^{\otimes q} \to W_n \Omega^q_X$ on the big étale site of $X,$ given by 
$$s_1 \otimes ... \otimes s_q \mapsto \rmd\log \underline{s_1} \wedge ... \wedge \rmd\log \underline{s_q}.$$ Note that this expression makes sense because the map $s \mapsto \rmd\log\underline{s} := \underline{s}^{-1}\rmd \underline{s}$ defines an homomorphism $\Gm \to W_n\Omega^1_X.$ We shall denote the restriction of its image to $S_{\RP}$ by $\nu_n(q)_S.$

We observe that the direct sum $\nu_n(0) \oplus ... \oplus \nu_n(r)$ naturally carries the structure of a sheaf of graded-commutative algebras on $S_{\RP}.$ Indeed, this direct sum is naturally contained in $\oplus_q W_n \Omega^q_S$ and is closed under the operation $(x,y) \mapsto x \wedge y.$ 

Following Kato \cite[Definition 4.2.3]{Kato}, we let $D(S_{\RP}, \Z/p^n\Z)$ be the derived category of sheaves of $\Z/p^n\Z$-modules on $S_{\RP}$ and $D_0(S_{\RP}, \Z/p^n\Z)$ be the smallest triangulated subcategory of $D^b(S_{\RP}, \Z/p^n\Z)$ which contains sheaves of the form $M^{\RP},$ where $M$ is a finite locally free $\Og_S$-module. A central tool we shall use is the following result, due to Kato \cite[Definition 4.2.3, Theorem 4.3]{Kato}.
\begin{theorem}
The sheaves $\nu_n(q)$ are contained in $D_0(S_{\RP}, \Z/p^n\Z).$ Moreover, the functor $\R\sHom(-, \nu_n(r)_S)$ maps $D_0(S_{\RP}, \Z/p^n\Z)$ into itself. Finally, for any object $E$ of $D_0(S_{\RP}, \Z/p^n\Z),$ the canonical morphism 
$$E \to \R\sHom(\R\sHom(E, \nu_n(r)), \nu_n(r))$$ is an isomorphism. \label{Katoequivthm}
\end{theorem}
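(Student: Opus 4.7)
The plan is to establish the three assertions in turn, exploiting the triangulated structure of $D_0(S_{\RP}, \Z/p^n\Z)$: by definition this category is generated (as a triangulated subcategory) by the objects of the form $M^{\RP}$ for $M$ a finite locally free $\Og_S$-module, so each assertion can ultimately be checked on such generators via devissage.

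For the membership $\nu_n(q)_S \in D_0$, I would induct on $n$. For the base case $n=1$, the sheaf $\nu_1(q)_S$ fits into the Cartier-type exact sequence
\begin{equation*}
0 \to \nu_1(q)_S \to \Omega^q_S \xrightarrow{C^{-1} - 1} \Omega^q_S/\rmd \Omega^{q-1}_S \to 0
\end{equation*}
on $S_{\RP}$, where the inverse Cartier operator $C^{-1}$ is well-defined after pullback to relatively perfect schemes. Both outer terms come from finite locally free $\Og_S$-modules via $(-)^{\RP}$ (with a further short exact sequence needed to handle the quotient by $\rmd \Omega^{q-1}_S$), so $\nu_1(q)_S$ lies in $D_0$. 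For the inductive step, I would use the short exact sequence
\begin{equation*}
0 \to \nu_{n-1}(q)_S \xrightarrow{V} \nu_n(q)_S \to \nu_1(q)_S \to 0
\end{equation*}
coming from the Verschiebung on the de Rham-Witt complex, which combined with the inductive hypothesis places $\nu_n(q)_S$ in $D_0$.

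For the second claim, devissage again reduces the problem to computing $\R\sHom(M^{\RP}, \nu_n(r)_S)$ for $M$ finite locally free. By tensor-hom adjunction this should identify with $(M^\vee)^{\RP} \otimes^{\mathbf{L}}_{\Og_S^{\RP}} \nu_n(r)_S$, which is again in $D_0$ by the first assertion (tensoring with a finite locally free relatively perfect module preserves $D_0$).

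The main obstacle, as always in such duality theorems, is the biduality. Triangulated devissage reduces it to verifying the isomorphism $E \to \R\sHom(\R\sHom(E, \nu_n(r)_S), \nu_n(r)_S)$ in the single case $E = \Og_S^{\RP}$, where it becomes the statement that $\Og_S^{\RP} \to \R\sHom(\nu_n(r)_S, \nu_n(r)_S)$ is an isomorphism. This is a genuinely non-trivial local duality encoding that $\nu_n(r)_S$ plays the role of a dualizing object for the relatively perfect site of an $r$-dimensional base. I would prove it by reducing to strictly Henselian $S$ and performing an explicit local calculation with Teichmüller representatives and a residue pairing in the logarithmic de Rham-Witt complex. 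Since this technical heart of the argument was carried out in detail by Kato, I would appeal to \cite[\S 4]{Kato} for this step rather than reproducing the computation.
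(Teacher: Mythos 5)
The paper does not prove this statement: it is quoted verbatim as a theorem of Kato with the citation \cite[Definition 4.2.3, Theorem 4.3]{Kato}, so your sketch is really being measured against Kato's argument. The first part of your plan is essentially sound: reducing $\nu_n(q)\in D_0$ to $n=1$ via the standard filtration of $W_n\Omega^q_{\log}$ with graded pieces $\nu_1(q)$, and then using a Cartier-type exact sequence whose outer terms are relative perfections of locally free modules, is indeed how this goes (one quibble: the map $\nu_{n-1}(q)\to\nu_n(q)$ is $\underline{p}$, not the Verschiebung $V$, which does not preserve logarithmic forms; and local freeness of the relevant cycle/boundary sheaves on a merely regular base needs Shiho's argument, cf.\ Lemma \ref{locfreelem}).

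The genuine gap is in your second step. The functor $\R\sHom(-,\nu_n(r)_S)$ is the derived internal Hom of sheaves of $\Z/p^n\Z$-modules on $S_{\RP}$, not of $\Og_S$-modules, so there is no tensor-hom adjunction identifying $\R\sHom(M^{\RP},\nu_n(r)_S)$ with $(M^\vee)^{\RP}\otimes^{\mathbf{L}}\nu_n(r)_S$: additive maps between these sheaves are far from being $\Og_S$-linear. Concretely, for $M=\Og_S$ and $n=1$ your formula would give $\sHom(\mathbf{G}_{\mathrm{a}}^{\RP},\nu_1(r)_S)\cong\nu_1(r)_S$ in degree $0$; but $\nu_1(r)$ is \emph{wound} unipotent (as the paper itself records in the duality-over-a-field subsection), so $\sHom(\mathbf{G}_{\mathrm{a}}^{\RP},\nu_1(r))=0$ and the dual of $\mathbf{G}_{\mathrm{a}}^{\RP}$ is in fact concentrated in degree $1$. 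This error propagates into your reduction of the biduality: the statement to verify on the generator is not that $\Og_S^{\RP}\to\R\sHom(\nu_n(r),\nu_n(r))$ is an isomorphism, but rather the biduality for $\mathbf{G}_{\mathrm{a}}^{\RP}$ against its shifted, genuinely computed dual. The nontrivial content of Kato's theorem is precisely the computation of the sheaves $\sExt^i(\mathbf{G}_{\mathrm{a}}^{\RP},\nu_n(r)_S)$ via de Rham--Witt resolutions, and it cannot be bypassed by a linear-algebra adjunction; deferring to \cite[\S 4]{Kato} is fine, but the intermediate statement you propose to defer is false as stated.
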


For later use, we shall now construct smooth $S$-group schemes $\boldsymbol{\nu}_1(0)_S, ..., \boldsymbol{\nu}_1(r)_S$ such that $\boldsymbol{\nu}_1(0)_S^{\RP}, ..., \boldsymbol{\nu}_1(r)_S^{\RP}$ represent the sheaves $\nu_1(0)_S,...,\nu_q(r)_S$ on $S_{\RP}.$ Here we shall restrict to the case $n=1;$ it seems likely that this can be generalised to arbitrary $n,$ but this will not be necessary for the arguments which follow. For a cochain complex $A^\bullet$ of sheaves on some site $\mathcal{S},$ we shall denote by $B^q A^\bullet$ and $Z^q A^\bullet$ the sheaves of coboundaries and cocycles contained in $A^q$ for all $q\in \Z.$ Let $F:=F_S$ be the absolute Frobenius of $S.$ Recall from \cite[p. 574]{Shi} that we have the Cartier isomorphisms
$$C: H^q(F_\ast \Omega^\bullet_S) \to \Omega^q _S$$ of $\Og_S$-modules for all $q\geq 0.$ 
\begin{lemma} \label{locfreelem}
For all $q\geq 0,$ the coherent $\Og_S$-modules $Z^qF_\ast \Omega_S^\bullet$ and $B^qF_\ast \Omega^\bullet_S$ are locally free.
\end{lemma}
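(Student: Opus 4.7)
The plan is to prove both local freeness statements simultaneously by downward induction on $q$, playing off two short exact sequences of coherent $\Og_S$-modules against each other.

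First I would record the basic linear-algebraic input. Since $\Omega^1_S$ is locally free of rank $r$, each $\Omega^q_S$ is locally free of rank $\binom{r}{q}$ (and vanishes for $q>r$), and since $F = F_S$ is finite locally free of rank $p^r$, the pushforward $F_*\Omega^q_S$ is locally free of rank $p^r\binom{r}{q}$ as an $\Og_S$-module (through the Frobenius-twisted action $a\cdot \omega := a^p\omega$). Crucially, the differential $d\colon F_*\Omega^q_S\to F_*\Omega^{q+1}_S$ is $\Og_S$-linear with respect to this twisted structure, thanks to the identity $d(a^p\omega) = a^p\, d\omega$ available in characteristic $p$. Consequently I obtain two short exact sequences of $\Og_S$-modules,
$$0 \to Z^q F_*\Omega^\bullet_S \to F_*\Omega^q_S \xrightarrow{d} B^{q+1}F_*\Omega^\bullet_S \to 0,$$
and, via the Cartier isomorphism $C$,
$$0 \to B^q F_*\Omega^\bullet_S \to Z^q F_*\Omega^\bullet_S \to \Omega^q_S \to 0.$$

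I would then proceed by downward induction on $q$. The base case $q>r$ is trivial, as all sheaves in sight vanish. For the inductive step, assuming $B^{q+1}F_*\Omega^\bullet_S$ is locally free of finite rank, the first sequence exhibits $Z^qF_*\Omega^\bullet_S$ as the kernel of a surjection between locally free coherent sheaves on the regular (hence locally Noetherian) scheme $S$; such a kernel is itself locally free of finite rank (by lifting a local basis of the quotient to split the surjection locally, or equivalently because the kernel is coherent and flat). Feeding this back into the Cartier sequence, where now both $Z^qF_*\Omega^\bullet_S$ and $\Omega^q_S$ are locally free, yields that $B^qF_*\Omega^\bullet_S$ is locally free by the same reasoning, which closes the induction.

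There is no substantial obstacle to this argument. The only points that really require verification are that $d$ becomes $\Og_S$-linear after the Frobenius twist (automatic in characteristic $p$) and that the Cartier isomorphism from the excerpt upgrades to the displayed short exact sequence of $\Og_S$-modules, both of which follow directly from the setup; the remainder is a formal manipulation of short exact sequences of locally free coherent sheaves. I note in passing that an \emph{ascending} induction starting from $Z^0=\Og_S$ does not close as cleanly, since presenting $B^{q+1}$ as a quotient $F_*\Omega^q_S/Z^q$ of locally free sheaves does not by itself yield local freeness -- this is why the downward direction is the one to pursue.
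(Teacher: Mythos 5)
Your argument is correct, but it takes a genuinely different route from the paper's. The paper deduces the lemma from the smooth case (Illusie, Prop.\ 2.2.8(a)) by transporting local freeness along Shiho's flat relatively perfect morphisms $\Spec \widehat{\Og}_{S,s} \to \mathbf{A}^m_{\F_p}$ and $\Spec \widehat{\Og}_{S,s} \to S$, which commute with $F_\ast-$ and with forming $\Omega^\bullet$; you instead work directly on $S$ by descending induction, splicing the tautological sequence $0 \to Z^qF_\ast\Omega^\bullet_S \to F_\ast\Omega^q_S \to B^{q+1}F_\ast\Omega^\bullet_S \to 0$ against the Cartier sequence $0 \to B^qF_\ast\Omega^\bullet_S \to Z^qF_\ast\Omega^\bullet_S \to \Omega^q_S \to 0$, using that the kernel of a surjection of finite locally free sheaves is locally free (local splitting). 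All your inputs are legitimately available at that point in the text: the finiteness and local freeness of $F_S$ of rank $p^r$ and the Cartier isomorphism for $S$ are both recalled immediately beforehand, and the $\Og_S$-linearity of $d$ after Frobenius pushforward is exactly what makes $Z^q$ and $B^q$ coherent in the first place. The one thing to be aware of is that the real content of your proof is concentrated in the Cartier isomorphism for the regular (not necessarily $\F_p$-smooth) scheme $S$, which is itself a nontrivial theorem of Shiho of essentially the same depth as the reduction the paper performs; so the two proofs are comparable in strength, with the heavy citation simply placed differently. A modest bonus of your route is that it produces, as a byproduct, the rank bookkeeping $b_{q+1}+z_q=\binom{r}{q}p^r$ and $z_q-b_q=\binom{r}{q}$ that the paper re-derives separately in the proof of Proposition \ref{dimprop}, and your closing observation about why the ascending induction does not close is apt.
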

\begin{proof}
This is well-known if $S$ is smooth over $\F_p$ \cite[Proposition 2.2.8 (a)]{Ill}. To deduce the general case, we use an argument due to Shiho: by \cite[Remark 2.16]{Shi}, we can find, for any $s \in S,$ a flat relatively perfect morphism $\pi \colon \widehat{\Og}_{S,s} \to \mathbf{A}^m_{\F_p}$ for some $m\in \N_0;$ here we use that $S$ is contained in the \it category $\mathcal{C}$ \rm defined in \cite{Shi}; cf. \it op. cit., \rm Proposition 2.22. Moreover, we observe that the morphism $\xi \colon \Spec \widehat{\Og}_{S,s} \to S$ is relatively perfect; this follows from \cite[Proposition 2.15 (2)]{Shi} together with \cite[Tag 0F6W]{Stacks}. Since both $\pi$ and $\xi$ are flat, we obtain
\begin{align*}
\pi^\ast Z^q F_\ast \Omega^\bullet_S &= Z^q F_\ast \Omega^\bullet_{\Spec \widehat{\Og}_{S,s}} \\ & = \xi^\ast Z^q F_\ast \Omega^\bullet_{\mathbf{A}^m_{\F_p}}.
\end{align*}
Here the letter $F$ stands for the absolute Frobenius of the various relevant schemes; the functors $\pi^\ast-$ and $\xi^\ast-$ commute with $F_\ast-$ as well as with forming $\Omega^\bullet_{-}$ because $\pi$ and $\xi$ are relatively perfect. Because $s \in S$ was chosen arbitrarily, this implies the claim.
\end{proof}

We can now construct smooth group schemes $\boldsymbol{\nu}_1(0)_S,..., \boldsymbol{\nu}_1(r)_S$ over $S$ which, after relative perfection, represent the sheaves $\nu_1(0)_S,..., \nu_1(r)_S.$ Recall that, for a locally free coherent sheaf $\mathscr{E}$ on $S,$ the functor on $\mathbf{Sch}_S$ given by
$$(y \colon Y\to S) \mapsto \Gamma(Y, y^\ast \mathscr{E})$$ is representable by the smooth separated $S$-group scheme $\boldsymbol{\Spec} \, \mathrm{Sym} \, \mathscr{E}^\vee,$ where, as usual, $\mathscr{E}^\vee:= \sHom_{\Og_S}(\mathscr{E}, \Og_S).$ We shall denote this group scheme by $\underline{\mathscr{E}}.$ Observe that we have a canonical isomorphism 
$$\underline{F_{S, \ast} \mathscr{E}} = \underline{\mathscr{E}}^{(1/p)}.$$ Define smooth $S$-group schemes $\boldsymbol{Z}_S^q:=\underline{Z^q F_\ast \Omega_S^\bullet},$ and $\boldsymbol{\Omega}_S^q:=\underline{\Omega_S^q}.$ We now construct a morphism
$W ^ \ast \colon  \boldsymbol{Z}_S^q \to \boldsymbol{\Omega}_S^q$ as the composition of the natural closed immersion $\boldsymbol{Z}_S^q \subseteq \underline{F_\ast \Omega^q_S}$ and the canonical morphism 
$$g_{\boldsymbol{\Omega}^q_S/S} \colon \underline{F_\ast \Omega^q_S} = (\boldsymbol{\Omega}^q_S)^{(1/p)} \to \boldsymbol{\Omega}^q_S,$$ and put
$$\boldsymbol{\nu}_1(q)_S:=\ker (C- W^\ast),$$
where $C \colon \boldsymbol{Z}^q_S \to \boldsymbol{\Omega}^q_S$ is induced by the Cartier operator. Then we have the following
\begin{proposition}
The group schemes $\boldsymbol{\nu}_1(q)_S$ are smooth of relative dimension ${{r-1}\choose{q-1}} (p^r-1)$ over $S$ for all $q\geq 0.$\footnote{For $m,n\in \Z$, the binomial coefficient $m\choose n$ is defined as $\frac{m!}{(m-n)!n!}$ if $0 \leq n \leq m$ and 0 otherwise.} \label{dimprop}
\end{proposition}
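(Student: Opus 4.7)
The plan is to show that $C - W^\ast \colon \boldsymbol{Z}^q_S \to \boldsymbol{\Omega}^q_S$ is a smooth morphism of smooth affine $S$-group schemes; its kernel $\boldsymbol{\nu}_1(q)_S$ will then be automatically smooth over $S,$ as the fibre over the identity section of a smooth morphism, and its relative dimension will be $\dim \boldsymbol{Z}^q_S - \dim \boldsymbol{\Omega}^q_S.$ This breaks the task into two essentially independent pieces: computing the rank of $Z^qF_\ast \Omega^\bullet_S,$ which is locally free by Lemma \ref{locfreelem}, and showing that the differential of $C - W^\ast$ is surjective at every point.

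For the rank computation, I combine the two short exact sequences $0 \to B^q \to Z^q \to H^q \to 0$ and $0 \to Z^q \to F_\ast \Omega^q_S \to B^{q+1} \to 0$ with the Cartier isomorphism $H^q \cong \Omega^q_S$ (of rank ${r \choose q}$) and the identity $\mathrm{rank}(F_\ast \Omega^q_S) = p^r {r \choose q}.$ Using the base case $B^0 = 0$ together with Pascal's identity, a short induction yields $\mathrm{rank}(B^q) = (p^r-1){r-1 \choose q-1}$ and hence $\mathrm{rank}(Z^q) = {r \choose q} + (p^r-1){r-1 \choose q-1}.$

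The heart of the argument is the differential calculation. Since $C$ is $\Og_S$-linear (Cartier's theorem), its induced differential at the identity section is $C$ itself, a surjection of locally free $\Og_S$-modules $Z^qF_\ast \Omega^\bullet_S \twoheadrightarrow H^q \xrightarrow{\sim} \Omega^q_S.$ In contrast, $W^\ast$ factors through $g_{\boldsymbol{\Omega}^q_S / S} \colon (\boldsymbol{\Omega}^q_S)^{(1/p)} \to \boldsymbol{\Omega}^q_S,$ which by construction factors through the relative Frobenius of $(\boldsymbol{\Omega}^q_S)^{(1/p)}.$ Because the relative Frobenius of a smooth $S$-scheme has vanishing differential in characteristic $p$ (local manifestation of $\rmd(f^p) = 0$), the differential of $W^\ast$ at the identity section vanishes. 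Hence the differential of $C - W^\ast$ at the identity coincides with that of $C,$ and is therefore surjective; by translating along the group law, it is surjective at every point of $\boldsymbol{Z}^q_S.$

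Applying the standard criterion that a morphism of smooth $S$-schemes with everywhere surjective differential is smooth, I conclude that $C - W^\ast$ is smooth, so $\boldsymbol{\nu}_1(q)_S$ is smooth over $S$ of relative dimension $\mathrm{rank}(Z^q) - {r \choose q} = (p^r-1){r-1 \choose q-1},$ as required. The main technical obstacle is the vanishing of the differential of $W^\ast$; it is precisely this vanishing that enables the difference $C - W^\ast$ to inherit the favourable behaviour of the $\Og_S$-linear map $C,$ despite $W^\ast$ itself being a Frobenius-twisted morphism that is neither $\Og_S$-linear nor smooth on its own.
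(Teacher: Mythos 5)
Your proposal is correct and follows essentially the same route as the paper: both arguments observe that the differential of $W^\ast$ vanishes (since it factors through a Frobenius), so the Lie-algebra map of $C-W^\ast$ is the Cartier operator $C$, whose surjectivity onto $\Omega^q_S$ gives smoothness of the kernel, and both compute the rank of $B^qF_\ast\Omega^\bullet_S$ via the same pair of exact sequences and the same recursion with base case $B^0=0$. The only cosmetic difference is that the paper identifies $\Lie\boldsymbol{\nu}_1(q)_S$ with $B^qF_\ast\Omega^\bullet_S$ directly, whereas you obtain the dimension as $\mathrm{rank}(Z^q)-\binom{r}{q}$; these are the same computation.
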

\begin{proof}
Note that we have a canonical isomorphism $\Lie \boldsymbol{\Omega}^q_S = \Omega^q_S.$ Because the morphism induced by $W^\ast$ on Lie algebras is trivial, the map on Lie algebras induced by $C-W^\ast$ is equal to $C.$ In particular, the map $\Lie \boldsymbol{Z}^q_S = Z^qF_\ast \Omega^\bullet_{S}\to \Lie \boldsymbol{\Omega}^q_S = \Omega^q_S$ is surjective by the Cartier isomorphism, which shows that $\boldsymbol{\nu}_1(q)_S$ is smooth over $S$ \cite[Proposition 1.1 (e)]{LLR}. Moreover, the exact sequence
$$0 \to B^qF_\ast \Omega^\bullet_S \to Z^qF_\ast\Omega^\bullet_S \overset{C}{\to} \Omega_{S}^q\to 0$$ provides a canonical isomorphism $\Lie \boldsymbol{\nu}_1(q)_S =  B^qF_\ast\Omega^\bullet_S.$ Hence all that remains to be calculated is $\rk_{\Og_S} B^qF_\ast\Omega^\bullet_S.$ Put $b_q:=\rk_{\Og_S} B^qF_\ast\Omega^\bullet_S$ and $z_q:=\rk_{\Og_S} Z^qF_\ast\Omega^\bullet_S.$ We clearly have $b_{q+1} + z_q = {r\choose{q}}p^r,$ and the Cartier isomorphism gives $z_q - b_q = {r \choose q}.$ We obtain the recursion $b_{q+1}= {r \choose q}(p^r-1)-b_q$ for $q\geq 0,$ from which the claim follows by induction using that $B^0F_\ast\Omega^\bullet_S=0.$
\end{proof}

As indicated earlier, we now have

\begin{proposition}
The group schemes $\boldsymbol{\nu}_1(0)_S^{\RP},$..., $\boldsymbol{\nu}_1(r)_S^{\RP}$ represent the sheaves $\nu_1(0)_S,$..., $\nu_1(r)_S$ on $S_{\RP}.$ \label{representprop}
\end{proposition}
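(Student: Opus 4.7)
The plan is to recover $\boldsymbol{\nu}_1(q)_S^{\RP}$ as the kernel of an Artin-Schreier-type operator at the level of sheaves on $S_{\RP},$ and then to identify this kernel with $\nu_1(q)_S$ by invoking Cartier's classical theorem étale-locally.

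First, the $S$-group schemes $\boldsymbol{Z}^q_S = \underline{Z^q F_\ast \Omega^\bullet_S}$ and $\boldsymbol{\Omega}^q_S = \underline{\Omega^q_S}$ represent, via Yoneda, the natural extensions to the big étale site of $S$ of the respective quasi-coherent sheaves. After relative perfection and restriction to $S_{\RP},$ their $Y$-values on a relatively perfect $Y \to S$ remain $\Gamma(Y, y^\ast Z^q F_\ast \Omega^\bullet_S)$ and $\Gamma(Y, y^\ast \Omega^q_S)$ by the universal property of $(-)^{\RP}.$ Since relative perfection is a right adjoint it commutes with the kernel defining $\boldsymbol{\nu}_1(q)_S,$ yielding a left exact sequence of sheaves on $S_{\RP}$
$$0 \to \boldsymbol{\nu}_1(q)_S^{\RP} \to Z^q F_\ast \Omega^\bullet_S \xrightarrow{C - W^\ast} \Omega^q_S.$$

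Next, I unwind $W^\ast$ on sections over a relatively perfect $Y \to S.$ Using the identification $\underline{F_\ast \Omega^q_S} = (\boldsymbol{\Omega}^q_S)^{(1/p)}$ together with the description of $g_{X/S}$ on $Y$-points as pullback along $F_{Y/S},$ which is an isomorphism for $Y$ relatively perfect, the map $W^\ast$ becomes, after the canonical identification $y^\ast F_\ast \Omega^q_S \cong y^\ast \Omega^q_S$ induced by $F_{Y/S}^{-1},$ simply the natural inclusion of closed $q$-forms into all $q$-forms. Therefore $C - W^\ast$ coincides with the Artin-Schreier-type operator $\overline{C} - \pi$ appearing in the classical fundamental exact sequence associated to the Cartier isomorphism.

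Finally, the $d\log$ map $\Gm^{\otimes q} \to \Omega^q_S$ factors through $\boldsymbol{\nu}_1(q)_S^{\RP},$ since $d\log s_1 \wedge \cdots \wedge d\log s_q$ is closed and Cartier-fixed; this gives a canonical morphism $\nu_1(q)_S \to \boldsymbol{\nu}_1(q)_S^{\RP}.$ That it is an isomorphism reduces, étale-locally on $S,$ to Cartier's classical theorem identifying $\nu_1(q)$ with $\ker(\overline{C} - \pi)$ on a smooth $\F_p$-scheme; this reduction is legitimate thanks to Shiho's cohomological purity, which (as in the proof of Lemma \ref{locfreelem}) permits passage to smooth $\F_p$-schemes via flat relatively perfect covers. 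The main obstacle is the technical step of matching $W^\ast,$ defined through the Weil restriction $(\boldsymbol{\Omega}^q_S)^{(1/p)},$ with the ``identity'' part of the Cartier fundamental sequence; once this is settled, Proposition \ref{dimprop} provides a consistency check on relative dimensions.
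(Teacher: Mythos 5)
Your first two steps coincide with the paper's argument: relative perfection, being a right adjoint, turns $\boldsymbol{\nu}_1(q)_S^{\RP}$ into the kernel of $(C-W^\ast)^{\RP}$ on $S_{\RP},$ and since $g_{X/S}$ acts on $Y$-points of a relatively perfect $Y$ as composition with the isomorphism $F_{Y/S},$ the operator $(C-W^\ast)^{\RP}$ is exactly the map $C-1\colon Z^q\Omega^\bullet_S\to\Omega^q_S.$ That part is correct and is precisely how the paper proceeds. The divergence, and the problem, is in your final step. The paper at this point simply invokes Kato's exact sequence (3.1.4), which is stated and proved in exactly the generality at hand (a regular excellent $\F_p$-scheme with $\Omega^1_S$ locally free, on the site $S_{\RP}$) and asserts that $\nu_1(q)_S$, i.e.\ the image of $\rmd\log$, equals $\ker(C-1).$ You instead try to re-derive this identification by reducing to the classical Cartier--Illusie theorem for smooth $\F_p$-schemes via Shiho-style flat relatively perfect morphisms, and this reduction does not go through as written.

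Concretely: the tool you name, ``Shiho's cohomological purity,'' is the wrong one (purity concerns the local cohomology sheaves $\mathscr{H}^i_D$ and is what drives Lemma \ref{Xexactlem}; what Lemma \ref{locfreelem} uses is the existence of a flat relatively perfect morphism $\pi\colon\Spec\widehat{\Og}_{S,s}\to\mathbf{A}^m_{\F_p}$). More importantly, the transfer argument of Lemma \ref{locfreelem} works because $Z^qF_\ast\Omega^\bullet_S$ and $B^qF_\ast\Omega^\bullet_S$ are coherent $\Og_S$-modules, so flat base change along $\pi$ and $\xi$ controls them completely. The statement you need here --- that every section of $\ker(C-1)$ is étale-locally a sum of forms $\rmd\log\underline{s_1}\wedge\dots\wedge\rmd\log\underline{s_q}$ --- is not a statement about coherent sheaves: $\nu_1(q)_S$ is only an $\F_p$-subsheaf defined as an image, its formation does not commute with non-étale flat pullback for free, and the morphism $\pi$ points from the completed local ring of $S$ to affine space, so a Cartier-fixed closed form on (a neighbourhood in) $S$ neither descends to $\mathbf{A}^m_{\F_p}$ nor is it an $\F_p$-linear combination of forms pulled back from there (only an $\Og$-linear one, which destroys the condition $C\omega=\omega$). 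Establishing the inclusion $\ker(C-1)\subseteq\im(\rmd\log)$ in this generality is genuine work --- it is the content of Kato's proof of (3.1.4) --- and your sketch does not supply it. The fix is simply to cite \cite[p.~136, (3.1.4)]{Kato} at this point, as the paper does; your consistency check via Proposition \ref{dimprop} is harmless but cannot substitute for the missing surjectivity.
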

\begin{proof}
The morphism $g_{G/S}^{\RP}$ is the identity on $G^{\RP}$ for any $S$-group scheme $G.$ In particular, the morphism $(C-W^\ast)^{\RP}$ is equal to the map $C-1 \colon Z^q \Omega^\bullet_S \to \Omega^q_S$ from \cite[p. 136, (3.1.4)]{Kato}. Therefore, sequence (3.1.4) from \it loc. cit. \rm implies the result. 
\end{proof}

In order to gain some intuition as to how these smooth group schemes behave, we give the following examples: 

\begin{itemize}
\item We have $\boldsymbol{\nu}_1(0)_S = \F_p.$ Indeed, for $q=0,$ the Cartier operator $C\colon \Og_S = Z^0 F_\ast \Omega_S^\bullet \to \Omega^0_S=\Og_S$ is the identity, and $W^\ast$ equals the composition $\Ga \to \mathbf{G}_{\mathbf{a}}^{(1/p)} \overset{g_{\Ga/S}}{\to} \Ga,$ which is the relative Frobenius $F_{\Ga/S}.$ Hence $C-W^\ast = \Id_{\Ga}-F_{\Ga/S},$ which implies the claim. 

\item The group scheme $\boldsymbol{\nu}_1(1)_S$ is canonically isomorphic to $\mathbf{G}_{\mathrm{m}}^{(1/p)}/\Gm.$\footnote{The quotient $\mathbf{G}_{\mathrm{m}}^{(1/p)}/\Gm$ is representable because it becomes isomorphic to the \it affine \rm $S$-scheme $\mathbf{A}^{p^r-1}_S$ after the fppf-base change $F_S\colon S\to S.$} Write $F:=F_S.$ We shall first construct a morphism $\partial_0\colon \mathbf{G}_{\mathrm{m}}^{(1/p)} \to \underline{F_\ast \Omega^1_S}.$ Let $N_0:=\ker g_{\Ga/S},$ where $g_{\Ga/S} \colon \mathbf{G}_{\mathrm{a}}^{(1/p)} \to \Ga$ is the canonical morphism. Then the natural open immersion $\mathbf{G}_{\mathrm{m}}^{(1/p)} \subseteq \mathbf{G}_{\mathrm{a}}^{(1/p)}$ identifies $\mathbf{G}_{\mathrm{m}}^{(1/p)}$ with $\mathbf{G}_{\mathrm{a}}^{(1/p)} \backslash N_0.$ \footnote{This observation is due to J. Oesterlé \cite[p. 72]{Oes} in the case $r=1,$ who uses explicit equations. Let $T$ be a scheme over $S,$ and consider a commutative diagram
$$\begin{tikzcd}[ampersand replacement=\&]
 T \arrow{r}{\phi} \arrow[swap]{d}{F_{T/S}} \& \mathbf{G}_{\mathrm{a}}^{(1/p)} \arrow[d] \arrow{rd}{g_{\Ga/S}}\\
T^{(p)} \arrow[swap]{r}{\phi^{(p)}} \& (\mathbf{G}_{\mathrm{a}}^{(1/p)})^{(p)} \arrow[swap]{r}{\alpha} \& \Ga,
\end{tikzcd}$$ where $\alpha$ is the adjunction morphism and $\phi$ corresponds to $\alpha\circ \phi^{(p)}$ under the bijection $\Hom_S(T, \mathbf{G}_{\mathrm{a}}^{(1/p)})=\Hom_S(T^{(p)}, \Ga).$ Since $F_{T/S}$ is an homeomorphism, $\alpha\circ \phi^{(p)}$ is no-where vanishing if and only if $\phi$ factors through $\mathbf{G}_{\mathrm{a}}^{(1/p)}\backslash N_0.$}
In particular, the function $g_{\Ga/S}$ is invertible on $\mathbf{G}_{\mathrm{m}}^{(1/p)}.$ Moreover, the universal derivation $\rmd \colon F_\ast \Og_S \to F_\ast \Omega^1_S$ is $\Og_S$-linear and hence defines a morphism of schemes $\rmd \colon \mathbf{G}_{\mathrm{a}}^{(1/p)} \to \underline{F_\ast \Omega^1_S}.$ We put $$\partial_0 := g_{\Ga/S}^{-1} \rmd \colon \mathbf{G}_{\mathrm{m}}^{(1/p)} \to \underline{F_\ast \Omega^1_S}.$$ First note that we have $\partial_0(y)= \rmd\log y$ for any étale-local section of $\mathbf{G}_{\mathrm{m}}^{(1/p)}$ by construction. In particular, $\partial_0$ is a morphism of group schemes (as étale-local sections are fibre-wise dense in smooth $S$-schemes) and the image of the canonical closed immersion $\Gm \to \mathbf{G}_{\mathrm{m}}^{(1/p)}$ is contained in $\ker \partial_0,$ thus giving rise to a morphism $\partial \colon \mathbf{G}_{\mathrm{m}}^{(1/p)}/\Gm \to \underline{F_\ast \Omega^1_S}.$ This morphism clearly factors through $\boldsymbol{Z}^1_S,$ and after restricting to the small étale site of $S,$ $\im \partial$ coincides with $\nu_1(1)_S.$ Therefore we obtain a surjective morphism 
$$\partial \colon \mathbf{G}_{\mathrm{m}}^{(1/p)}/\Gm \to \boldsymbol{\nu}_1(1)_S.$$ An easy calculation shows that the map on Lie algebras induced by $g_{\Ga/S}$ vanishes; hence the same is true for $g_{\Ga/S}^{-1}.$ Since $g_{\Ga/S}(1)=1,$ the map
$$\Lie \partial_0 \colon F_\ast \Og_S = \Lie \mathbf{G}_{\mathrm{m}}^{(1/p)} \to \Lie \boldsymbol{\nu}_1(1)_S = B^1F_\ast \Omega^\bullet_S$$ coincides with the universal derivation $\rmd \,,$ which shows in particular that $\Lie \partial$ is an isomorphism. Therefore $\partial$ is étale, which means that it is an isomorphism as soon as $\partial^{\RP}$ is. This follows from Proposition \ref{representprop}.

\item Finally, choose a $p$-basis $t_1,..., t_r$ of $S;$ this is possible after shrinking $S$ if necessary since $\Omega^1_S$ is locally free of rank $r.$ Then the group scheme $\boldsymbol{\nu}_1(r)_S$ is cut out inside $\mathbf{G}_{\mathrm{a}}^{p^r}$ by the $p$-polynomial
$$x_{0...0(p-1)} - \sum_{0 \leq i_1,..., i_r \leq p-1} t_1^{i_1}\cdot ... \cdot t_r^{i_r}x_{i_1... i_r}^p.$$ Here, the coordinates of $\mathbf{G}_{\mathrm{a}}^{p^r}$ are $(x_{i_1...i_r})_{0 \leq i_1,..., i_r \leq p-1};$ see section 8 of \cite{OS} for this calculation. 
\end{itemize}
\subsection{Duality over a field}
In this subsection, we let $S= \Spec \kappa$ for some field $\kappa.$ Our assumptions on $S$ are then equivalent to $[\kappa:\kappa^p]=p^r.$ We shall denote by $\sHom_{\kappa_{\RP}}(-,-)$ the internal Hom functor of Abelian sheaves on $(\Spec \kappa)_{\RP}.$ We begin by recalling several definitions and results from \cite{BS, Suz}. A group scheme $G$ relatively perfectly of finite presentation over $\kappa$ is always of the form $G_0^{\RP}$ for some \it smooth \rm algebraic group $G_0$ over $\kappa$ by \cite[p. 28]{BS}; it is said to be \it unipotent \rm if such a $G_0$ can be chosen to be unipotent (this definition is different from the one used in \cite{Suz}, but they are equivalent by Proposition 3.1 of \it op. cit.\rm). Finally, we say that $G$ is \it wound unipotent \rm if $G_0$ can be chosen to be wound unipotent\footnote{We shall call a smooth unipotent algebraic group $G_0$ over $\kappa$ \it wound unipotent \rm if $\Hom_{\kappa}(\Ga, G_0)=0.$ In particular, $G_0$ need not be connected. This is the same terminology as in \cite{Suz} but differs from that of \cite{CGP} and \cite{Ros}.} or, equivalently, if $\Hom_{\kappa_{\RP}}(\mathbf{G}_{\mathrm{a}}^{\RP}, G)=0.$ Note that, for $n\leq m \in \N_0,$ we have a canonical injection $\nu_n(r)_\kappa \to \nu_m(r)_\kappa,$ and we put
$$\nu_{\infty}(r)_\kappa := \varinjlim \nu_n(r)_\kappa.$$ Let $D(\kappa_{\RP})$ be the derived category of sheaves of Abelian groups on $\kappa_{\RP},$ and let $D_0(\kappa_{\RP})$ be the smallest triangulated subcategory of $D^b(\kappa_{\RP})$ which contains $\Ga$ (regarded as a complex concentrated in degree 0; cf. \cite{Kato, Suz}). For a group scheme $G$ relatively perfectly of finite presentation over $\kappa,$ we have
\begin{align}\R\sHom_{\kappa_{\RP}}(G, \nu_{\infty}(r)_{\kappa}) = \varinjlim \R\sHom_{\kappa_{\RP}}(G, \nu_{n}(r)_{\kappa})\label{Rlim}\end{align} by \cite[p. 11]{Suz}. As in \cite[p. 139]{Kato}, we observe that we have an exact functor
$$\theta\colon D_0(\kappa_{\RP}, \F_p) \to D_0(\kappa_{\RP})$$ of triangulated categories, as well as a functor $\theta^!:=\R\sHom_{\kappa_{\RP}}(\F_p,-)$ in the opposite direction which satisfies
$$\theta(\R\sHom_{\F_p}(E, \theta^!(F))) = \R\sHom_{\kappa_{\RP}}(\theta(E), F)$$ for all $E\in D_0(\kappa_{\RP}, \F_p),$ $F\in D_0(\kappa_{\RP}).$ The free resolution $0 \to \Z \to \Z \to \F_p\to 0$ together with the exact sequence $0 \to \nu_1(r) \to \nu_{\infty}(r) \to \nu_{\infty}(r) \to 0$ shows that we have an isomorphism
$$\nu_1(r) \cong \theta^! (\nu_{\infty}(r)).$$ In particular, fixing such an isomorphism once and for all, we obtain 
$$\theta(\R\sHom_{\F_p}(E, \nu_1(r))) = \R\sHom_{\kappa_{\RP}}(\theta(E), \nu_{\infty}(r))$$ for all $E\in D_0(\kappa_{\RP}),$ first for $E=\Ga$ and then, by induction, for all $E.$  
Hence, exactly as in \cite[Theorem 4.3]{Kato}, we see that $\R\sHom_{\kappa_{\RP}}(-, \nu_{\infty}(r))$ induces a contravariant autoequivalence of $D_0(\kappa_{\RP}).$ This observation is already used in \cite{Suz}. The following result is a refinement of Theorem \ref{Katoequivthm}:
\begin{proposition} \rm (Suzuki) \it For a group scheme $G$ relatively perfectly of finite presentation over $\kappa,$ we denote the element of $D(\kappa_{\RP})$ consisting of $G$ in degree 0 also by $G.$ Then the following claims hold: \\
(i) The category $D_0(\kappa_{\RP})$ consists precisely of those (bounded) complexes whose cohomology objects are representable by unipotent group schemes relatively perfectly of finite presentation over $\kappa.$ \\
(ii) Suppose that $G$ is a wound unipotent group scheme relatively perfectly of finite presentation over $\kappa.$ Then $\R\sHom_{\kappa_{\RP}}(G, \nu_{\infty}(r)_{\kappa})$ is concentrated in degree 0. Moreover, $$G^\vee:=\sHom_{\kappa_{\RP}}(G, \nu_{\infty}(r)_\kappa)$$ is representable by a wound unipotent group scheme relatively perfectly of finite presentation over $\kappa.$ The canonical morphism $G\to (G^\vee)^\vee$ is an isomorphism. \\
(iii) If $0 \to G_1\to G_2 \to G_3 \to 0$ is an exact sequence of \rm wound unipotent \it group schemes relatively perfectly of finite presentation over $\kappa,$ then the sequence $0 \to G_3^\vee \to G_2^\vee \to G_1^\vee \to 0$ is exact. 
\end{proposition}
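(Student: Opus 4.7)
The plan is to establish (i), (ii), (iii) in that order, exploiting throughout the contravariant autoequivalence $\mathbf{D}(-) := \R\sHom_{\kappa_{\RP}}(-, \nu_{\infty}(r))$ of $D_0(\kappa_{\RP})$ constructed immediately above the statement.

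\emph{For (i)}, I would argue both inclusions by dévissage. The class $\mathcal{U} \subset \Ab(\kappa_{\RP})$ of sheaves representable by unipotent group schemes relatively perfectly of finite presentation is closed under kernels, cokernels, and extensions; this follows from the corresponding closure for smooth commutative unipotent groups over $\kappa$ together with the exactness of $(-)^{\RP}$ on short exact sequences of smooth group schemes, a consequence of Proposition \ref{RPRprop}. Hence the full subcategory of $D^b(\kappa_{\RP})$ of objects whose cohomology lies in $\mathcal{U}$ is triangulated and contains $\Ga$, so by minimality it contains $D_0(\kappa_{\RP})$; this proves one inclusion. For the reverse, given $G=G_0^{\RP}$ with $G_0$ smooth unipotent, the standard structure theory yields a finite filtration of $G_0$ whose graded pieces fit in exact sequences $0\to F \to \Ga^n \to \Ga^m \to 0$. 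Iterating the associated distinguished triangles places each $F^{\RP}$, and therefore $G$, in $D_0(\kappa_{\RP})$; general bounded complexes with cohomology in $\mathcal{U}$ are then handled by the standard truncation triangles.

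\emph{For (ii)}, part (i) gives $G \in D_0(\kappa_{\RP})$, whence $\mathbf{D}(G) \in D_0(\kappa_{\RP})$ with cohomology sheaves in $\mathcal{U}$, and the biduality isomorphism $G \cong \mathbf{D}(\mathbf{D}(G))$ is inherited from the autoequivalence. The substantive content is concentration of $\mathbf{D}(G)$ in degree zero and woundness of $G^{\vee}$. Since wound unipotent groups are $p^n$-torsion for some $n$, the complex $G$ lifts to an object of $D_0(\kappa_{\RP}, \Z/p^n\Z)$. The formula $\theta(\R\sHom_{\Z/p^n\Z}(E, \nu_n(r))) \cong \R\sHom_{\kappa_{\RP}}(\theta(E), \nu_{\infty}(r))$, obtained exactly as for $n=1$ in the discussion preceding the statement but starting from the short exact sequence $0\to\nu_n(r)\to\nu_{\infty}(r)\xrightarrow{p^n}\nu_{\infty}(r)\to 0$, reduces both claims to the analogous statements in the $\Z/p^n\Z$-linear setting, where Theorem \ref{Katoequivthm} is directly applicable. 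There, a further dévissage along wound $p$-polynomial subgroups $F \subset \Ga^n$ (kernels of separable $p$-polynomials $\Ga^n\to \Ga^m$), combined with a direct verification that $\R\sHom_{\Z/p^n\Z}(-, \nu_n(r))$ of these atomic wound pieces is concentrated in degree zero and wound (the dual $p$-polynomial being separable iff the original is), settles the concentration claim via the long exact sequence of $\sExt$, with an induction on the length of the filtration.

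\emph{Claim (iii)} is then formal: $\mathbf{D}$ sends the distinguished triangle $G_1 \to G_2 \to G_3 \to G_1[1]$ to $G_3^{\vee} \to G_2^{\vee} \to G_1^{\vee} \to G_3^{\vee}[1]$, and, each vertex being concentrated in degree zero by (ii), the associated long exact sequence of cohomology sheaves collapses to $0 \to G_3^{\vee} \to G_2^{\vee} \to G_1^{\vee} \to 0$. The main obstacle in this programme is the concentration of $\mathbf{D}(G)$ in degree zero in (ii): the abstract autoequivalence alone only pins $\mathbf{D}(G)$ down up to a bounded complex, and ruling out higher $\sExt$'s genuinely requires both the wound hypothesis and the explicit dévissage into wound atoms of $\Ga$ together with the $\theta$-lifting from $\nu_n(r)$ to $\nu_{\infty}(r)$.
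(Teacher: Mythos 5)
You should first be aware that the paper does not actually prove parts (i) and (ii): it cites them directly from Suzuki (\cite[Propositions 3.2--3.5]{Suz}), and only part (iii) is argued, namely as a formal consequence of the vanishing of $\sExt^1_{\kappa_{\RP}}(G_1,\nu_\infty(r)_\kappa)$ contained in (ii). Your treatment of (iii) coincides with the paper's, and your dévissage framework for (i) and the reduction of biduality in (ii) to Theorem \ref{Katoequivthm} via $\theta$ are the right architecture (it is essentially how Kato and Suzuki proceed).

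The genuine gap is in the one step you describe as ``a direct verification'': the concentration of $\R\sHom_{\kappa_{\RP}}(F,\nu_\infty(r)_\kappa)$ in degree $0$ for a wound atom $F=\ker(\phi\colon\Ga^n\to\Ga)$. This is precisely the non-trivial content of \cite[Propositions 3.3 and 3.4]{Suz}, and the criterion you offer in the parenthesis --- ``the dual $p$-polynomial being separable iff the original is'' --- is not the relevant condition. Since $\R\sHom_{\kappa_{\RP}}(\Ga,\nu_\infty(r)_\kappa)$ is concentrated in degree $1$ (already over a perfect field the Serre dual of $\Ga$ sits in degree $1$ via Artin--Schreier theory), the triangle for $F$ gives $\sExt^1(F,\nu_\infty(r))=\mathrm{coker}\bigl(\sExt^1(\Ga,\nu_\infty(r))\to\sExt^1(\Ga^n,\nu_\infty(r))\bigr)$, and what must be shown is that this transposed map of sheaves on $\kappa_{\RP}$ is \emph{surjective}. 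Separability of $\phi$ only governs smoothness of $F$; the surjectivity in question is governed by woundness of $F$, i.e.\ by the non-vanishing of the principal part of $\phi$ on nonzero vectors (equivalently, the absence of $\Ga$-quotients of $\Ga^n/F$), and verifying it requires working with the explicit description of $\sExt^1(\Ga,\nu_\infty(r))$ as a direct limit over Frobenius twists on the relatively perfect site. Without this input the long exact sequence does not collapse, and (iii) --- which in both your argument and the paper's rests entirely on this vanishing --- is left unsupported. So the proposal is a correct skeleton of Suzuki's proof, but the load-bearing step is asserted rather than proved, and the justification sketched for it would not go through as stated.
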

\begin{proof}
Part (i) is \cite[Proposition 3.2]{Suz}. The first part of (ii) follows from \cite[Propositions 3.3 and 3.4]{Suz}, the second and third from \cite[Proposition 3.5]{Suz}. Finally, (iii) follows from the vanishing of $\sExt^1_{\kappa_{\RP}}(G_1, \nu_{\infty}(r)_{\kappa})$ contained in part (ii). 
\end{proof}\\
\\
$\mathbf{Remark}.$ There is a similar duality for split unipotent groups, which will not be used in this article. See \cite[Section 3]{Suz} for more details. \\

For $0 \leq q \leq r,$ we have $(\boldsymbol{\nu}_1(q)_{\kappa}^{\RP})^\vee = \boldsymbol{\nu}_1(r-q)_{\kappa}^{\RP};$ see \cite[Theorem 4.3]{Kato} or \cite[p. 12]{Suz}. This shows that every $\boldsymbol{\nu}_1(q)_{\kappa}^{\RP}$ (and hence every $\boldsymbol{\nu}_1(q)_{\kappa}$) is wound unipotent. Indeed, an homomorphism $\mathbf{G}_{\mathrm{a}}^{\RP} \to \boldsymbol{\nu}_1(q)_{\kappa}^{\RP}$ is the same as a $\Z$-bilinear map $\mathbf{G}_{\mathrm{a}}^{\RP} \times_\kappa \boldsymbol{\nu}_1(r-q)_{\kappa}^{\RP}\to \boldsymbol{\nu}_1(r)_{\kappa}^{\RP}.$ Since the $\kappa\sep$-points are dense in $\boldsymbol{\nu}_1(r-q)_{\kappa}^{\RP}$ by \cite[Proposition 8.5]{BS} and $\boldsymbol{\nu}_1(r)_{\kappa}^{\RP}$ is wound unipotent by \cite[Theorem 3.2 (ii)]{Kato}, any such map vanishes. 

\section{Weakly permawound groups}
As before, let $\kappa$ be a field of characteristic $p>0$ such that $[\kappa:\kappa^p]=p^r$ for some $r\in \N_0.$ Let $G$ be a smooth unipotent algebraic group over $\kappa.$ We shall first recall the definition of \it weakly permawound \rm algebraic groups, recently introduced by Z. Rosengarten \cite[Definition 5.1]{Ros}. These are characterised by the fact that they admit filtrations whose subquotients are cut out inside some $\mathbf{G}_{\mathrm{a}}^{d}$ by a $p$-polynomial whose principal part is \it universal \rm \cite[Definition 3.1, Proposition 6.4]{Ros}. We shall then classify weakly permawound algebraic groups \it up to relative perfection; \rm this will generalise Theorem 10.5 from \cite{OS}. 

\begin{definition} \rm (Rosengarten) \it (i) Let $F = F_1(X_1) + ... + F_n(X_n)$ be a $p$-polynomial in $\kappa[X_1,..., X_n]$ for some $n\in \N.$ Recall that the \rm principal part \it $P$ of $F$ is the sum of the leading monomials of the $F_j.$ We say that $P$ is \rm universal \it if the homomorphism $\kappa^n \to \kappa$ given by $P$ is surjective. \\
(ii) Let $G$ be a smooth unipotent algebraic group over $\kappa.$ Then $G$ is \rm weakly permawound \it if for all fppf-exact sequences 
$$ G \to E \to \Ga \to 0,$$ we have $\Hom_{\kappa}(\Ga, E) \not=0.$ 
\end{definition}

\noindent$\mathbf{Example}.$ One sees immediately that $\Ga$ is weakly permawound \cite[Proposition 5.3]{Ros}. The principal example of a smooth \it wound unipotent \rm weakly permawound algebraic group is $\boldsymbol{\nu}_1(r)_{\kappa}.$ Indeed,  $\boldsymbol{\nu}_1(r)_{\kappa}$ is isomorphic to the smooth algebraic group $\mathscr{V}$ defined in \cite[Definition 7.3]{Ros}, which is weakly permawound by \cite[Theorem 6.10]{Ros}. This follows from the presentation of $\boldsymbol{\nu}_1(r)_{\kappa}$ given in the third example following Proposition \ref{representprop}.

\begin{lemma}
Let $G$ be a smooth wound unipotent algebraic group over $\kappa.$ Then $G$ is weakly permawound if and only if so is $G^{(1/p)}.$ 
\end{lemma}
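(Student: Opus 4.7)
My plan is to reduce weak permawoundness to a $\kappa\{F\}$-torsion condition on $\Ext^1_\kappa(\Ga,-)$ and then transport this condition across the Frobenius adjunction. Both Frobenius pullback $(-)^{(p)}$ (flat base change along $F_\kappa$) and Weil restriction $(-)^{(1/p)}=f_\ast$ (for the finite locally free morphism $f=F_\kappa$ of degree $p^r$) are exact on fppf Abelian sheaves on $\Spec\kappa$, and together form an adjoint pair $(-)^{(p)}\dashv(-)^{(1/p)}$. Deriving this adjunction and using the canonical identification $\Ga^{(p)}\cong\Ga$, one obtains natural $\kappa\{F\}$-equivariant isomorphisms $\Ext^i_\kappa(\Ga,Y)\cong\Ext^i_\kappa(\Ga,Y^{(1/p)})$ for every fppf Abelian sheaf $Y$. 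Moreover, the factorisation $g_{G/\kappa}=c\circ F_{G^{(1/p)}/\kappa}$ through the relative Frobenius and the counit $c$ shows that, under these isomorphisms, the map $\Ext^i(\Ga,G^{(1/p)})\to\Ext^i(\Ga,G)$ induced by $g_{G/\kappa}$ corresponds to multiplication by $F\in\kappa\{F\}$ on $\Ext^i(\Ga,G)$.

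Unwinding the long exact Ext sequence of an extension $0\to\bar G\to E\to\Ga\to 0$ with $\bar G$ wound (so $\Hom_\kappa(\Ga,\bar G)=0$) gives $\Hom_\kappa(\Ga,E)\cong\{f\in\kappa\{F\}\mid f^\ast[E]=0\}$. Consequently, $G$ is weakly permawound if and only if $\Ext^1_\kappa(\Ga,\bar G)$ is a torsion $\kappa\{F\}$-module for every quotient $\bar G$ of $G$. For the implication that $G^{(1/p)}$ being weakly permawound forces the same for $G$, each quotient $\bar G$ of $G$ yields a quotient $\bar G^{(1/p)}$ of $G^{(1/p)}$ by exactness of $(-)^{(1/p)}$; the hypothesis makes $\Ext^1(\Ga,\bar G^{(1/p)})$ $\kappa\{F\}$-torsion, and the natural isomorphism then transports this to $\Ext^1(\Ga,\bar G)$.

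The reverse implication is more delicate because an arbitrary quotient $H=G^{(1/p)}/L$ need not be of the form $\bar G^{(1/p)}$. Setting $N=\ker g_{G/\kappa}$ (which is fppf-surjective with $\R^i\boldsymbol{\rho}_\ast N=0$ by Proposition \ref{RPRprop}) and writing $\bar L\subseteq G$ for the image of $L$ under $g_{G/\kappa}$, one obtains a canonical short exact sequence
\[
0\to N/(L\cap N)\to H\to G/\bar L\to 0.
\]
The right-hand term is a quotient of $G$, so its $\Ext^1$ with $\Ga$ is $\kappa\{F\}$-torsion by hypothesis. For the left-hand term, the long exact Ext sequence of $0\to N\to G^{(1/p)}\to G\to 0$, read through the multiplication-by-$F$ description, exhibits each $\Ext^i(\Ga,N)$ as an extension of the $F$-torsion of $\Ext^i(\Ga,G)$ by a quotient of $\Ext^{i-1}(\Ga,G)$ modulo its image under $F\cdot$; both are $\kappa\{F\}$-torsion whenever the corresponding $\Ext^\ast(\Ga,G)$ is. An analogous Ext analysis applied to quotients of $N$, assembled via the long exact sequence attached to the decomposition of $H$ above, then yields that $\Ext^1_\kappa(\Ga,H)$ is $\kappa\{F\}$-torsion, so $G^{(1/p)}$ is weakly permawound.

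The main technical hurdle I foresee is controlling the $\Ext^2$ terms that enter the analysis of quotients of $N$; I expect them to be tamed either by iterating the natural isomorphism together with the torsion hypothesis on $\Ext^1(\Ga,G)$, or by invoking a vanishing theorem for $\Ext^2_{\kappa,\mathrm{fppf}}(\Ga,-)$ on smooth commutative group schemes over a field.
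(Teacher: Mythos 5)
Your core idea of transporting the problem across the adjunction for Weil restriction along $F_\kappa$ --- yielding $\Ext^i_\kappa(\Ga, Y^{(1/p)})\cong\Ext^i_\kappa(\Ga,Y)$ with a Frobenius-twisted module structure, and identifying $(g_{G/\kappa})_\ast$ with precomposition by $F$ --- is exactly the mechanism the paper uses. The decisive difference is the \emph{criterion} for weak permawoundness that you feed into this isomorphism. The paper extracts from Rosengarten's structure theory (the filtration with graded pieces $N_0$ and $\boldsymbol{\nu}_1(r)_\kappa$, plus his Lemmas 4.7 and 6.4) the statement that a smooth wound unipotent $G$ is weakly permawound if and only if $\Ext^1_\kappa(\Ga,G)$ is \emph{finite-dimensional over $\kappa$} --- a condition referring to $G$ alone, with no quantification over quotients. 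Since $[\kappa:\kappa^p]=p^r<\infty$, finite-dimensionality is manifestly preserved by the twist, and the lemma follows in one line. Your criterion ($\kappa\{F\}$-torsionness of $\Ext^1(\Ga,\bar G)$ for every quotient $\bar G$) is the direct unwinding of the definition, but it quantifies over all quotients, and this is what forces you into the second half of your argument: quotients of $G^{(1/p)}$ are not all of the form $\bar G^{(1/p)}$, so you must analyse $H=G^{(1/p)}/L$ via the infinitesimal kernel $N=\ker g_{G/\kappa}$.

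That second half is where the genuine gap lies, and you have flagged it yourself. To control $\Ext^1(\Ga, N/(N\cap L))$ you need the connecting map into $\Ext^2_{\kappa,\mathrm{fppf}}(\Ga, N\cap L)$, where $N\cap L$ is an essentially arbitrary infinitesimal subgroup scheme of $N$; your proposed rescue --- a vanishing theorem for $\Ext^2_{\kappa,\mathrm{fppf}}(\Ga,-)$ on \emph{smooth} groups --- would not apply to these infinitesimal coefficients even if it were available, and no such vanishing holds in the fppf topology in general (by Breen's computations these groups are typically nonzero). So the implication ``$G$ weakly permawound $\Rightarrow G^{(1/p)}$ weakly permawound'' is not actually proved. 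Two further points need repair even in the parts you do carry out: (a) the identification $\Hom_\kappa(\Ga,E)\cong\{f\in\kappa\{F\}\mid f^\ast[E]=0\}$ requires $\bar G$ to be \emph{wound}, so your criterion must be restricted to wound quotients (non-wound quotients satisfy the defining condition automatically, but then the hypothesis on $G$ gives you no torsion statement for $G/\bar L$ when that quotient fails to be wound, which is an additional case you would have to handle); and (b) the whole argument should be run after base change to $\kappa\sep$, as the paper does via \cite[Proposition 6.7]{Ros}, since weak permawoundness is most conveniently tested there. My recommendation is to replace the quotient-wise torsion criterion by the finite-dimensionality criterion; once you have that, your adjunction argument closes the proof immediately and the entire analysis of $N$ becomes unnecessary.
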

\begin{proof}
By \cite[Proposition 6.7]{Ros}, we may assume that $\kappa$ is separably closed. We shall use the following observation used (implicitly) in \cite{Ros}: the group $G$ is weakly permawound if and only if $\Ext^1_\kappa(\Ga, G)$ is finite-dimensional over $\kappa.$ Indeed, if $G$ is weakly permawound, it admits a finite filtration with successive quotients $N_0$ or $\boldsymbol{\nu}_1(r)_\kappa$  by \cite[Theorem 9.5]{Ros}. Since both those groups are cut out by reduced $p$-polynomials with universal principal part, $\Ext^1_\kappa(\Ga, G)$ is finite-dimensional by \cite[Proposition 4.2]{Ros} and dévissage. Conversely, there exists a finite filtration of $G$ whose successive quotients are smooth, wound, and annihilated by $p$ (this follows from the connected case \cite[Chapter 10.2, Lemma 12]{BLR}). Each successive quotient is cut out inside some power of $\Ga$ by a reduced $p$-polynomial by \cite[Lemma B.1.7 and Proposition B.1.13]{CGP}, whose principal parts must be universal by \cite[Lemma 4.7]{Ros}. Hence $G$ is weakly permawound by \cite[Lemma 6.4]{Ros}. Now observe that, since $F_\kappa$ is finite, its higher derived images in the étale topology vanish. Therefore, we have a canonical isomorphism
$$\Ext^i_\kappa (\Ga, G^{(1/p)})=F_{\kappa \ast} \Ext_\kappa^i (\Ga, G)$$ of coherent sheaves on $\Spec \kappa$ for all $i\geq 0.$ This implies the claim.  
\end{proof}

We are now ready to prove 
\begin{proposition}
Suppose that $\kappa$ is separably closed. Let $G$ be a smooth wound unipotent algebraic group over $\kappa.$ Then the following are equivalent: \\
(i) $G$ is weakly permawound,\\
(ii) $G^{\RP}$ admits a finite filtration $0=G_0 \subseteq ... \subseteq G_n=G^{\RP}$ such that $G_{j+1}/G_j \cong \boldsymbol{\nu}_1(r)_\kappa^{\RP}$ for $j=0,..., n-1,$ and \\
(iii) the dual $(G^{\RP})^\vee$ of $G^{\RP}$ is étale over $\kappa.$ \label{étalepwprop}
\end{proposition}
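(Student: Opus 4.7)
My plan is to establish the equivalences by proving (ii) $\Leftrightarrow$ (iii) via the duality framework of the previous subsection, and (i) $\Leftrightarrow$ (ii) via Rosengarten's structure theorem for weakly permawound groups; these two equivalences can be treated largely independently.

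For (ii) $\Leftrightarrow$ (iii): I apply the duality functor $(-)^\vee = \sHom_{\kappa_{\RP}}(-, \nu_{\infty}(r)_{\kappa})$ to the filtration. By part (iii) of Suzuki's Proposition, dualisation preserves exactness on wound unipotent groups relatively perfectly of finite presentation, and $(\boldsymbol{\nu}_1(r)_\kappa^{\RP})^\vee = \boldsymbol{\nu}_1(0)_\kappa^{\RP} = \F_p$ is étale; hence $(G^{\RP})^\vee$ inherits a filtration by copies of $\F_p$ and is therefore étale, proving (ii) $\Rightarrow$ (iii). Conversely, an étale unipotent group scheme over the separably closed field $\kappa$ is a finite commutative $p$-group, which admits a composition series with all successive quotients equal to $\F_p = \boldsymbol{\nu}_1(0)_\kappa^{\RP}$; dualising this series and invoking the double-duality isomorphism $G^{\RP} \cong ((G^{\RP})^\vee)^\vee$ (part (ii) of Suzuki's Proposition) yields (ii).

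For (i) $\Rightarrow$ (ii): I invoke Rosengarten's \cite[Theorem 9.5]{Ros} (already used in the preceding lemma) to obtain a finite filtration of $G$ whose successive quotients are either $\boldsymbol{\nu}_1(r)_\kappa$ or the specific group $N_0$ defined there. Relative perfection is exact on smooth group schemes by Proposition \ref{RPRprop}, so it suffices to verify (ii) for the $N_0$-pieces after passage to $(-)^{\RP}$; this follows from the analogous statement \cite[Theorem 10.5]{OS} which the present proposition generalises.

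For (ii) $\Rightarrow$ (i): The strategy is to descend the filtration to a finite Frobenius level. Using the adjunction $\Hom_\kappa(X^{\RP}, Y^{\RP}) = \varinjlim \Hom_\kappa(X^{(1/p^n)}, Y)$ and the analogous formula for $\Ext^1$, I inductively realise, for some sufficiently large $n$, a filtration of $G^{(1/p^n)}$ whose successive quotients are $\boldsymbol{\nu}_1(r)_\kappa$. Closure of weak permawoundness under such extensions (via Rosengarten's Lemma 6.4, or equivalently the finite-dimensionality criterion for $\Ext^1_\kappa(\Ga, -)$ from the preceding lemma) then shows that $G^{(1/p^n)}$ is weakly permawound, and the preceding lemma transfers this to $G$ itself. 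The main obstacle will be precisely this descent step: while each extension class in the filtration is individually realised at a finite Frobenius level, coherently descending the entire filtration requires some care with the inverse-limit structure of $(-)^{\RP}$ and the compatibilities between successive realisations.
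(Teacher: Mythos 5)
Your equivalence (ii) $\Leftrightarrow$ (iii) is sound, and your direct argument for (iii) $\Rightarrow$ (ii) --- taking a composition series of the constant finite $p$-group $(G^{\RP})^\vee$ with graded pieces $\F_p$ and dualising it back using exactness of $(-)^\vee$ and double duality --- is a genuinely different and arguably cleaner route than the paper's, which never proves (iii) $\Rightarrow$ (ii) directly but instead closes the cycle via (iii) $\Rightarrow$ (i). Your (i) $\Rightarrow$ (ii) also follows the paper's line (Rosengarten's filtration with graded pieces $\boldsymbol{\nu}_1(r)_\kappa$ or $N_0$), except that your justification for discarding the $N_0$-pieces is off: the relevant point is not \cite[Theorem 10.5]{OS} (which concerns unirationality when $[\kappa:\kappa^p]=p$ and is recovered as a consequence of the present proposition, so citing it here risks circularity) but simply that $N_0=\ker g_{\Ga/\kappa}$ is infinitesimal, so $\boldsymbol{\rho}_\ast N_0=\R^1\boldsymbol{\rho}_\ast N_0=0$ by Proposition \ref{RPRprop} (ii) and hence $G^{\RP}\to (G/N_0)^{\RP}$ is an isomorphism.

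The genuine gap is the return implication. You propose (ii) $\Rightarrow$ (i) by descending the entire filtration of $G^{\RP}$ to a single finite Frobenius level $G^{(1/p^n)}$, and you yourself flag the coherent descent of all the extension data as the main unresolved obstacle; as written, the cycle of implications is therefore not closed. The paper sidesteps this entirely by proving (iii) $\Rightarrow$ (i) instead: start from a filtration of $G$ itself by smooth wound $p$-torsion pieces (which exists unconditionally by \cite[Chapter 10.2, Lemma 12]{BLR}), observe that the duals of the relatively perfected graded pieces are étale because they are subquotients of the étale group $(G^{\RP})^\vee$, conclude by double duality and separable closedness of $\kappa$ that each graded piece becomes $(\boldsymbol{\nu}_1(r)_\kappa^{\RP})^{n_j}$ after relative perfection, and then descend only a single morphism per graded piece, namely a surjection $(\boldsymbol{\nu}_1(r)_\kappa^{n_j})^{(1/p^{m_j})}\to U_{j+1}/U_j$ for $m_j\gg 0$. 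Weak permawoundness then follows from \cite[Propositions 5.4 and 5.6]{Ros} together with the lemma that $G$ is weakly permawound if and only if $G^{(1/p)}$ is. If you wish to keep your architecture, the least painful fix is to replace your (ii) $\Rightarrow$ (i) by this (iii) $\Rightarrow$ (i) argument: the key simplification is that the filtration to be descended should be chosen on $G$ at finite level from the start, with duality used only to identify its graded pieces after relative perfection, rather than constructed on $G^{\RP}$ and then descended wholesale.
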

\begin{proof}
(i)$\Rightarrow$(ii): By \cite[Theorem 9.5]{Ros}, $G$ admits a finite filtration $0=U_0 \subseteq U_1 \subseteq ... U_d = G$ such that $U_{j+1}/U_j \cong \boldsymbol{\nu}_1(r)_\kappa$ or $U_{j+1}/U_j\cong N_0$ for all $j=0,..., d-1.$ The right exactness of relative perfection yields a finite filtration $0=U_0^{\RP} \subseteq ... \subseteq U_d^{\RP} =G^{\RP}.$ We shall proceed by induction on $d,$ the case $d=0$ being trivial. If $U_1\cong N_0,$ then the map $G^{\RP} \to (G/U_1)^{\RP}$ is an isomorphism by Proposition \ref{RPRprop} (ii). If $U_1\cong \boldsymbol{\nu}_1(r)_\kappa,$ then the sequence
$$0 \to U_1^{\RP} \to G^{\RP} \to (G/U_1)^{\RP} \to 0$$ is exact. In both cases, we reduce to the case of a filtration of length $d-1.$ \\
(ii)$\Rightarrow$(iii): Because the duality on wound unipotent groups relatively perfectly of finite presentation over $\kappa$ is exact, we obtain a filtration on $G^\vee$ whose successive quotients are isomorphic to the dual of $\boldsymbol{\nu}_1(r)_\kappa^{\RP}.$ However, this dual is isomorphic to $\F_p,$ so the claim follows. \\
(iii)$\Rightarrow$(i): We can find a finite filtration $0=U_0 \subseteq U_1 \subseteq ... \subseteq U_d = G$ by smooth wound subgroups such that the graded pieces $U_{j-1}/U_j$ are smooth, wound unipotent, and annihilated by $p$ (this follows from the connected case \cite[Chapter 10.2, Lemma 12]{BLR}). Upon relative perfection, we obtain a finite filtration of $G^{\RP}$ whose graded pieces $U_{j+1}^{\RP}/U_j^{\RP}$ are annihilated by $p.$ By assumption, $(G^{\RP})^\vee$ is étale, therefore so are the $(U_{j+1}^{\RP}/U_j^{\RP})^\vee.$ In particular, we must have 
$$U_{j+1}^{\RP}/U_j^{\RP} \cong (\boldsymbol{\nu}_1(r)_\kappa^{\RP})^{n_j}$$ for some $n_j\in \N_0;$ here we use that $\kappa$ is separably closed. We obtain surjective morphisms
$$(\boldsymbol{\nu}_1(r)_\kappa^{n_j})^{(1/p^{m_j})} \to U_{j+1}/U_j$$ for some $m_j\gg 0.$ This shows that the $U_{j+1}/U_j$ are weakly permawound \cite[Proposition 5.4]{Ros}; hence so is $G$ by \cite[Proposition 5.6]{Ros}.
\end{proof}
\begin{corollary}
Let $G$ be a smooth wound unipotent algebraic group over $\kappa.$ Then $G$ is weakly permawound if and only if $(G^{\RP})^\vee$ is étale over $\kappa.$ In particular, the category of relative perfections of wound unipotent weakly permawound algebraic groups over $\kappa$ is canonically equivalent to the category of $p$-primary finite étale group schemes over $\kappa.$ 
\end{corollary}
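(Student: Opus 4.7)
The first statement is Proposition \ref{étalepwprop} with the assumption of separable closure removed, so the plan is to deduce it by descent from the separably closed case. Weak permawoundness of $G$ over $\kappa$ is equivalent to that of $G_{\kappa\sep}$ over $\kappa\sep$ by \cite[Proposition 6.7]{Ros} (the same fact was used in the proof of the lemma above). On the other side, étaleness of $(G^{\RP})^\vee$ over $\kappa$ may be tested after base change to $\kappa\sep.$ For this reduction to work I must verify that forming the dual commutes with base change along $\kappa\to\kappa\sep,$ i.e., that the natural comparison map $\bigl((G^{\RP})^\vee\bigr)_{\kappa\sep}\to \bigl((G_{\kappa\sep})^{\RP}\bigr)^\vee$ is an isomorphism. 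This is where the real work lies; I would argue it from the functoriality of the relatively perfect site under étale base change noted in Section 2.1, from the fact that Kähler differentials commute with étale base change (so $\nu_{\infty}(r)_\kappa$ pulls back to $\nu_{\infty}(r)_{\kappa\sep}$), and from the representability of the dual provided by the Suzuki proposition above, which makes the comparison map well defined as a morphism of group schemes. Granting this, Proposition \ref{étalepwprop} immediately yields the biconditional over arbitrary $\kappa.$

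For the ``in particular'' statement, the plan is to show that the duality functor $H\mapsto H^\vee$ itself realises the claimed equivalence (contravariantly, and hence also covariantly by biduality). By the first part of the corollary just proved, this functor carries wound unipotent weakly permawound groups, up to relative perfection, into the category of $p$-primary finite étale group schemes over $\kappa.$ That the functor is fully faithful, with quasi-inverse given by the same construction applied to the étale side, is biduality from the Suzuki proposition (ii), applied within the ambient category of wound unipotent objects relatively perfectly of finite presentation. For essential surjectivity, given an arbitrary $p$-primary finite étale group scheme $E$ over $\kappa,$ I would regard $E$ as an object of that ambient category (an étale group scheme of finite type is relatively perfect by \cite[Lemma 1.3]{Kato}, and a $p$-primary étale group contains no copy of $\Ga^{\RP}$), form $H:=E^\vee,$ write $H=G^{\RP}$ for some smooth wound unipotent $G$ by \cite[p.~28]{BS}, and observe via biduality that $(G^{\RP})^\vee=E$ is étale; the first half of the corollary then forces $G$ to be weakly permawound, completing the argument.

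The chief obstacle, as indicated, is the base-change compatibility of the duality functor in the first paragraph; everything else is formal, relying on results already proven in Section 2 and on \cite{Ros}.
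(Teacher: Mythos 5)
Your proposal is correct and takes essentially the same route as the paper: the paper's proof likewise reduces to Proposition \ref{étalepwprop} via the natural isomorphism $(G^{\RP})^\vee\times_\kappa\Spec\kappa\sep\cong((G\times_\kappa\Spec\kappa\sep)^{\RP})^\vee$, justified exactly as you propose by the weak \'etaleness of $\Spec\kappa\sep\to\Spec\kappa$ (hence relative perfectness and functoriality of the relatively perfect site), together with \cite[Proposition 6.7]{Ros} on the permawound side. For the second claim the paper also relies on the duality functor and biduality, composing with Pontryagin duality on the \'etale side to convert the contravariant equivalence into the stated covariant one --- the only point your parenthetical ``covariantly by biduality'' glosses over, and a purely formal one.
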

\begin{proof}
We have a natural isomorphism $$(G^{\RP})^\vee \times_\kappa \Spec \kappa\sep = ((G\times_\kappa\Spec \kappa\sep)^{\RP})^{\vee}$$ because the morphism $\Spec \kappa\sep \to \Spec \kappa$ is weakly étale. Whether a smooth wound unipotent algebraic group $G$ is weakly permawound can be checked after base change to $\kappa\sep$ \cite[Proposition 6.7]{Ros}; the same is evidently true for the étaleness of $(G^{\RP})^\vee.$ Hence the first part follows from Proposition \ref{étalepwprop}. The second part follows from the first and Pontryagin duality.
\end{proof} \\
\\
$\mathbf{Remark.}$ If $[\kappa:\kappa^p]=p,$ then a smooth connected wound unipotent algebraic $\kappa$-group $G$ is unirational if and only if it is weakly permawound \cite[Propositions 9.6 and 9.7]{Ros2}. In particular, we recover \cite[Theorem 10.5]{OS}; the proof carries over \it verbatim. \rm
\section{Non-existence of Néron lft-models}
From now on, we shall let $S$ be an excellent Dedekind scheme over $\F_p$ for some prime number $p$ such that $\Omega^1_S$ is locally free of some rank $r\in \N.$ Let $K$ be the field of rational functions on $S.$ The goal of this section is to show that the wound unipotent algebraic groups $\boldsymbol{\nu}_1(2)_K,..., \boldsymbol{\nu}_1(r)_K$ do not admit Néron lft-models, and are therefore counterexamples to Conjecture I. The case of $\boldsymbol{\nu}_1(r)_K$ is already known \cite[Theorem 8.1]{OS}; this was deduced in \it loc. cit. \rm by means of explicit equations and a (somewhat intricate) calculation involving discrete valuations. The proof we shall give here instead relies on Shiho's results \cite{Shi} on cohomological purity and is new even in this case. We shall begin by establishing that several algebraic groups needed in this article are unirational. Note that, for each $n\in \N,$ the $K$-group scheme $(\mathbf{G}_{\mathrm{m}}^{(1/p^n)}/\Gm)^{\RP}$ represents the sheaf $\nu_n(1)_K$ on $K_{\RP}.$ 
\begin{proposition}
Let $G$ be a smooth algebraic group over a field $\kappa$ satisfying $[\kappa:\kappa^p]<\infty$ such that $G^{\RP}$ represents one of the sheaves $\nu_n(1)_\kappa,$..., $\nu_n(r)_\kappa$ on $\kappa_{\RP}$ for some $n\in \N.$ Then $G$ is unirational (and in particular connected). \label{unirprop}
\end{proposition}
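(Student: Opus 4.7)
The plan is to produce, for the given $G,$ a scheme-theoretically dominant morphism from a connected open subscheme of an affine space; this will immediately imply that $G$ is both connected and unirational. For each positive integer $M,$ consider the morphism of sheaves of sets on $\kappa_{\RP}$
\[
\phi_M\colon \mathbf{G}_m^{qM} \longrightarrow \nu_n(q)_\kappa,\qquad ((s^i_j)_{i,j}) \longmapsto \sum_{i=1}^M \rmd\log \underline{s^i_1}\wedge\cdots\wedge\rmd\log \underline{s^i_q}.
\]
By hypothesis $G^{\RP}$ represents $\nu_n(q)_\kappa,$ so $\phi_M$ corresponds by Yoneda to a morphism of relatively perfect $\kappa$-schemes $(\mathbf{G}_m^{qM})^{\RP}\to G^{\RP},$ and the adjunction recalled in Section 2.1 (applied with $X=\mathbf{G}_m^{qM}$ and $Y=G$) shows that this arises from a morphism of finite-type $\kappa$-schemes $\phi'_M\colon (\mathbf{G}_m^{qM})^{(1/p^k)}\to G$ for some $k=k(M)\in\mathbf{N}_0.$ Note that $\phi'_M$ sends the identity element of its source to the identity of $G,$ since $\phi_M$ sends the all-ones section of $\mathbf{G}_m^{qM}$ to $\sum\rmd\log\underline{1}\wedge\cdots\wedge\rmd\log\underline{1}=0.$

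The key step is to show that $\phi'_M$ is scheme-theoretically dominant for some $M.$ Fix an irreducible component $T$ of $G^{\RP}$ and let $L$ be its function field (an object of $\kappa_{\RP}$ because $G^{\RP}$ is relatively perfect over $\kappa$). The generic point of $T$ yields an element $\bar\eta\in G^{\RP}(\Spec L)=\nu_n(q)_\kappa(L),$ and since $\nu_n(q)_\kappa$ is by construction the sheaf image of the $d\log$ morphism $\mathbf{G}_m^{\otimes q}\to W_n\Omega^q,$ there exist a finite separable extension $L'/L,$ an integer $N_0,$ and elements $s^i_j\in(L')^\times$ satisfying $\bar\eta|_{L'}=\sum_{i=1}^{N_0}\rmd\log\underline{s^i_1}\wedge\cdots\wedge\rmd\log\underline{s^i_q}.$ This produces a morphism $\Spec L'\to\mathbf{G}_m^{qN_0}$ whose composition with $\phi_{N_0}$ lifts the generic point of $T,$ so that $(\mathbf{G}_m^{qN_0})^{\RP}\to G^{\RP}$ has dense image in $T.$ Because the canonical morphisms $((\mathbf{G}_m^{qN_0})^{(1/p^k)})^{\RP}\to(\mathbf{G}_m^{qN_0})^{(1/p^k)}$ and $G^{\RP}\to G$ are surjective on underlying topological spaces (as projective limits of faithfully flat finite morphisms), a diagram chase implies that $\phi'_{N_0}$ has dense image in the irreducible component of $G$ lying below $T.$ That component contains the identity $0\in G$ (since the connected image of $\phi'_{N_0}$ contains $0$ and is dense in the component), hence equals the smooth irreducible subgroup $G^0;$ varying $T,$ every component of $G^{\RP}$ lies over $G^0,$ and since $G^{\RP}\to G$ is surjective this forces $G=G^0,$ i.e., $G$ is connected.

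Finally, $(\mathbf{G}_m^{qN_0})^{(1/p^k)}$ is obtained from $\mathbf{G}_m^{qN_0}$ by Weil restriction along the finite flat morphism $F_\kappa^k,$ and is therefore an open subscheme of $\mathbf{A}^{qN_0 p^{kr}}_\kappa;$ in particular it is unirational, and so is $G$ via the dominant $\phi'_{N_0}.$ The main technical obstacle is the dominance step, which requires a careful diagram chase through the adjunction and the universal homeomorphisms connecting the relevant relative perfections and Frobenius twists; a reassuring sanity check is the case $q=1,$ where $\nu_n(1)_\kappa$ is represented by the relative perfection of the smooth unirational group $\mathbf{G}_m^{(1/p^n)}/\mathbf{G}_m,$ itself a quotient of the open subscheme $\mathbf{G}_m^{(1/p^n)}$ of $\mathbf{A}^{p^{nr}}_\kappa.$
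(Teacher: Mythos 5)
Your proof is essentially correct, but it reaches the conclusion by a genuinely different route than the paper. The paper first reduces to separably closed $\kappa$ via [Ros2, Theorem 7.3], uses the product map $\nu_n(1)_\kappa^{\oplus q}\to \nu_n(q)_\kappa$ together with Hilbert's Theorem 90 to produce finitely many morphisms from opens of affine spaces whose images \emph{generate the group} $G(\kappa)$, and then concludes by combining density of $\kappa$-points with [BLR, Chapter 10.3, Theorem 2], which lets rational maps $\Ps^1_\kappa\dashrightarrow G$ factor through unirational subgroups. You instead exhibit a single scheme-theoretically dominant morphism from an open subscheme of an affine space, by expressing the generic point of each component of $G^{\RP}$ --- a section of the image sheaf $\nu_n(q)_\kappa$ over its relatively perfect function field --- as a finite sum of elementary $\rmd\log$-terms after a finite separable extension. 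This bypasses both the reduction to $\kappa\sep$ and the factorization theorem for rational curves, and yields connectedness as an immediate by-product; the cost is that the deferred ``diagram chase'' carries real content. Two points there need to be supplied: first, the transition maps $g$ in the tower defining $(-)^{\RP}$ are affine and, on smooth schemes, faithfully flat and surjective, but they are \emph{not} finite, so surjectivity of $G^{\RP}\to G$ should instead be deduced from formal smoothness (infinitesimal lifting along the universal homeomorphism $T\to T^{(p)}$) together with a standard nonemptiness result for cofiltered limits of nonempty affine schemes; second, you must know that the generic point of a component $T$ of $G^{\RP}$ maps to the generic point of a component of $G$ (equivalently, that every component of $G$ is dominated by one of $G^{\RP}$) --- this follows from flatness of $G^{\RP}\to G$, or from surjectivity plus a specialization argument, but not from surjectivity of the vertical arrows alone as written. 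With these details filled in, your argument is complete and arguably more self-contained than the paper's.
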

\begin{proof}
By \cite[Theorem 7.3]{Ros2}, we may assume that $\kappa$ is separably closed. Let $1\leq q \leq r.$ Consider the $\Z$-multilinear map $\xi_0\colon \nu_n(1)_\kappa\oplus ... \oplus \nu_n(1)_\kappa \to \nu_n(q)_\kappa$ of sheaves on $\kappa_{\RP},$ where the direct sum on the left hand side consists of $q$ summands and the map is induced by the product on the sheaf of graded algebras $\nu_n(0)_\kappa \oplus ... \oplus \nu_n(r)_\kappa.$ By construction of $\nu_n(q)_\kappa,$ the image of $\xi_0(\kappa)$ generates $\nu_n(q)_\kappa(\kappa).$ In particular, we obtain a $\Z$-multilinear map 
\begin{align}\big(\mathbf{G}_{\mathrm{m}}^{(1/p^n)}/\Gm\big)^{\RP} \times_\kappa ... \times_\kappa \big(\mathbf{G}_{\mathrm{m}}^{(1/p^n)}/\Gm\big)^{\RP} \to G^{\RP},\label{map1}\end{align} which comes from a morphism of schemes
\begin{align}\xi \colon \mathbf{G}_{\mathrm{m}}^{(1/p^{n+j})}/\mathbf{G}_{\mathrm{m}}^{(1/p^{j})} \times_\kappa ... \times_\kappa \mathbf{G}_{\mathrm{m}}^{(1/p^{n+j})}/\mathbf{G}_{\mathrm{m}}^{(1/p^j)} \to G\label{map2}\end{align}
for some $j\gg 0.$ Note that the morphisms (\ref{map1}) and (\ref{map2}) restrict to the same map of sheaves on the small étale site of $\Spec \kappa.$ In particular, the image of $\xi(\kappa)$ generates $G(\kappa).$ Since $\mathbf{G}_{\mathrm{m}}^{(1/p^{n+j})}$ is an open subset of an affine space over $\kappa,$ we can now construct a morphism of schemes $\xi' \colon U \to G$ over $\kappa$ such that $U$ is an open subset of some affine space and such that the image of $\xi'(\kappa)$ generates $G(\kappa).$ Here we use that $H^1(\kappa, \mathbf{G}_{\mathrm{m}}^{(1/p^j)})=0$ by Hilbert's Theorem 90. This shows that $G$ is generated by the image of finitely many rational maps $\Ps^1_\kappa \dashrightarrow G.$ By \cite[Chapter 10.3, Theorem 2]{BLR}, each such map factors as $\Ps^1_\kappa \dashrightarrow R \to G,$ where $R$ is a unirational group over $\kappa$ and the map $R\to G$ is an homomorphism. The induced homomorphism from the product of all such $R$ is necessarily surjective, so $G$ is unirational. 
\end{proof}

We shall now show that the wound unipotent algebraic groups $\boldsymbol{\nu}_1(2)_K,..., \boldsymbol{\nu}_1(r)_K$ do not admit Néron lft-models. For a sheaf $\mathscr{F}$ on the small étale site of $S$ and a closed subscheme $D\subseteq S,$ we shall denote by $\mathscr{H}^i_D(S, \mathscr{F})$ the local cohomology sheaves of $\mathscr{F}$ with support in $D;$ cf. \cite[Tag 09XP]{Stacks}. 

\begin{lemma}
Let $D\subset S$ be a reduced closed subscheme of codimension 1. Let $U:=S\backslash D$ and let $\iota_D\colon D\to S$ and $j_U\colon U\to S$ be the canonical immersions. Then we have an exact sequence \label{Xexactlem}
$$0 \to \nu_1(q)_S \to j_{U \ast}\nu_1(q)_U \to \iota_{D\ast}\nu_1(q-1)_D\to 0$$ on the small étale site of $S$ for all $q\geq 1.$ 
\end{lemma}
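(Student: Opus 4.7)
The strategy is to construct an explicit residue map, and then reduce the middle-exactness to Shiho's cohomological purity for the sheaves $\nu_1(q)$.

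First I would construct a residue morphism $\rho\colon j_{U\ast}\nu_1(q)_U \to \iota_{D\ast}\nu_1(q-1)_D$. Since $D$ is a reduced divisor, étale-locally on $S$ we can pick a uniformiser $\pi$ cutting out $D.$ A section of $\nu_1(q)_U$ is étale-locally a $\Z$-linear combination of symbols $\rmd\log s_1\wedge\cdots\wedge \rmd\log s_q$ with $s_i\in \Og_U^\times$; near $D$ we write $s_i=\pi^{a_i}u_i$ with $u_i\in\Og_S^\times,$ and define
$$\rho(\rmd\log s_1\wedge\cdots\wedge\rmd\log s_q)=\sum_{i=1}^{q}(-1)^{i-1}a_i\,\rmd\log \overline{u}_1\wedge\cdots\widehat{\rmd\log \overline{u}_i}\cdots\wedge \rmd\log \overline{u}_q,$$
where $\overline{u}_j$ denotes the restriction of $u_j$ to $D.$ Because any other uniformiser differs by a unit of $\Og_S$ on a suitable étale neighbourhood, the usual tame-symbol computation shows that $\rho$ is independent of the choice of $\pi$ and so glues to a morphism of sheaves on the small étale site of $S.$ Surjectivity of $\rho$ is immediate: a local section $\rmd\log v_1\wedge\cdots\wedge \rmd\log v_{q-1}$ of $\nu_1(q-1)_D$ can be lifted to units $u_i\in\Og_S^\times$ on an étale neighbourhood (lifting units along $\Og_S\twoheadrightarrow \iota_{D\ast}\Og_D$ is possible since $\Gm$ is smooth), and then $\rho(\rmd\log\pi\wedge \rmd\log u_1\wedge\cdots\wedge \rmd\log u_{q-1})$ equals the given section.

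Injectivity of $\nu_1(q)_S\to j_{U\ast}\nu_1(q)_U$ is essentially free: the sheaf $\nu_1(q)_S$ is a subsheaf of the locally free $\Og_S$-module $\Omega^q_S,$ which is torsion-free and hence injects into $j_{U\ast}j_U^\ast\Omega^q_S = j_{U\ast}\Omega^q_U.$ The composite $\nu_1(q)_S\to j_{U\ast}\nu_1(q)_U\to \iota_{D\ast}\nu_1(q-1)_D$ vanishes because the local generators $\rmd\log u_1\wedge\cdots\wedge \rmd\log u_q$ with $u_i\in\Og_S^\times$ have trivial tame symbol.

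The essential remaining point is that $\ker\rho$ is no larger than $\nu_1(q)_S.$ I would deduce this from Shiho's cohomological purity for the sheaves $\nu_1(q)$ on the small étale site of a regular excellent $\F_p$-scheme, which computes the local cohomology along $D\subset S$ as
$$\mathscr{H}^0_D(S,\nu_1(q)_S)=0,\qquad \mathscr{H}^1_D(S,\nu_1(q)_S)\cong \iota_{D\ast}\nu_1(q-1)_D,$$
the latter identification being effected by the residue map constructed above. Feeding this into the local cohomology long exact sequence
$$0\to \mathscr{H}^0_D(\nu_1(q)_S)\to \nu_1(q)_S\to j_{U\ast}\nu_1(q)_U\xrightarrow{\rho}\mathscr{H}^1_D(\nu_1(q)_S)\to R^1j_{U\ast}\nu_1(q)_U$$
and combining it with the surjectivity of $\rho$ already shown (which forces the connecting map $\mathscr{H}^1_D(\nu_1(q)_S)\to R^1 j_{U\ast}\nu_1(q)_U$ to vanish on the image of $\rho,$ hence everywhere), gives the desired four-term exact sequence.

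The main obstacle is the purity identification of $\mathscr{H}^1_D(S,\nu_1(q)_S)$ with $\iota_{D\ast}\nu_1(q-1)_D$; everything else is either a direct calculation with $\rmd\log$-symbols or a formal consequence of the local cohomology triangle. In practice this purity step is the content of Shiho's results cited in \cite{Shi}, and the proof reduces by standard limit arguments to the case of a strict Henselisation of $S$ at a closed point, where it becomes a statement about logarithmic differentials on a strictly Henselian excellent discrete valuation ring.
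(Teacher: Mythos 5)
Your proposal follows essentially the same route as the paper's proof: the four-term local cohomology exact sequence of \cite[Tag 0A45]{Stacks} combined with Shiho's purity isomorphism $\nu_1(q-1)_D \cong \mathscr{H}^1_D(S,\nu_1(q)_S)$ (which, as in the paper, is cited rather than reproved, and is indeed realised by the residue map you write down). The one slip is in your displayed long exact sequence: since $\nu_1(q)_S$ sits in degree $0$, the correct sequence is $\cdots \to j_{U\ast}\nu_1(q)_U \to \mathscr{H}^1_D(S,\nu_1(q)_S) \to 0$, with $R^1 j_{U\ast}$ entering only via $R^1 j_{U\ast}\nu_1(q)_U \cong \mathscr{H}^2_D(S,\nu_1(q)_S)$, so surjectivity onto $\mathscr{H}^1_D$ is automatic and your explicit lifting argument, while a harmless sanity check, is not needed.
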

\begin{proof}
We have an exact sequence
$$0  \to \nu_1(q)_S \to j_{U \ast}\nu_1(q)_U \to \iota_{D\ast} \mathscr{H}^1_D(S, \nu_1(q)_S)\to 0$$
by \cite[Tag 0A45]{Stacks}. Now note that $S$ is regular and that $D$ is regular of codimension 1 in $S.$ In particular, the map
$$\rho^{q, \log}_{\iota_D, 1} \colon \nu_1(q-1)_D \to \mathscr{H}^1_D(S, \nu_1(q)_S)$$ constructed in \cite[p. 590]{Shi} is an isomorphism by the cohomological purity of logarithmic Hodge-Witt sheaves \cite[p. 591, Step 1 in the proof of Theorem 3.2]{Shi}.
\end{proof}
\begin{corollary}
Let $j\colon \Spec K \to S$ be the canonical map and let $| S |$ be the set of closed points of $S.$ Then we have an exact sequence
$$0 \to \nu_1(q)_S \to j_\ast \nu_1(q)_K \to \bigoplus_{x\in | S |} \iota_{x\ast} \nu_1(q-1)_{\kappa(x)} \to 0$$
on the small étale site of $S$ for all $q\geq 1.$ \label{smexactseq}
\end{corollary}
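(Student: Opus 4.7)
The plan is to deduce this from Lemma \ref{Xexactlem} by taking a filtered colimit over finite sets of closed points of $S.$ Since $S$ is Dedekind, the reduced closed subschemes of codimension $1$ are precisely the finite disjoint unions of closed points. For each finite subset $\Sigma \subseteq |S|,$ write $D_\Sigma = \coprod_{x\in \Sigma} \Spec \kappa(x),$ viewed as a closed subscheme of $S,$ and $U_\Sigma := S \backslash \Sigma$ with open immersion $j_\Sigma \colon U_\Sigma \to S.$ Applying Lemma \ref{Xexactlem} to $D_\Sigma$ yields a short exact sequence
$$0 \to \nu_1(q)_S \to j_{\Sigma\ast} \nu_1(q)_{U_\Sigma} \to \bigoplus_{x\in \Sigma} \iota_{x\ast} \nu_1(q-1)_{\kappa(x)} \to 0$$
on the small étale site of $S.$

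Next I would pass to the filtered colimit over the poset of all finite subsets $\Sigma \subseteq |S|$ ordered by inclusion. As filtered colimits of Abelian sheaves are exact and commute with the direct sum on the right, this produces an exact sequence
$$0 \to \nu_1(q)_S \to \varinjlim_\Sigma j_{\Sigma\ast} \nu_1(q)_{U_\Sigma} \to \bigoplus_{x\in |S|} \iota_{x\ast} \nu_1(q-1)_{\kappa(x)} \to 0.$$
The remaining task is to identify the middle term with $j_\ast \nu_1(q)_K.$ For a quasi-compact étale morphism $V \to S,$ we have $V_K = \varprojlim_\Sigma V\times_S U_\Sigma,$ a filtered inverse limit of quasi-compact, quasi-separated schemes with affine transition maps. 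Since $\nu_1(q)$ is by construction the étale subsheaf of the coherent sheaf $\Omega^q_{-/\F_p}$ generated by sections of the form $\rmd\log \underline{s_1} \wedge \dots \wedge \rmd\log \underline{s_q},$ standard compatibility of quasi-coherent cohomology with such inverse limits yields
$$\nu_1(q)_K(V_K) = \varinjlim_\Sigma \nu_1(q)_{U_\Sigma}(V\times_S U_\Sigma),$$
and hence $\varinjlim_\Sigma j_{\Sigma \ast} \nu_1(q)_{U_\Sigma} = j_\ast \nu_1(q)_K$ after sheafification.

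The most delicate step I anticipate is the last identification, i.e.\ checking that $\nu_1(q)$ genuinely commutes with the relevant filtered inverse limits of schemes. The issue is that $\nu_1(q)$ is defined as the image of a map of étale sheaves, not as a representable functor on the small étale site of $S$ directly, so I need to verify that a section over the generic fibre extends to some $V \times_S U_\Sigma$ after possibly enlarging $\Sigma$. This reduces to the corresponding statement for $\Omega^q,$ which is quasi-coherent (in fact locally free) and hence well-behaved under filtered limits of affine schemes, together with the fact that the $\rmd\log$ symbols generating $\nu_1(q)$ are stable under restriction and extension through such transition maps.
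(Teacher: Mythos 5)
Your argument is exactly the paper's: apply Lemma \ref{Xexactlem} to finite reduced sets of closed points and pass to the filtered colimit, using exactness of filtered colimits of étale sheaves. The only difference is that you spell out the identification $\varinjlim_\Sigma j_{\Sigma\ast}\nu_1(q)_{U_\Sigma} = j_\ast\nu_1(q)_K$, which the paper leaves implicit; your justification is sound (and could be streamlined by noting that on the small étale site $\nu_1(q)$ is represented by the finite-presentation scheme $\boldsymbol{\nu}_1(q)$, so the standard limit formalism applies directly).
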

\begin{proof}
Let $D\subseteq |S|$ be a finite subset, which we endow with the reduced subscheme structure. For an open subset $U\subset S$ and $x\in |S|,$ let $j_U\colon U\to S$ and $\iota_x \colon \Spec \kappa(x)\to S$ be the canonical immersions. By Lemma \ref{Xexactlem}, we have an exact sequence
$$0 \to \nu_1(q)_S \to j_{(S\backslash D)\ast} \nu_1(q)_{S\backslash D} \to \bigoplus_{x\in D} \iota_{x\ast} \nu_1(q-1)_{\kappa(x)} \to 0$$ on the small étale site of $S.$ Taking the inductive limit over all such $D$ (ordered by inclusion) gives the Corollary.
\end{proof}

\begin{theorem}
Assume $r>1$ and that $S$ is global (i. e. that $\#|S|=\infty$). Then, for any $2\leq q \leq r,$ the smooth wound unipotent $K$-group $\boldsymbol{\nu}_1(q)_K$ does not admit a Néron lft-model over $S.$ \label{Nétnonexthm}
\end{theorem}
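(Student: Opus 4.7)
$\emph{Proof plan.}$ Assume for contradiction that a Néron lft-model $\mathscr{G}\to S$ of $\boldsymbol{\nu}_1(q)_K$ exists. The plan is to compare $\mathscr{G}$ to the naïve smooth $S$-model $\boldsymbol{\nu}_1(q)_S$ and derive a contradiction from Corollary~\ref{smexactseq} applied fibrewise.

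First, I would identify $\mathscr{G}$ as an étale sheaf. For any étale $T\to S$, the Néron mapping property gives $\mathscr{G}(T)=\boldsymbol{\nu}_1(q)_K(T_K)$, and the adjunction used in the proof of Proposition~\ref{representprop} (applied to $T_K$, which is étale, hence relatively perfect, over $K$) identifies this with $\nu_1(q)_K(T_K)$. Hence $\mathscr{G}|_{S_{\et}}=j_\ast\nu_1(q)_K$ and, by the same adjunction, $\boldsymbol{\nu}_1(q)_S|_{S_{\et}}=\nu_1(q)_S$. Substituting into Corollary~\ref{smexactseq} yields an exact sequence of étale sheaves on $S$
\[
0\to \boldsymbol{\nu}_1(q)_S\to \mathscr{G}\to \bigoplus_{x\in|S|}\iota_{x\ast}\nu_1(q-1)_{\kappa(x)}\to 0,
\]
whose first arrow arises via Yoneda from a homomorphism of smooth $S$-group schemes extending the identity on the generic fibre.

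Next, I would analyse this sequence fibrewise at a closed point $x\in|S|$. Both $\boldsymbol{\nu}_1(q)_S$ and $\mathscr{G}$ are smooth over $S$ of relative dimension $d=\binom{r-1}{q-1}(p^r-1)$ (using Proposition~\ref{dimprop} for the generic-fibre dimension and constancy of relative dimension for the smooth $\mathscr{G}$). The closed fibre $N_x:=(\boldsymbol{\nu}_1(q)_S)_x$ is a smooth connected $\kappa(x)$-group of dimension $d$, and $\mathscr{G}_x$ is smooth equidimensional of the same dimension. Connectedness together with the dimension count forces the image of $N_x$ in $\mathscr{G}_x$ to coincide with the identity component $\mathscr{G}_x^0$, so that pulling the above exact sequence back along $\iota_x$ produces an identification of étale sheaves on $\kappa(x)_{\et}$
\[
\Phi_x \;\cong\; \nu_1(q-1)_{\kappa(x)},
\]
where $\Phi_x:=\mathscr{G}_x/\mathscr{G}_x^0$ is the component group, an étale $\kappa(x)$-group scheme.

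Finally, I would extract the contradiction. Via Proposition~\ref{representprop} and the same adjunction, the right-hand side is represented on $\kappa(x)_{\et}$ by the smooth connected $\kappa(x)$-group scheme $\boldsymbol{\nu}_1(q-1)_{\kappa(x)}$, which for $2\le q\le r$ has positive dimension $\binom{r-2}{q-2}(p^{r-1}-1)>0$ and is unirational by Proposition~\ref{unirprop}. Thus the $0$-dimensional étale $\Phi_x$ and the positive-dimensional smooth $\boldsymbol{\nu}_1(q-1)_{\kappa(x)}$ would represent the same étale sheaf on $\kappa(x)_{\et}$. I expect the main technical obstacle to be converting this sheaf-theoretic coincidence into a genuine scheme-theoretic contradiction: the route I would pursue uses the unirationality of $\boldsymbol{\nu}_1(q-1)_{\kappa(x)}$ to construct a dominant morphism from an open subscheme of some $\mathbf{A}^n_{\kappa(x)}$, whose resulting wealth of geometric points cannot be accommodated by the finite separably-split geometric fibres of an étale $\kappa(x)$-scheme, thus producing the desired contradiction.
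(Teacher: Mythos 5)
Your overall strategy matches the paper's: use the N\'eron mapping property to identify $\mathscr{G}$ with $j_\ast\nu_1(q)_K$ on the small \'etale site, plug this into Corollary~\ref{smexactseq}, and contradict the positive-dimensionality of $\boldsymbol{\nu}_1(q-1)_{\kappa(x)}$ (Proposition~\ref{dimprop} together with $[\kappa(x):\kappa(x)^p]=p^{r-1}$). However, the decisive final step fails as you have set it up. Your contradiction rests on the claim that an \'etale $\kappa(x)$-group scheme has only finitely many $\kappa(x)\sep$-points. That is false unless the scheme is quasi-compact, and component groups of N\'eron \emph{lft}-models are precisely the standard examples of \'etale group schemes which are not finite: the component group of the N\'eron lft-model of $\Gm$ at a closed point is $\Z$. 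So the ``wealth of geometric points'' of the unirational group $\boldsymbol{\nu}_1(q-1)_{\kappa(x)}$ is not, by itself, in tension with $\Phi_x$ being \'etale, and no contradiction is reached. The missing ingredient, which the paper uses, is that the component group of a N\'eron lft-model is \emph{finitely generated} \cite[Proposition 3.5]{HN}; since $\boldsymbol{\nu}_1(q-1)_{\kappa(x)}(\kappa(x)\sep)$ is an infinite $p$-torsion abelian group (the group is smooth of positive dimension and killed by $p$), it cannot be finitely generated. Unirationality plays no role here.

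There is a second gap earlier in the argument: you assert that the image of $N_x=(\boldsymbol{\nu}_1(q)_S)_x$ in $\mathscr{G}_x$ equals $\mathscr{G}_x^0$ ``by connectedness and the dimension count.'' This does not follow, because the homomorphism $\boldsymbol{\nu}_1(q)_S\to\mathscr{G}$ is only known to be an isomorphism on generic fibres, and its kernel may have positive-dimensional special fibres; over the imperfect fields $\kappa(x)$ there exist positive-dimensional totally non-smooth subgroup schemes, so injectivity on \'etale-local sections does not force the fibrewise kernel to be finite. The paper repairs this by observing that $\Lie\boldsymbol{\nu}_1(q)_S\to\Lie\mathscr{G}$ is an isomorphism over $K$, hence over a dense open subset of $S$ (which still has infinitely many closed points); after shrinking, the map is \'etale with open image and its fppf cokernel is an \'etale algebraic space, whose stalks are then related to the component groups via commutation with filtered limits. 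Relatedly, note that pulling the small-\'etale exact sequence back along $\iota_x$ computes stalks such as $\mathscr{G}(\Og_{S,x}^{\mathrm{sh}})$, not the fibre $\mathscr{G}_x(\kappa(x)\sep)$ directly, so your identification $\Phi_x\cong\nu_1(q-1)_{\kappa(x)}$ also needs this intermediate step.
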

\begin{proof}
Assume for the sake of a contradiction that $\mathscr{N}\to S$ is a Néron lft-model of $\boldsymbol{\nu}_1(q)_K.$ Then we obtain an exact sequence
$$0 \to \boldsymbol{\nu}_1(q)_S \to \mathscr{N} \to \bigoplus_{x\in |S|} \iota_{x\ast} \boldsymbol{\nu}_1(q-1)_{\kappa(x)} \to 0$$ on the small étale site of $S.$ Since the induced morphism $\Lie \boldsymbol{\nu}_1(q)_S \to \Lie \mathscr{N}$ is an isomorphism over $K,$ we may assume that it is an isomorphism over all of $S$ after replacing $S$ by a dense open subset if necessary (note that the condition $\#|S|=\infty$ is unaffected). This means that the morphism $\boldsymbol{\nu}_1(q)_S \to \mathscr{N}$ is étale, so its image is open in $\mathscr{N}.$ Therefore, the cokernel $\mathscr{Q}$ of the map $\boldsymbol{\nu}_1(q)_S \to \mathscr{N}$ (taken on the big fppf-site of $S$) is representable by an algebraic space étale over $S$ by \cite[Chapter 8.4, Proposition 9]{BLR}. In particular, $\mathscr{Q}$ commutes with filtered inverse limits of affine schemes \cite[Tag 04AK]{Stacks}. Because the group of connected components of the Néron lft-model of a smooth algebraic group is finitely generated \cite[Proposition 3.5]{HN}, it follows that $\mathscr{Q}_{\overline{x}} = \boldsymbol{\nu}_1(q-1)_{\kappa(x)}(\kappa(x)\sep)$ is finitely generated for all $x\in |S|$ and all geometric points $\overline{x}\colon\Spec \kappa(x)\sep \to S$ mapping to $x.$ However, this implies that $\boldsymbol{\nu}_1(q-1)_{\kappa(x)}$ is étale over $\kappa(x).$ Using that $[\kappa(x):\kappa(x)^p]=p^{r-1}$ \cite[Lemma 2.1]{OvChai}, this contradicts Proposition \ref{dimprop}.
\end{proof}
\begin{corollary}
Suppose that $r>1$ and let $G \not=0$ be a smooth wound unipotent weakly permawound algebraic group over $K.$ Then $G$ does not admit a Néron lft-model over $S.$ 
\end{corollary}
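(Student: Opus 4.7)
The plan is to mimic the proof of Theorem~\ref{Nétnonexthm}, using Proposition~\ref{étalepwprop} to reduce the case of an arbitrary weakly permawound $G$ to the case $G = \boldsymbol{\nu}_1(r)_K$ already handled there. The target is to exhibit, at infinitely many closed points $x \in |S|$, a positive-dimensional piece of the ``residue'' of $G$ at $x$—a twisted form of $\boldsymbol{\nu}_1(r-1)_{\kappa(x)}^{\RP}$—contradicting the finite generation of geometric component groups of local Néron lft-models guaranteed by \cite[Proposition~3.5]{HN}.

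Assuming for contradiction that a Néron lft-model $\mathscr{N} \to S$ exists, I first repeat the initial shrinking from the proof of Theorem~\ref{Nétnonexthm}: $G$ extends to a smooth group scheme $\mathscr{G}_0 \to S$ over a dense open (still global), and by comparing Lie algebras and shrinking $S$ further, the canonical morphism $\mathscr{G}_0 \to \mathscr{N}$ is étale. Hence the fppf-cokernel $\mathscr{Q} = \mathscr{N}/\mathscr{G}_0$ is representable by an étale algebraic space over $S$ supported on the closed points, and each geometric fibre $\mathscr{Q}_{\overline{x}}$ is a finitely generated abelian group.

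For the main step I would invoke Proposition~\ref{étalepwprop} applied over $K\sep$: provided $G^{\RP} \neq 0$, the group $G^{\RP}_{K\sep}$ admits a nonzero quotient isomorphic to $\boldsymbol{\nu}_1(r)_{K\sep}^{\RP}$. By Galois descent, this quotient is already defined over some finite separable extension $K'/K$, giving a surjection $G^{\RP}_{K'} \twoheadrightarrow \boldsymbol{\nu}_1(r)^{\RP}_{K'}$; the associated finite étale cover $S'\to S$ is again global, so the setup passes unchanged to $S'$. Combining this surjection with the exact sequence of Corollary~\ref{smexactseq} for $q = r$ over $S'$, I expect to exhibit, at each closed $x \in |S'|$, a subquotient of $\mathscr{Q}_{\overline{x}}^{\RP}$ isomorphic to $\boldsymbol{\nu}_1(r-1)_{\kappa(x)}^{\RP}$. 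Since $r \geq 2$, Proposition~\ref{dimprop} gives $\dim\boldsymbol{\nu}_1(r-1)_{\kappa(x)} = p^{r-1}-1 > 0$, so $\mathscr{Q}_{\overline{x}}$ cannot be étale of finite type—the sought contradiction.

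The residual case $G^{\RP} = 0$ (equivalently, $G$ is geometrically an iterated extension of copies of $N_0$) lies outside the reach of this argument because such groups are invisible on $S_{\RP}$ by Proposition~\ref{RPRprop}(ii). Here I would argue separately by a direct component-group calculation based on the defining sequence $0 \to N_0 \to \mathbf{G}_a^{(1/p)} \to \mathbf{G}_a \to 0$, using that $\mathbf{G}_a^{(1/p)}_K$ is a vector group containing $\mathbf{G}_a$-subgroups and therefore admits no Néron lft-model itself. The main obstacles I anticipate are (i) the rigorous identification of the subquotient of $\mathscr{Q}_{\overline{x}}^{\RP}$ as a twist of $\boldsymbol{\nu}_1(r-1)_{\kappa(x)}^{\RP}$, which requires commuting the relevant local-cohomology sequence with the duality of Section~2 and descending along $S'/S$, and (ii) the treatment of this $N_0$-iterated edge case.
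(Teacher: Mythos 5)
Your overall strategy --- redo the residue computation of Theorem~\ref{Nétnonexthm} for $G$ itself by transporting it along a surjection $G^{\RP}\twoheadrightarrow \boldsymbol{\nu}_1(r)_{K'}^{\RP}$ --- has a genuine gap exactly at the step you flag as obstacle (i), and that step is the entire content of the proof rather than a technicality. Corollary~\ref{smexactseq} is a statement about the specific sheaves $\nu_1(q)$, proved via Shiho's purity theorem; no analogous sequence is available for an arbitrary weakly permawound $G$. Nor can you deduce it from the quotient map: a surjection of generic fibres gives essentially no control over the component groups of Néron lft-models, since the induced morphism of (local) Néron lft-models need not be surjective on sections over strictly henselian local rings --- see the paper's own remark following Proposition~\ref{Dévprop}, where $[n]\colon \Gm\to\Gm$ induces multiplication by $n$ on the component group $\Z$. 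So you cannot conclude that $\mathscr{Q}_{\overline{x}}$ contains a subquotient of the form $\boldsymbol{\nu}_1(r-1)_{\kappa(x)}(\kappa(x)\sep)$. The paper exploits the \emph{other} end of the filtration of Proposition~\ref{étalepwprop}(ii): after an étale base change $S_L\to S$ (harmless since Néron lft-models commute with étale base change and $S_L$ remains global), there is a closed immersion $\boldsymbol{\nu}_1(r)_K^{\RP}\hookrightarrow G^{\RP}$, namely the bottom graded piece $G_1$. Existence of Néron lft-models descends to closed subgroups (via schematic closure; here one needs the relatively perfect version of this from \cite[Propositions 4.2, 4.5, 4.6]{OS}, since the immersion lives only at the level of relative perfections), so a Néron lft-model for $G$ would yield one for $\boldsymbol{\nu}_1(r)_K$, contradicting Theorem~\ref{Nétnonexthm} directly. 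Subgroups, not quotients, are the direction in which existence transfers, and this is what lets the paper avoid any new local-cohomology computation.

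Your ``residual case $G^{\RP}=0$'' is vacuous and its proposed treatment is off the mark. A group that is geometrically an iterated extension of copies of $N_0=\boldsymbol{\alpha}_p^{(1/p)}$ is infinitesimal, so a smooth group $G\neq 0$ always satisfies $G^{\RP}\neq 0$; equivalently, at least one graded piece isomorphic to $\boldsymbol{\nu}_1(r)^{\RP}$ must occur in the filtration of Proposition~\ref{étalepwprop}(ii). The separate argument you sketch there --- resting on the fact that $\mathbf{G}_{\mathrm{a}}^{(1/p)}$ is a vector group without a Néron lft-model --- concerns a non-wound group and has no bearing on the wound unipotent $G$ of the statement.
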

\begin{proof}
For a finite separable extension $L$ of $K,$ we denote by $S_L$ the integral closure of $S$ in $L.$ Shrinking $S$ if necessary, we may assume that the morphism $S_L \to S$ is étale. Since Néron lft-models commutes with étale base change, we may therefore assume that there exists a closed immersion $\boldsymbol{\nu}_1(r)_K^{\RP} \to G^{\RP}$ by Proposition \ref{étalepwprop}. In particular, if $G$ did admit a Néron lft-model over $S,$ the same would be true for $\boldsymbol{\nu}_1(r)_K$ by \cite[Propositions 4.2, 4.5, and 4.6]{OS}, and we already know that this is not the case from Theorem \ref{Nétnonexthm}.
\end{proof}

\noindent$\mathbf{Remark}.$ The significance of the preceding result lies in the fact that, over any imperfect field $\kappa$ such that $[\kappa:\kappa^p]=p^r<\infty,$ wound unipotent weakly permawound algebraic groups are ubiquitous; see \cite[Theorem 1.4]{Ros}. In particular, as soon as $r>1,$ counterexamples to Conjecture I are very frequent.  It should also be remarked that the groups $\boldsymbol{\nu}_1(q)_K$ are not weakly permawound for $0 \leq q < r$ by Proposition \ref{étalepwprop} (their duals after relative perfection are $\boldsymbol{\nu}_1(r-q)_K^{\RP},$ which are not étale over $K$ by Proposition \ref{dimprop}). In particular, as soon as $r>2,$ there exist infinite families of counterexamples to Conjecture I which are not weakly permawound. Finally, note that all counterexamples we have constructed in this article are unirational by Proposition \ref{unirprop}.

\section{An elementary proof}
Let $S$ by a Dedekind scheme over $\F_p$ for some prime number $p.$ Let $K$ be the field of rational functions on $S.$ We denote by $r$ the unique positive integer such that $[K:K^p]=p^r$ if $[K:K^p]< \infty$ and put $r=\infty$ otherwise. The condition $r<\infty$ will be used synonymously with the requirement $[K:K^p]<\infty.$ 
We shall first recall the following known definitions and facts to be used later: 
\begin{itemize}
\item If $r<\infty$ and $S$ is \it excellent, \rm then $\Omega^1_S$ is locally free of rank $r$ (\cite[Lemma 2.1]{OvChai} together with \cite[Propositions 2.20 and 2.22]{Shi}). In particular, $F_S$ is finite and locally free of rank $p^r.$
\item Moreover, if $S$ is excellent, then the residue fields of $S$ (at closed points) are perfect if and only if $r=1$ \cite[Corollary 2.2]{OvChai}.
\item In particular, if $S$ is excellent and $r<\infty,$ the algebraic group $\boldsymbol{\nu}_1(1)_K = \mathbf{G}_{\mathrm{m}}^{(1/p)}/\Gm$ admits a Néron lft-model over $S.$ If the Néron lft-model of $\Gm$ over $S$ is denoted by $\mathscr{G}_{\mathrm{m}},$ the Néron lft-model of $\boldsymbol{\nu}_1(1)_K$ is, in fact, isomorphic to $\mathscr{G}_{\mathrm{m}}^{(1/p)} / \mathscr{G}_{\mathrm{m}}.$ See \cite[Lemma 2.9]{OvGR} for both those claims.
\end{itemize}

From now on, we shall assume that $S$ is \it excellent \rm unless explicitly stated otherwise. The goal of this second part of the present article is to give a new and more elementary proof of the following
\begin{theorem}  \rm (Cf. \cite{BLR}, Conjecture I) \it 
Assume that the residue fields of $S$ are perfect (or, equivalently, that $r=1$). Let $G$ be a smooth algebraic group over $K$ such that $\Hom_K(\Ga, G)=0.$ Then $G$ admits a Néron lft-model over $S.$ \label{ExistenceThm}
\end{theorem}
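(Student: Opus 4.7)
My plan is to reduce by devissage to the case of a smooth connected commutative wound unipotent $K$-group, and then construct its Néron lft-model by embedding it into an ambient weakly permawound group whose model can be assembled from explicit pieces. Concretely, the component group $G/G^0$ is finite étale over $K$ and admits a Néron lft-model; using the extension results \cite[Propositions 4.2, 4.5, and 4.6]{OS}, we may assume $G$ is smooth, connected, and commutative. A Chevalley-type decomposition yields a short exact sequence $0 \to L \to G \to A \to 0$ with $A$ an abelian variety (admitting a Néron model classically) and $L$ smooth connected commutative affine, still satisfying $\Hom_K(\Ga, L) = 0$. Peeling off the maximal torus of $L$ (which admits a Néron lft-model classically) and invoking the extension results once more, we reduce to producing a Néron lft-model for a smooth connected commutative wound unipotent $K$-group $U$.

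The crucial input in the $r=1$ setting is the Remark at the end of Section 3, which asserts that wound unipotent weakly permawound $K$-groups are precisely the unirational ones, combined with Proposition \ref{étalepwprop}: after relative perfection, such a weakly permawound group admits a finite filtration with successive quotients isomorphic to $\boldsymbol{\nu}_1(1)_K^{\RP}$. I would then invoke a structural result of Rosengarten \cite{Ros} to realise $U$ as a closed $K$-subgroup of a smooth connected commutative wound unipotent \emph{weakly permawound} $K$-group $\widetilde U$; this Rosengarten-style embedding is what replaces the duality machinery on relatively perfect sites used in \cite{OS}. The filtration of $\widetilde U^{\RP}$ may then be lifted to a filtration of $\widetilde U$ in the smooth category using Proposition \ref{RPRprop} and the smoothness of $\widetilde U$, with successive quotients becoming either finite étale $K$-groups or copies of $\boldsymbol{\nu}_1(1)_K = \mathbf{G}_{\mathrm{m}}^{(1/p)}/\mathbf{G}_{\mathrm{m}}$.

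Each piece of this filtration admits an explicit Néron lft-model: finite étale $K$-groups admit Néron lft-models by classical construction, and the Néron lft-model of $\boldsymbol{\nu}_1(1)_K$ is $\mathscr{G}_{\mathrm{m}}^{(1/p)}/\mathscr{G}_{\mathrm{m}}$, as recalled at the start of Section 5. Iterated application of \cite[Propositions 4.2, 4.5, and 4.6]{OS} yields a Néron lft-model $\widetilde{\mathscr{U}}$ of $\widetilde U$ over $S$, and I would then define the candidate Néron lft-model of $U$ as the Néron smoothening, in the sense of \cite[Chapter 3]{BLR}, of the schematic closure of $U$ inside $\widetilde{\mathscr{U}}$.

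The main obstacle will be verifying the Néron mapping property for this smoothened schematic closure. The natural strategy is to check it one closed point at a time: over $\Og_{S,s}$ the construction should recover the local Néron lft-model of $U$, whose existence is ensured by the local theory \cite[Chapter 10.2, Theorem 1]{BLR}. Controlling the interaction between Néron smoothening and localisation inside a fixed ambient smooth model is the subtle technical point that makes the wound unipotent case non-trivial; a secondary difficulty is ensuring that the Rosengarten embedding of $U$ into $\widetilde U$ is compatible with the various extensions used to assemble $\widetilde{\mathscr{U}}$, so that the local smoothenings glue into a global smooth separated group scheme over $S$.
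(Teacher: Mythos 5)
Your overall route --- reduce via the component group and the maximal torus to the Chevalley sequence, isolate the wound unipotent case, and attack it with Rosengarten's embedding into a weakly permawound group together with the explicit model $\mathscr{G}_{\mathrm{m}}^{(1/p)}/\mathscr{G}_{\mathrm{m}}$ --- is the same as the paper's. But there is a genuine gap, and it occurs twice in the same form: you never deal with totally non-smooth (infinitesimal) groups, which is precisely the hard point that the paper's Propositions \ref{vanishprop}, \ref{locglobsurjlem} and \ref{Nnonsmprop} exist to handle. First, over $K$ (note that $r=1$ still means $[K:K^p]=p$, so $K$ is imperfect) the affine kernel $L$ in the Chevalley sequence $0\to L\to G\to A\to 0$ cannot in general be chosen smooth; Totaro's pseudo-abelian varieties are exactly the smooth connected groups for which every such $L$ is non-smooth, so your reduction silently excludes them. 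The paper instead passes to $L^\natural$ and must then produce a Néron lft-model of $G/L^\natural$, an extension of $A$ by the totally non-smooth group $L/L^\natural$; this is Proposition \ref{Nnonsmprop}, whose proof (kill $N$ by a Frobenius twist via Proposition \ref{vanishprop}, then descend along the induced surjection on points of étale algebras via Lemma \ref{locglobsurjlem}) is one of the real technical inputs and is absent from your argument.

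The same issue reappears in the weakly permawound case: Rosengarten's filtration (Theorem 9.5 of \cite{Ros}, which the paper applies directly in Proposition \ref{permawoundexprop}) has successive quotients $\boldsymbol{\nu}_1(1)_K$ \emph{or} $N_0=\boldsymbol{\alpha}_p^{(1/p)}$, an infinitesimal group --- not a finite étale one as you assert. Relative perfection kills $N_0$, which is why it is invisible in the filtration of $\widetilde U^{\RP}$ coming from Proposition \ref{étalepwprop}; for the same reason one cannot ``lift'' the $\RP$-filtration back to a filtration of $\widetilde U$ with smooth or étale graded pieces, and Proposition \ref{RPRprop} provides no such lift. Again Proposition \ref{Nnonsmprop} is what absorbs the $N_0$ steps. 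By contrast, the step you flag as the main obstacle --- the Néron mapping property for the smoothened schematic closure of $U$ in $\widetilde{\mathscr{U}}$ --- is exactly the content of \cite[Chapter 10.1, Proposition 4]{BLR} and needs no new work. Finally, for the extension steps the paper proves its own dévissage statement (Proposition \ref{Dévprop}) rather than quoting \cite[Propositions 4.2, 4.5, and 4.6]{OS}, since the point of the section is to avoid the relatively perfect machinery of \emph{op.\ cit.}; that is a stylistic rather than a mathematical difference.
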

This result was recently established by T. Suzuki and the author \cite[Theorem 6.3]{OS}, but the methods used in \it op. cit. \rm rely on the rather advanced theory of relatively perfect schemes, duality on relatively perfect sites, as well as the newly introduced concept of \it relatively perfect Néron models \rm \cite[Definition 4.1]{OS}. The aim of the present section is to circumvent the use of these methods; our new proof is partly inspired by ideas already present in \it op. cit., \rm but several new ideas are nevertheless needed. Most importantly, we shall make use of Rosengarten's embedding theorem \cite[Theorem 1.4]{Ros}, which allows us to embed any $p$-torsion smooth connected wound unipotent group $G$ into a smooth wound weakly permawound one. This will allow us to reduce the (crucial) unipotent case to that of $\boldsymbol{\nu}_1(1)_K \cong \mathbf{G}_{\mathrm{m}}^{(1/p)}/\Gm.$ Our reliance on Rosengarten's results does not destroy the elementary nature of the present approach since the results in \cite{Ros}, while highly non-trivial, are also derived by elementary methods.

We begin by recalling the following result:
\begin{proposition}
Let $E$ be a finite étale group scheme over $K.$ Then $E$ admits a Néron model over $S.$ In particular, a smooth algebraic group $G$ over $K$ admits a Néron (lft-)model over $S$ if and only if so does its identity component $G^0.$ \label{etaleprop}
\end{proposition}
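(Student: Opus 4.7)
The plan is to prove both assertions in sequence, with the first (existence of a Néron model for a finite étale $E$) feeding into the second via the exact sequence $0 \to G^0 \to G \to E \to 0$ in which $E := \pi_0(G)$ is finite étale over $K$.

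For the first part, I would decompose $E = \coprod_i \Spec L_i$ into its components, where $L_i/K$ are finite separable field extensions, and let $\tilde S_i$ denote the normalization of $S$ in $L_i$. Since $S$ is excellent (hence Nagata), $\tilde S_i \to S$ is finite. Define $\mathscr{E}_i \subseteq \tilde S_i$ to be the open locus on which $\tilde S_i \to S$ is étale; it is nonempty since it contains the generic fiber. Setting $\mathscr{E} := \coprod_i \mathscr{E}_i$ yields an étale $S$-scheme of finite type with generic fiber $E$. To verify the Néron mapping property, one takes $T \to S$ smooth (WLOG connected, hence integral since $T$ is regular) and a morphism $\phi \colon T_K \to E$ landing in some $\Spec L_i$. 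By normality of $T$, the universal property of normalization gives a unique extension $\tilde\phi \colon T \to \tilde S_i$. The main step is to show that $\tilde\phi$ automatically factors through $\mathscr{E}_i$: at a point $t \in T$ mapping to $p \in \tilde S_i$, if the ramification index of $\Og_{\tilde S_i, p}/\Og_{S,s}$ were $e > 1$, the pullback of the uniformizer $\pi_s$ to $\Og_{T,t}$ would lie in $\mathfrak{m}_{T,t}^e \subseteq \mathfrak{m}_{T,t}^2$, contradicting that $\pi_s$ is part of a regular system of parameters of $\Og_{T,t}$ (a consequence of smoothness of $T$ over $S$); similarly, an inseparable residue extension $\kappa(p)/\kappa(s)$ is ruled out because smooth morphisms induce separable residue field extensions. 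The group structure on $\mathscr{E}$ then follows from the Néron mapping property applied to the multiplication and inversion on $E$.

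For the second assertion, both directions exploit the exact sequence above together with the Néron model $\mathscr{E}$ of $E$ from the first part. In the forward direction, I would take $\mathscr{G}^0$ to be the relative identity component of $\mathscr{G}$, which is a smooth open subgroup with generic fiber $G^0$; the quotient $\mathscr{G}/\mathscr{G}^0$ is étale over $S$ with generic fiber $E$ and therefore identifies with $\mathscr{E}$. The Néron property of $\mathscr{G}^0$ then follows: given $\phi \colon T_K \to G^0$ for $T$ smooth over $S$, the Néron property of $\mathscr{G}$ provides an extension $T \to \mathscr{G}$, whose composition with the projection $\mathscr{G} \to \mathscr{E}$ has trivial generic fiber and is therefore itself trivial by the Néron property of $\mathscr{E}$, forcing the extension to land in $\mathscr{G}^0 = \ker(\mathscr{G} \to \mathscr{E})$. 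For the backward direction, starting from $\mathscr{G}^0$, I would construct $\mathscr{G}$ as a $\mathscr{G}^0$-torsor over $\mathscr{E}$: choose an étale cover $\mathscr{U} \to \mathscr{E}$ trivializing the $G^0$-torsor $G \to E$ over the generic fiber, and obtain $\mathscr{G}$ by étale descent of $\mathscr{G}^0 \times_S \mathscr{U}$ from $\mathscr{U}$ to $\mathscr{E}$, the descent cocycle being the unique extension (via the Néron property of $\mathscr{G}^0$ applied to the smooth $S$-scheme $\mathscr{U} \times_{\mathscr{E}} \mathscr{U}$) of the generic gluing cocycle.

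The main obstacle lies in Part 1, specifically in the verification that $\tilde\phi$ factors through the étale locus; this is where the smoothness of $T$ over $S$ is used essentially (beyond mere normality). The descent argument in Part 2 is more formal but requires some care to ensure effectivity in the category of schemes, which is fine here because $\mathscr{G}^0 \to S$ is smooth and separated, and $\mathscr{U} \to \mathscr{E}$ can be chosen to be finite étale (since $\mathscr{E}$ itself is étale over $S$).
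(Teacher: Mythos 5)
The most serious problem is in the forward direction of the second assertion. The relative identity component $\mathscr{G}^0$ of the N\'eron model $\mathscr{G}$ of $G$ is \emph{not} the N\'eron (lft-)model of $G^0$, and the quotient $\mathscr{G}/\mathscr{G}^0$ does \emph{not} identify with $\mathscr{E}$: its fibre over a closed point $s$ is the component group of $\mathscr{G}_s$, which has nothing to do with $E=\pi_0(G)$. Already for $G$ connected (so $E=0$ and $\mathscr{E}=S$) whose N\'eron model has a disconnected closed fibre --- an elliptic curve with multiplicative reduction, say --- a point of $G(K)=G^0(K)$ specialising into a non-identity component extends to a section of $\mathscr{G}$ that does not factor through $\mathscr{G}^0$; so both the identification $\mathscr{G}/\mathscr{G}^0\cong\mathscr{E}$ and the conclusion ``the extension lands in $\mathscr{G}^0$'' are false, not merely unproved. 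The correct object, and what the paper uses, is the scheme-theoretic closure $\mathscr{G}'$ of $G^0$ in $\mathscr{G}$: it is a flat closed subgroup scheme containing $\mathscr{G}^0$, hence a union of fibral components (each a torsor under the fibral identity component, hence smooth), and it inherits the mapping property because $T_K$ is schematically dense in any smooth $T\to S$. In general $\mathscr{G}'$ strictly contains $\mathscr{G}^0$.

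Two further points. In Part 1, your justification for excluding points $p$ with inseparable residue extension is wrong: smooth morphisms do \emph{not} induce separable residue field extensions (consider the closed point of $\mathbf{A}^1_{\kappa}$ cut out by $x^p-u$ for $\kappa=\F_p(u)$). The step itself is correct, but needs a different argument: if $\tilde\phi(t)=p$ and $e_p=1$, reduction modulo $\pi_s$ gives an embedding $\kappa(p)\hookrightarrow \Og_{T_s,t}$ over $\kappa(s)$; the latter ring is geometrically reduced over $\kappa(s)$ (being a local ring of a smooth $\kappa(s)$-scheme), while $\kappa(p)\otimes_{\kappa(s)}\kappa(s)^{1/p}$ is non-reduced whenever $\kappa(p)/\kappa(s)$ is inseparable, and tensoring the embedding with the flat extension $\kappa(s)^{1/p}$ yields a contradiction. (Your ramification-index argument for $e>1$ is fine.) In the backward direction, besides the effectivity issue you flag yourself, the cover $\mathscr{U}\to\mathscr{E}$ obtained as an \'etale locus of a normalisation need not be surjective onto $\mathscr{E}$, and you never verify that the descended torsor satisfies the N\'eron mapping property. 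The paper's route --- pass to a finite separable extension splitting $\pi_0(G)$ and trivialising the components, form the disjoint union of copies of the N\'eron model of $G^0$ glued by the extended translation maps, and descend using the standard permanence properties of N\'eron models under finite separable base change --- avoids both difficulties and is the argument you should compare against.
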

\begin{proof}
If $E$ is the constant $K$-group scheme associated with some finite Abelian group $H,$ one immediately verifies that the constant $S$-group scheme associated with $H$ is the Néron model of $E.$ One reduces to this case using \cite[Chapter 7.2, Proposition 4]{BLR}. 

For the second claim, let $\pi_0(G)$ be the scheme of connected components of $G.$ After replacing $K$ by a finite separable extension if necessary, we may assume that $\pi_0(G)$ is constant. Then $G=\sqcup_{\alpha\in \pi_0(G)} G_{\alpha},$ where $G_{\alpha}$ is the connected component of $G$ mapped to $\alpha.$ Replacing $K$ by a finite separable extension $L$ again, we may assume that all $G_{\alpha}$ are isomorphic to $G^0.$ The group structure of $G$ is then given by a family of morphisms $\phi_{\alpha\beta} \colon G_{\alpha}\times_K G_{\beta} \to G_{\alpha+\beta}.$ Let $S_L$ be the integral closure of $S$ in $L.$ Suppose first that $G^0$ admits a Néron lft-model $\mathscr{G}_0.$ Note that this condition is not affected by replacing $S$ by $S_L$ because the morphism $S_L\to S$ is generically étale \cite[Chapter 10.1, Proposition 9]{BLR}. Our assumption implies that each connected component $G_{\alpha}$ admits a Néron lft-model $\mathscr{G}_{\alpha}$ over $S.$ The morphisms $\phi_{\alpha\beta}$ canonically extend to morphisms $\mathscr{G}_{\alpha}\times_S \mathscr{G}_{\beta} \to \mathscr{G}_{\alpha + \beta}$ by the Néron mapping property. These morphisms define a group structure on the $S$-scheme $$\mathscr{G}:=\bigsqcup_{\alpha\in \pi_0(G)} \mathscr{G}_{\alpha}.$$ If $\Pi$ is the Néron model of the (constant) group scheme $\pi_0(G),$ the group scheme $\mathscr{G}$ fits into an exact sequence $0\to \mathscr{G}_0 \to \mathscr{G} \to \Pi \to 0$ by construction. Hence $G$ admits a Néron (lft-)model by \cite[Corollary 2.6]{OvGR} together with \cite[Chapter 7.2, Proposition 4 and Chapter 10.1, Proposition 4]{BLR}. For the other direction, let $\mathscr{G}$ be a Néron (lft-)model of $G$ over $S,$ and let $\mathscr{G}'$ be the scheme-theoretic closure of $G^0$ in $\mathscr{G}.$ Then $\mathscr{G}'$ is smooth over $S$ and hence a Néron (lft-)model of $G^0.$
\end{proof}

The following proposition is known and was used, for example, in \cite{OS}. As there does not seem to be a complete proof in the literature, we provide one here.
\begin{proposition}
Let $0 \to G' \to G \to G'' \to 0$ be an exact sequence of smooth algebraic groups over $K.$ If $G'$ and $G''$ admit Néron lft-models over $S,$ then so does $G.$ \label{Dévprop}
\end{proposition}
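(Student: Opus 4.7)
Plan. Let $\mathscr{G}'$ and $\mathscr{G}''$ denote the Néron lft-models of $G'$ and $G''$ over $S.$ The idea is to construct the Néron lft-model of $G$ as the middle term of a short exact sequence
\begin{align*}
0 \to \mathscr{G}' \to \mathscr{G} \to \mathscr{G}'' \to 0
\end{align*}
of smooth separated $S$-group schemes extending the given exact sequence on the generic fibre, and to verify the Néron mapping property for the resulting $\mathscr{G}.$

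The starting point is the sheaf-theoretic reformulation of the Néron mapping property: writing $j\colon \Spec K \to S$ for the generic point inclusion, one has $\mathscr{G}' = j_\ast G'$ and $\mathscr{G}'' = j_\ast G''$ as sheaves on the smooth site of $S.$ I would therefore study the candidate $\mathscr{G} := j_\ast G$ on this site, defined by $\mathscr{G}(T) := G(T_K)$ for smooth $T\to S,$ which sits in an exact sequence
\begin{align*}
0 \to \mathscr{G}' \to \mathscr{G} \to \mathscr{G}'' \to \R^1 j_\ast G'
\end{align*}
obtained by pushing forward $0 \to G' \to G \to G'' \to 0.$

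The core technical step is to show, possibly after an étale base change on $S$ (harmless by \cite[Chapter 10.1, Proposition 9]{BLR}), that the composite $\mathscr{G}'' \to \R^1 j_\ast G'$ vanishes, so that $\mathscr{G} \to \mathscr{G}''$ becomes a surjection of sheaves. Since $G'$ is smooth, the $G'$-torsor $G \to G''$ is étale-locally trivial on $G'' = (\mathscr{G}'')_K;$ combined with the Néron mapping property of $\mathscr{G}',$ such local trivialisations extend to an étale-local trivialisation of a $\mathscr{G}'$-torsor on $\mathscr{G}''$ whose generic fibre recovers $G \to G''.$ Once the short exact sequence is in place, $\mathscr{G}$ becomes an étale-locally trivial $\mathscr{G}'$-torsor over $\mathscr{G}'',$ hence representable by a smooth separated algebraic space over $\mathscr{G}''$ and \textit{a fortiori} over $S;$ local triviality upgrades this to a scheme, and smoothness and separatedness transfer from $\mathscr{G}'$ and $\mathscr{G}''.$

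The Néron mapping property for $\mathscr{G}$ then follows formally: given smooth $T\to S$ and $\phi\colon T_K \to G,$ the composite $\pi\phi$ extends uniquely to $T \to \mathscr{G}''$ by the NMP of $\mathscr{G}'';$ pulling back $\mathscr{G} \to \mathscr{G}''$ gives a $\mathscr{G}'$-torsor on $T$ with a canonical trivialisation on $T_K,$ which the NMP of $\mathscr{G}'$ extends uniquely to $T.$ The main obstacle I expect is the vanishing of the obstruction class in $\R^1 j_\ast G'$ over $\mathscr{G}''$---equivalently, the extension of the $G'$-torsor $G \to G''$ to a $\mathscr{G}'$-torsor on $\mathscr{G}''$ étale-locally. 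This is precisely where the global hypothesis that $G'$ admits a Néron lft-model over $S$ (rather than merely local Néron models at each closed point of $S$) is indispensable; all remaining steps are essentially formal consequences of descent and the universal properties at hand.
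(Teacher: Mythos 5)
There is a genuine gap at what you yourself identify as the core technical step: the vanishing of the obstruction $\mathscr{G}''\to \R^1 j_\ast G'$, equivalently the surjectivity of $j_\ast G \to j_\ast G''$, is simply false in general, and no étale base change or shrinking of $S$ repairs it. The paper records an explicit counterexample in the remark immediately following this proposition: take $0\to \boldsymbol{\mu}_n \to \Gm \xrightarrow{\,n\,} \Gm \to 0$ with $n$ invertible on $S$, so that all three groups are smooth ($\boldsymbol{\mu}_n$ is étale) and the hypotheses of the proposition hold; the induced map of Néron lft-models $\mathscr{G}_{\mathrm{m}}\to\mathscr{G}_{\mathrm{m}}$ is not surjective over any dense open subset of $S$, because on component groups at each closed point it is multiplication by $n$ on $\Z$. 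The underlying error in your torsor argument is a confusion of topologies: a section of $\mathscr{G}''$ over an étale $T\to S$ gives a $G'$-torsor over $T_K$ which is trivialised by an étale cover of $T_K$, but such a cover need not extend to an étale cover of $T$ (it ramifies at closed points), which is exactly why the stalks of $\R^1 j_\ast G'$ at closed points are nonzero. Consequently $j_\ast G$ cannot be exhibited as a $\mathscr{G}'$-torsor over $\mathscr{G}''$, and your representability and NMP arguments, which both rest on that torsor structure, collapse.

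The paper's proof takes a genuinely different, local-to-global route that sidesteps exactness of the sequence of lft-models entirely. After reducing to the case where $G$ and $G''$ are connected (Proposition \ref{etaleprop}), one spreads out the exact sequence to an exact sequence $0\to\mathscr{N}\to\mathscr{G}\to\mathscr{G}''^0\to 0$ of finite-presentation smooth models over a dense open subset of $S$, where $\mathscr{N}$ and $\mathscr{G}''^0$ are the identity components of (or equal to) the given Néron lft-models. Local Néron lft-models $\mathscr{G}_s$ of $G$ exist over every $\Og_{S,s}$ by the local theory \cite[Chapter 10.2, Theorem 2]{BLR}; a Lie-algebra comparison via the five lemma shows $\Lie\mathscr{G}\otimes_{\Og_S}\Og_{S,s}\to\Lie\mathscr{G}_s$ is an isomorphism for every closed point $s$, and one concludes by the local-to-global criterion \cite[Chapter 10.1, Proposition 9]{BLR}. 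If you want to salvage your strategy, you would have to replace the full Néron lft-models by their identity components over a dense open and abandon the claim that the global models themselves form a short exact sequence.
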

\begin{proof}
Note that the map $G^0 \to G''^0$ is smooth and surjective, but its kernel need not be connected. Using Proposition \ref{etaleprop}, we may assume that $G$ and $G''$ are connected, and that $G'$ is either connected or étale. Denote by $\mathscr{G}'$ and $\mathscr{G}''$ the Néron lft-models of $G'$ and $G'',$ respectively. Let $\mathscr{G}''^0$ be the identity component of $\mathscr{G}''.$ Moreover, let $\mathscr{N}:=\mathscr{G}'^0$ if $G'$ is connected, and $\mathscr{N}:=\mathscr{G}'$ if $G'$ is étale. Then $\mathscr{N}$ and $\mathscr{G}''^0$ are of finite presentation over $S,$ so after shrinking $S$ if necessary, we may suppose that the exact sequence from the proposition extends to an exact sequence
$$0 \to \mathscr{N} \to \mathscr{G} \to \mathscr{G}'^0 \to 0,$$ where $\mathscr{G}$ is a (necessarily smooth) model of $G$ of finite presentation over $S.$ For each closed point $s\in S,$ $G$ admits a Néron lft-model $\mathscr{G}_s$ over $\Og_{S,s}$ by \cite[Chapter 10.2, Theorem 2]{BLR}. Now pick such a closed point $s$ and denote by $\mathscr{G}'_s$ and $\mathscr{G}''_s$ the Néron lft-models of $G'$ and $G''$ over $\Og_{S,s},$ which are localisations of $\mathscr{G}'$ and $\mathscr{G}'',$ respectively. Consider the commutative diagram 
$$\begin{tikzcd}
0 \arrow[r]  & \Lie \mathscr{N} \otimes_{\Og_S} \Og_{S,s} \arrow[r]\arrow[swap]{d}{\cong} & \Lie \mathscr{G} \otimes_{\Og_S} \Og_{S,s} \arrow[r]\arrow{d} &  \Lie \mathscr{G}''^0 \otimes_{\Og_S} \Og_{S,s} \arrow[r] \arrow{d}{\cong}&0 \\
&\Lie \mathscr{G}'_s \arrow[r] & \Lie \mathscr{G}_s \arrow[r] & \Lie \mathscr{G}''_s; 
\end{tikzcd}$$
here the top row is exact, the bottom row is a complex, and the right and left vertical arrows are isomorphisms. This shows that the morphism $\Lie \mathscr{G}_s \to  \Lie \mathscr{G}''_s$ is surjective, so the map $\mathscr{G}_s \to \mathscr{G}''_s$ is smooth (this can be reduced to the finite type case by considering identity components, which is \cite[Proposition 1.1 (e)]{LLR}). In particular, the kernel of this map is the Néron lft-model of its generic fibre, so the sequence $0 \to \mathscr{G}'_s \to \mathscr{G}_s \to \mathscr{G}''_s$ is exact. This shows that the bottom row in the diagram is, in fact, exact and the bottom left horizontal arrow is injective. Therefore the lemma of five homomorphisms shows that the map 
$$ \Lie \mathscr{G} \otimes_{\Og_S} \Og_{S,s} \to \Lie \mathscr{G}_s$$ is an isomorphism for all closed points $s\in S.$ The proposition now follows from \cite[Chapter 10.1, Proposition 9]{BLR}. 
\end{proof} \\
\noindent$\mathbf{Remark}.$ It is not generally true that the sequence $0 \to \mathscr{G}' \to \mathscr{G} \to \mathscr{G}'' \to 0$ of Néron lft-models induced by the exact sequence from the proposition is exact over some dense open subscheme of $S.$ For example, let $n$ be a positive integer invertible on $S.$ Then the exact sequence $0 \to \boldsymbol{\mu}_n \to \Gm \to \Gm \to 0$ over $K$ induces the exact sequence $0 \to \boldsymbol{\mu}_n \to \mathscr{G}_{\mathrm{m}}\to \mathscr{G}_{\mathrm{m}}$ of Néron lft-models over $S;$ the last map is not surjective over any dense open subset of $S$ because the induced map on connected components is multiplication by $n$ on $\Z$ for all closed points $s\in S.$ 

The statement analogous to Proposition \ref{Dévprop} for (quasi-compact) Néron models is well-known \cite[Chapter 7.5, Proposition 1]{BLR}. We shall use both results freely and say that the Néron (lft-)model of $G$ exists \it by dévissage. \rm

\begin{proposition}
Let $\mathscr{G} \to S$ be a smooth separated group scheme with generic fibre $G$ of finite presentation over $K=\Og_{S, \eta}.$ Assume that
\begin{itemize}
\item for every closed point $s\in S,$ $G$ admits a Néron lft-model $\mathscr{N}_s$ over $\Og_{S,s},$
\item for every \rm étale \it morphism $E\to S$ and every map $\phi\colon E\times_S\Spec K \to G,$ there is a unique map $E\to \mathscr{G}$ extending $\phi,$ and 
\item for every closed point $s\in S,$ the Abelian group $\mathscr{G}(\kappa(s)\sep)/\mathscr{G}^0(\kappa(s)\sep)$ is finitely generated. 
\end{itemize}
Then $\mathscr{G}$ is the Néron lft-model of $G$ over $S.$ \label{IsNmprop}
\end{proposition}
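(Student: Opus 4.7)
The plan is to verify the Néron mapping property by identifying $\mathscr{G}_s := \mathscr{G} \times_S \Og_{S,s}$ with $\mathscr{N}_s$ at each closed point $s \in S$, and then gluing local extensions. Uniqueness of any extension is immediate from the separatedness of $\mathscr{G} \to S$ together with the scheme-theoretic density of $T_K$ in any smooth $S$-scheme $T$ (which follows from flatness and the integrality of $S$). For existence I reduce to the local case on $T$ by uniqueness: a closed point $t \in T$ mapping to the generic point of $S$ is handled by $\phi$ itself, while a closed point $t$ mapping to some $s \in |S|$ makes $\Og_{T,t}$ smooth over $\Og_{S,s}$, so the Néron property of $\mathscr{N}_s$ (existing by the first hypothesis) supplies an extension $\Og_{T,t} \to \mathscr{N}_s$; gluing these local extensions then requires only the key identification $\mathscr{G}_s = \mathscr{N}_s$.

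To establish this identification, I would consider the canonical $\Og_{S,s}$-homomorphism $\tau \colon \mathscr{G}_s \to \mathscr{N}_s$ produced by the Néron property, which is an isomorphism on generic fibres. The strict henselization $R^{\mathrm{sh}} := \Og_{S,s}^{\mathrm{sh}}$ is a filtered colimit of local étale $\Og_{S,s}$-algebras, each coming from an étale $S$-scheme by localisation, so the second hypothesis yields $\mathscr{G}_s(R^{\mathrm{sh}}) = G(K^{\mathrm{sh}}) = \mathscr{N}_s(R^{\mathrm{sh}})$, showing that $\tau$ is bijective on $R^{\mathrm{sh}}$-points. For any smooth $R$-group scheme $\mathscr{H}$, the successive graded pieces of the reduction filtration $\mathscr{H}(R^{\mathrm{sh}} / \mathfrak{m}^{n+1}) \to \mathscr{H}(R^{\mathrm{sh}} / \mathfrak{m}^n)$ are canonically isomorphic to $\Lie \mathscr{H} \otimes_R (\mathfrak{m}^n / \mathfrak{m}^{n+1})$, functorially in $\mathscr{H}$; bijectivity of $\tau$ on $R^{\mathrm{sh}}$-points therefore forces $\Lie \tau \otimes_R \kappa(s)\sep$ to be an isomorphism. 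Since $\Lie \mathscr{G}_s$ and $\Lie \mathscr{N}_s$ are free $R$-modules of the same rank, Nakayama gives that $\Lie \tau$ is an isomorphism, so $\tau$ is étale at the identity and hence (by translation-invariance for a group homomorphism) everywhere on $\mathscr{G}_s$.

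It remains to show $\tau$ is an isomorphism. The kernel $\ker \tau$ is étale over $R$ with trivial generic fibre and $\ker \tau(R^{\mathrm{sh}}) = \{e\}$; since every connected component of an étale $R$-scheme surjects onto $\Spec R$, any component other than the identity section would contribute a non-trivial $K$-point, whence $\ker \tau = \Spec R$ and $\tau$ is an open immersion. The resulting clopen embedding $\mathscr{G}_s \hookrightarrow \mathscr{N}_s$ is then forced to be an equality: every connected component of the smooth special fibre of $\mathscr{N}_s$ admits a $\kappa(s)\sep$-point, this point lifts by smoothness to an $R^{\mathrm{sh}}$-point, and the above bijection ensures it is the image under $\tau$ of an $R^{\mathrm{sh}}$-point of $\mathscr{G}_s$, so the component lies in $\mathscr{G}_s$. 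The third hypothesis is consistent with this outcome, as both component groups in question are automatically finitely generated by \cite[Proposition 3.5]{HN}. With $\mathscr{G}_s = \mathscr{N}_s$ in hand, the local extensions $\Og_{T,t} \to \mathscr{N}_s \subseteq \mathscr{G}$ glue by uniqueness to a global morphism $T \to \mathscr{G}$, as required. The main obstacle is the étale step, where the purely étale-local input of the second hypothesis must be translated into infinitesimal data via the formal group filtration of smooth group schemes.
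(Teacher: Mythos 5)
Your overall architecture coincides with the paper's: everything reduces to showing that the canonical map $\tau_s\colon \mathscr{G}\times_S\Spec\Og_{S,s}\to\mathscr{N}_s$ is an isomorphism for every closed point $s$, after which the gluing of local extensions is routine (the paper delegates the gluing to \cite[Lemma 2.5]{OvGR} and the local identification to \cite[Proposition 3.11]{OvChai}). Your derivation that $\tau_s$ is bijective on $\Og_{S,s}^{\mathrm{sh}}$-points is correct, and the later steps (trivial \'etale kernel, open immersion, surjectivity by lifting $\kappa(s)\sep$-points of components) are fine \emph{once} one knows that $\tau_s$ is \'etale.

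The gap is the assertion that bijectivity of $\tau_s$ on $R^{\mathrm{sh}}$-points ``forces'' $\Lie\tau_s\otimes\kappa(s)\sep$ to be an isomorphism. This implication is false in general, and your proof never invokes the third hypothesis, which is exactly what excludes the failure. Concretely, let $R$ be a strictly henselian discrete valuation ring with uniformiser $\pi$, maximal ideal $\mathfrak{m}$ and residue field $k$, let $G=\Gm$ over $K=\Frac R$, and let $V:=\Spec R[x,(1+\pi x)^{-1}]$ be the dilatation of $\Gm$ at the identity of the special fibre, so that $V_K\cong\Gm$ via $y=1+\pi x$ and $V(R)=1+\mathfrak{m}.$ Choosing coset representatives $g_c$ of $H:=1+\mathfrak{m}$ in $K^\times$ and gluing copies $V_c$ of $V$ along their generic fibres via $y=g_c(1+\pi x)$ yields a smooth separated group scheme $\mathscr{G}$ locally of finite type over $R$ with generic fibre $\Gm$ and $\mathscr{G}(R)=\bigsqcup_c g_c(1+\mathfrak{m})=K^\times=G(K)$; yet on the identity component $\Lie\tau$ is multiplication by $\pi,$ so $\tau$ is not \'etale and $\mathscr{G}$ is not the N\'eron lft-model. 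What fails is precisely the third hypothesis: the component group of the special fibre is $K^\times/(1+\mathfrak{m})\cong\Z\oplus k^\times,$ which is not finitely generated. The example also shows why the filtration argument cannot close the gap: surjectivity of $\tau$ on $R^{\mathrm{sh}}$-points does not descend to surjectivity on the graded pieces $\Lie(-)\otimes\mathfrak{m}^n/\mathfrak{m}^{n+1},$ because the preimages of elements of the congruence filtration of $\mathscr{N}_s(R^{\mathrm{sh}})$ may lie on other connected components of $\mathscr{G}$ --- and they do, in abundance, exactly when the component group is infinitely generated. A correct argument must therefore play the point count off against the finite generation of $\mathscr{G}(\kappa(s)\sep)/\mathscr{G}^0(\kappa(s)\sep)$; this is what the cokernel bookkeeping in \cite[Proposition 3.11]{OvChai}, invoked by the paper, accomplishes.
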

\begin{proof} The proposition follows from \cite[Lemma 2.5]{OvGR} once we can show that the morphisms $\mathscr{G}\times_S \Spec \Og_{S,s} \to \mathscr{N}_s$ are isomorphisms for all closed points $s$ of $S.$ 

Assume first that $G$ is connected. The conditions imply that, for each closed point $s\in S,$ the canonical morphism $\mathscr{G} \times_S \Spec \Og_{S,s} \to \mathscr{N}_s$ is an isomorphism; this can be deduced as in the proof of \cite[Proposition 3.11]{OvChai} (since we are not assuming that $\kappa(s)$ be perfect for this proposition, it is not in general true that $\mathrm{coker} (\nu(\kappa(s)\sep))=(\mathrm{coker}\,\nu)(\kappa(s)\sep)$ in the notation of \it loc. cit. \rm However, there is a surjective morphism $$\mathrm{coker} (\nu(\kappa(s)\sep)) \to (\mathrm{coker}\,\nu)(\kappa(s)\sep),$$ so the argument still applies). 

In general, let $\mathscr{G}'$ be the scheme-theoretic closure of $G^0$ in $\mathscr{G}.$ The conditions listed in the proposition are clearly satisfied for $\mathscr{G}'$ as well, so $\mathscr{G}'$ is the Néron lft-model of $G^0$ over $S.$ Because, for each closed point $s\in S,$ the Néron lft-model $\mathscr{N}_s'$ of $G^0$ over $\Og_{S,s}$ is an open subscheme of $\mathscr{N}_s,$ the connected case shows that the morphism $\mathscr{G}\times_S \Spec \Og_{S,s}\to \mathscr{N}_s$ is an étale monomorphism, and the second condition from the proposition shows that it is surjective. Now \cite[Tag 025G]{Stacks} shows that the morphism is a surjective open immersion, and hence an isomorphism. 
\end{proof}

Given a scheme $X$ locally of finite presentation over a field $\kappa,$ there exists a largest geometrically reduced closed subscheme $X^\natural \subseteq X;$ see \cite[Lemma C.4.1]{CGP}. If $X$ is a group scheme over $\kappa,$ then so is $X ^ \natural$ because the functor $(-)^\natural$ commutes with finite products (\it ibid.\rm). In this case, $X^\natural$ is automatically smooth over $\kappa.$ Recall that an algebraic group $N$ over $\kappa$ is said to be \it totally non-smooth \rm if $N(\kappa\sep)=0$ or, equivalently, if $N^\natural =0.$ We shall need to control the behaviour of Néron lft-models with respect to quotients with totally non-smooth kernels. 

Let $R$ be an algebra over a field $\kappa$ satisfying $[\kappa:\kappa^p] < \infty.$ For a $\kappa$-algebra $A$, let $A^{(p)}$ be the $\kappa$-algebra $A\otimes_{\kappa, F}\kappa;$ the notation here indicates that $\kappa$ is regarded as a $\kappa$-algebra via the Frobenius morphism. In particular, we have the relative Frobenius $A^{(p)} \to A$ given by $a\otimes \lambda \mapsto \lambda a^p.$ The functor on the category of $\kappa$-algebras given by $A\mapsto \Hom_\kappa(A^{(p)}, R)$ is representable by a $\kappa$-algebra $R^{(1/p)}$ \cite[Chapter 7.6, first part of the proof of Theorem 4]{BLR}, which is of finite type over $\kappa$ as soon as $R$ is. Iterating this process yields the $\kappa$-algebras $R^{(1/p^n)}$ for $n\in \N.$ We have a canonical adjunction morphism $R \to (R^{(1/p)})^{(p)},$ composing which with the relative Frobenius of $R^{(1/p)}$ yields the canonical map
$$g^\ast_{R/\kappa} \colon R \to R^{(1/p)}.$$  We shall put $$R^{(1/p^\infty)} := \varinjlim R^{(1/p^n)},$$ where the limit is taken along the maps $g^\ast_{R^{(1/p^n)}/\kappa}.$\footnote{Of course, we have $(\Spec R)^{\RP}= \Spec R^{(1/p^\infty)}.$} 
\begin{proposition}
For any $\kappa$-algebra $R,$ the algebra $R^{(1/p^\infty)}$ is geometrically reduced. \label{gmredprop}
\end{proposition}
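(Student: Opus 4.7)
My plan is to reduce geometric reducedness of $R^{(1/p^\infty)}$ to reducedness after base change to the perfect closure $\kappa\perf$ of $\kappa,$ and then to exhibit this base change as an absolutely perfect $\F_p$-algebra. Setting $X:=\Spec R,$ we have $X^{\RP}=\Spec R^{(1/p^\infty)}$ by the footnote, and by construction the relative Frobenius $F_{X^{\RP}/\kappa}\colon X^{\RP}\to (X^{\RP})^{(p)}$ is an isomorphism; this is the only input about $R^{(1/p^\infty)}$ I will need.

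First, I would invoke the standard fact (\cite[Tag 030V]{Stacks}) that a $\kappa$-algebra is geometrically reduced if and only if it becomes reduced after base change to the perfect closure $\kappa\perf.$ Hence it suffices to prove that $Y:=X^{\RP}\times_\kappa \Spec \kappa\perf$ is reduced.

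Next, I would use that the formation of the relative Frobenius commutes with arbitrary base change: the relative Frobenius $F_{Y/\kappa\perf}\colon Y\to Y^{(p)}$ (the Frobenius twist now being taken over $\kappa\perf$) is the pullback of $F_{X^{\RP}/\kappa},$ hence still an isomorphism. Because $\kappa\perf$ is (absolutely) perfect, the Frobenius of $\Spec \kappa\perf$ is an isomorphism, so the canonical projection $Y^{(p)} = Y\times_{\kappa\perf,F_{\kappa\perf}}\Spec \kappa\perf \to Y$ is itself an isomorphism. Since the absolute Frobenius $F_Y$ factors as the composition of $F_{Y/\kappa\perf}$ and this projection, it is an isomorphism, i.e., $Y$ is absolutely perfect.

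Finally, any absolutely perfect $\F_p$-scheme is reduced: locally, Frobenius being an isomorphism implies it is injective on sections, so any element $a$ with $a^{p^n}=0$ must satisfy $a=0.$ This gives the reducedness of $Y$ and completes the argument. The only step requiring real care—rather than a genuine obstacle—is the bookkeeping of the base-change behaviour of the relative Frobenius; the hypothesis $[\kappa:\kappa^p]<\infty$ is used only indirectly, in ensuring that the Weil restrictions $R^{(1/p^n)}$ along the finite morphism $F_\kappa$ are well-defined to begin with, and plays no further role in the reducedness argument itself.
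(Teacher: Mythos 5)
Your proof is correct, and it shares with the paper the one essential input: that $R^{(1/p^\infty)}$ is relatively perfect, i.e.\ that its relative Frobenius is an isomorphism (the paper justifies this in one line by noting that the adjunction maps $R^{(1/p^n)} \to (R^{(1/p^{n+1})})^{(p)}$ induce an inverse of the relative Frobenius on the colimit; you assert it ``by construction,'' which is the same point and is fine, though worth spelling out). Where you diverge is in the second step: the paper simply cites \cite[Remark 2.2 (3)]{BS} for the implication ``relatively perfect $\Rightarrow$ geometrically reduced,'' whereas you open that black box and prove it directly --- reduce to reducedness after base change to $\kappa\perf$, observe that the relative Frobenius is stable under base change, note that over a perfect base the projection $Y^{(p)}\to Y$ is an isomorphism so that $Y$ is absolutely perfect, and conclude reducedness from injectivity of the absolute Frobenius. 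All of these steps check out, including your closing remark that $[\kappa:\kappa^p]<\infty$ enters only through the representability of the Weil restrictions $R^{(1/p^n)}$ and not through the reducedness argument. What your route buys is self-containedness and transparency (no reliance on \cite{BS} for this step), in keeping with the elementary spirit of the section; what the paper's route buys is brevity. Either is acceptable here.
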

\begin{proof}
This follows from \cite[Remark 2.2 (3)]{BS} once we show that the relative Frobenius $(R^{(1/p^\infty)})^{(p)} \to R^{(1/p^\infty)}$ is an isomorphism. But this is true by construction since the adjunction maps $R^{(1/p^n)} \to (R^{(1/p^{n+1})})^{(p)}$ induce an inverse of the relative Frobenius on the limit. 
\end{proof}
\begin{proposition}
Let $N$ be an affine group scheme of finite type over a field $\kappa$ satisfying $[\kappa:\kappa^p] < \infty.$ Suppose that $N$ is totally non-smooth, i. e. that $N^\natural=0.$ Then there exists some $n\gg 0$ such that the morphism $N^{(1/p^n)} \to N$ (defined as the composition $g_{N/\kappa} \circ ... \circ g_{N^{(1/p^{n-1})}/\kappa}$) vanishes. \label{vanishprop}
\end{proposition}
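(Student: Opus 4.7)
The plan is to combine Proposition \ref{gmredprop} with the universal property of $N^\natural$ to deduce that the limit morphism $N^{(1/p^\infty)} \to N$ factors through $N^\natural = 0,$ and then to use Noetherianity of $N$ to descend this vanishing from the colimit to a finite stage $n.$

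Write $N = \Spec R,$ and let $I := \ker(R \to \kappa)$ be the augmentation ideal defining the identity section of $N.$ A homomorphism $N^{(1/p^n)} \to N$ is trivial precisely when the corresponding ring map $R \to R^{(1/p^n)}$ factors through the counit $R \to \kappa,$ equivalently when $I$ lies in its kernel. So the assertion amounts to showing that $I$ is killed by $R \to R^{(1/p^n)}$ for some $n.$

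The first step is to establish this at the level of the colimit $R^{(1/p^\infty)} = \varinjlim_n R^{(1/p^n)}.$ Let $\mathfrak{a} \subseteq R$ denote the kernel of $R \to R^{(1/p^\infty)},$ so that $R/\mathfrak{a}$ embeds into $R^{(1/p^\infty)}.$ Since $\kappa\sep$ is flat over $\kappa,$ this inclusion remains injective after tensoring with $\kappa\sep;$ as $R^{(1/p^\infty)} \otimes_\kappa \kappa\sep$ is reduced by Proposition \ref{gmredprop}, and reducedness descends to subrings, $R/\mathfrak{a} \otimes_\kappa \kappa\sep$ is reduced as well. Thus $\Spec(R/\mathfrak{a})$ is a geometrically reduced closed subscheme of $N,$ and is therefore contained in $N^\natural$ by the universal property of the latter \cite[Lemma C.4.1]{CGP}. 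Since $N^\natural = 0$ is the identity section, this forces $I \subseteq \mathfrak{a},$ i.e., $I$ maps to zero in $R^{(1/p^\infty)}.$

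The second step is to descend to a finite stage. The algebra $R$ is Noetherian, so $I$ is finitely generated, say by $f_1, \ldots, f_k.$ Because the colimit defining $R^{(1/p^\infty)}$ is filtered, each $f_i$ already vanishes in some $R^{(1/p^{n_i})};$ setting $n := \max_i n_i$ ensures that $I$ maps to zero in $R^{(1/p^n)},$ which gives the desired $n.$ The principal points to verify are the two descent-type facts used in the first step, namely that a subring of a geometrically reduced $\kappa$-algebra is again geometrically reduced (settled by the flatness argument) and the universal property of $N^\natural$ as the maximal geometrically reduced closed subscheme; beyond these, everything is a routine manipulation of finite generation and filtered colimits.
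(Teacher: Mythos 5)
Your argument is essentially the paper's own proof: both reduce to showing that the augmentation ideal $I$ dies in $R^{(1/p^\infty)}$ via Proposition \ref{gmredprop} and the maximality of $N^\natural$, and then descend to a finite stage using Noetherianity and the filtered colimit. The one point to correct is your verification that $\Spec(R/\mathfrak{a})$ is geometrically reduced: in characteristic $p$, reducedness after base change to $\kappa\sep$ does \emph{not} imply geometric reducedness (e.g.\ $\kappa[x]/(x^p-t)$ with $t \notin \kappa^p$ stays reduced over $\kappa\sep$ but not over $\overline{\kappa}$), so your flatness argument must be run with $\overline{\kappa}$ (or the perfect closure) in place of $\kappa\sep$; it works verbatim there, since any field extension is flat and Proposition \ref{gmredprop} gives reducedness of $R^{(1/p^\infty)}\otimes_\kappa \overline{\kappa}$. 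With that one-word repair the proof is complete and matches the paper's; the paper simply asserts the factorization through $e^\ast$ without spelling out the scheme-theoretic-image step you make explicit.
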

\begin{proof}
Write $N=\Spec R$ and let $I\subseteq R$ be the kernel of the morphism $e ^ \ast \colon R\to \kappa$ dual to the inclusion of the neutral element. By assumption, every morphism $R\to R'$ of $\kappa$-algebras with $R'$ geometrically reduced factors through $e^\ast;$ in particular, so does the canonical map $R \to R^{(1/p^\infty)}$ by Proposition \ref{gmredprop}. Since $R$ is Noetherian, $I$ is generated by elements $\alpha_1,..., \alpha_d \in R.$ By assumption, they all map to 0 in $R^{(1/p^\infty)},$ and hence must already map to 0 in $R^{(1/p^n)}$ for some large $n\in \N.$ Then $N^{(1/p^n)} \to N$ factors through the inclusion of the neutral element of $N$ as claimed. 
\end{proof}\\
$\mathbf{Remark}.$ In fact, one can show that a totally non-smooth algebraic group $N$ over a field $\kappa$ is automatically affine. Indeed, by \cite[III, Théorème 8.2, Corollaire 8.3]{DG}, we can write $N$ as an extension $0\to Z \to N \to N^{\mathrm{aff}} \to 0,$ where $N^{\mathrm{aff}}$ and $Z$ are an affine and a smooth connected algebraic group over $\kappa,$ respectively. Because $N$ is totally non-smooth, we must have $Z=0,$ so $N=N^{\mathrm{aff}}.$ We shall only apply Proposition \ref{vanishprop} to algebraic groups already known to be affine.

\begin{lemma}
Let $G_1\to G_2$ be a (necessarily surjective) morphism of smooth algebraic groups over $K$ such that, for all étale $K$-algebras $L,$ the induced morphism $G_1(L) \to G_2(L)$ is surjective. Suppose that $G_1$ admits a Néron (lft-)model over $S$ and that $G_2$ admits Néron lft-models over $\Og_{S,s}$ for all closed points $s\in S.$ Then $G_2$ admits a Néron (lft-)model over $S.$  \label{locglobsurjlem}
\end{lemma}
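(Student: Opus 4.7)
My plan is to construct $\mathscr{G}_2$ as the fppf sheaf quotient $\mathscr{G}_1/\overline{N}$, where $\mathscr{G}_1$ denotes the Néron (lft-)model of $G_1$ over $S$, $N = \ker(G_1 \to G_2)$, and $\overline{N} \subseteq \mathscr{G}_1$ is the scheme-theoretic closure of $N$. Since $S$ is Dedekind and $\mathscr{G}_1 \to S$ is flat, $\overline{N}$ is a flat closed subgroup scheme, and by \cite[Chapter 8.4, Proposition 9]{BLR}, $\mathscr{G}_2 := \mathscr{G}_1/\overline{N}$ is representable by a smooth separated algebraic space over $S$, with $\mathscr{G}_1 \to \mathscr{G}_2$ an fppf $\overline{N}$-torsor. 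The generic fibre of $\mathscr{G}_2$ is $G_1/N = G_2$.

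The main task is then to identify $\mathscr{G}_2 \times_S \Spec \Og_{S,s}$ with the given $\mathscr{N}_s$ for each closed point $s \in S$; this will simultaneously show that $\mathscr{G}_2$ is a scheme and establish the Néron property. Applying the Néron mapping property of $\mathscr{N}_s$ to the composition $G_1 \to G_2 \hookrightarrow \mathscr{N}_s$ over $K$, we obtain a canonical morphism $\mathscr{G}_1 \times_S \Spec \Og_{S,s} \to \mathscr{N}_s$. Scheme-theoretic density combined with separatedness of $\mathscr{N}_s$ forces $\overline{N} \times_S \Spec \Og_{S,s}$ to lie in its kernel, yielding a comparison morphism $\mathscr{G}_2 \times_S \Spec \Og_{S,s} \to \mathscr{N}_s$. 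To prove this is an isomorphism, I will verify that the left-hand side itself satisfies the local Néron mapping property: given smooth $T \to \Spec \Og_{S,s}$ and $\phi \colon T \times_S \Spec K \to G_2$, the étale-point surjectivity hypothesis produces an étale cover $T' \to T$ and a lift $\tilde\phi \colon T' \times_S \Spec K \to G_1$. By the Néron property of $\mathscr{G}_1$, $\tilde\phi$ extends to $\tilde\Phi \colon T' \to \mathscr{G}_1 \times_S \Spec \Og_{S,s}$, and composition with the quotient produces $T' \to \mathscr{G}_2 \times_S \Spec \Og_{S,s}$. Any two such lifts differ over the generic fibre by a map to $N$, which extends by Néron to a map into $\overline{N}$ via scheme-theoretic density; the two lifts therefore agree in $\mathscr{G}_2$, and fppf descent yields the required $T \to \mathscr{G}_2 \times_S \Spec \Og_{S,s}$. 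Uniqueness follows from separatedness.

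The comparison morphism is therefore an isomorphism of smooth algebraic spaces, so each $\mathscr{G}_2 \times_S \Spec \Og_{S,s}$ is canonically isomorphic to $\mathscr{N}_s$ and in particular a scheme; combined with $\mathscr{G}_2 \times_S \Spec K = G_2$ being a scheme, this implies that $\mathscr{G}_2$ is a scheme over $S$ (by spreading out quasi-compact opens and gluing). All the hypotheses of Proposition \ref{IsNmprop} for $\mathscr{G}_2$ then follow from the local isomorphism with $\mathscr{N}_s$: smoothness, separatedness, the local Néron property, and the finite generation of $\mathscr{G}_2(\kappa(s)\sep)/\mathscr{G}_2^0(\kappa(s)\sep)$. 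The hardest step is the descent argument within the local Néron verification, which relies essentially on the hypothesis of étale-point surjectivity of $G_1 \to G_2$—without it one could not lift $\phi$ to $\mathscr{G}_1$ in the first place. For the variant with quasi-compact Néron models, $\mathscr{G}_2$ inherits quasi-compactness from $\mathscr{G}_1$ via the fppf surjection $\mathscr{G}_1 \to \mathscr{G}_2$.
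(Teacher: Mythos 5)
Your overall strategy --- form the quotient $\mathscr{G}_1/\overline{N}$ and compare it with the local N\'eron models --- is the same as the paper's, but your verification of the local N\'eron mapping property contains a genuine gap. You claim that for a smooth $T \to \Spec \Og_{S,s}$ and $\phi \colon T\times_S \Spec K \to G_2$, ``the \'etale-point surjectivity hypothesis produces an \'etale cover $T'\to T$ and a lift $\tilde\phi \colon T'\times_S\Spec K \to G_1$.'' The hypothesis only gives surjectivity of $G_1(L)\to G_2(L)$ for \emph{\'etale} $K$-algebras $L$; it says nothing about lifting maps from an arbitrary smooth $K$-scheme $T_K$ after an \'etale cover of $T$. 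Such \'etale-local liftability (e.g.\ already for $\phi$ the identity of $G_2$) would force $G_1\to G_2$ to be a \emph{smooth} morphism, i.e.\ to have \'etale kernel. But in the lemma's intended application (Proposition \ref{Nnonsmprop}) the map is $G^{(1/p^n)}/N^{(1/p^n)} \to G$, whose kernel is a nontrivial infinitesimal group scheme, so the morphism is not smooth and your lifting step fails exactly where the lemma is needed. This is precisely why the paper routes the argument through Proposition \ref{IsNmprop}: that criterion only demands the extension property for \'etale $E\to S$ --- whose generic fibres \emph{are} \'etale $K$-algebras, so the hypothesis applies verbatim --- together with finite generation of the geometric component groups, which is inherited from $\mathscr{G}_1$ by \cite[Proposition 3.5]{HN} since $\mathscr{G}_2$ is a quotient of $\mathscr{G}_1$. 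With those two inputs, Proposition \ref{IsNmprop} itself produces the isomorphisms $\mathscr{G}_2\times_S\Spec\Og_{S,s}\cong\mathscr{N}_s$; you do not need to (and cannot, by your method) establish them beforehand.

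A secondary point: you take the quotient as an algebraic space via \cite[Chapter 8.4, Proposition 9]{BLR} and plan to deduce schemehood afterwards from the local comparison, but Proposition \ref{IsNmprop} is stated for group \emph{schemes}, and your route to the local comparison is the flawed step above. The paper instead invokes Anantharaman's theorem \cite[Th\'eor\`eme 4.C]{An}, which gives representability of $\mathscr{G}_1/\mathscr{J}$ by a scheme directly over a Dedekind base (here $\mathscr{J}$, the schematic closure of the kernel, is flat because $S$ is Dedekind). Your final remark that quasi-compactness of $\mathscr{G}_2$ follows from that of $\mathscr{G}_1$ is correct and agrees with the paper.
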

\begin{proof} 
Let $J$ be the kernel of $G_1\to G_2$ and let $\mathscr{G}_1$ be the Néron lft-model of $G_1$ over $S.$ Let $\mathscr{J}$ be the scheme-theoretic closure of $J$ in $\mathscr{G}_1;$ this scheme is flat over $S$ because $S$ is a Dedekind scheme. We claim that $\mathscr{G}_2 := \mathscr{G}_1/\mathscr{J}$ is the Néron (lft-)model of $G_2$ over $S.$ Note that this quotient is representable by \cite[Théorème 4.C]{An}. Moreover, it is smooth and separated over $S,$ and is of finite presentation over $S$ as soon as so is $\mathscr{G}_1.$ We observe that the groups of connected components of $\mathscr{G}_2$ are finitely generated for all closed points $s\in S$ because this is already true for $\mathscr{G}_1$ \cite[Proposition 3.5]{HN}. Finally, let $E\to S$ be an étale morphism of finite presentation with generic fibre $\Spec L.$ By assumption, an element $\phi\in G_2(L)$ lifts to an element of $G_1(L),$ which extends to a section in $\mathscr{G}_1(E)$ by the Néron mapping property. Hence $\phi$ extends to a morphism $E \to \mathscr{G}_1 \to \mathscr{G}_2.$ The claim therefore follows from Proposition \ref{IsNmprop}.
\end{proof}

\begin{proposition}
Assume that $r<\infty.$ Let $0 \to N \to G \to H \to 0$ be an exact sequence of algebraic groups over $K$ such that $G$ and $H$ are smooth, whereas $N$ is totally non-smooth.
Suppose moreover that $H$ admits a Néron (lft-)model over $S.$ Then so does $G.$ \label{Nnonsmprop}
\end{proposition}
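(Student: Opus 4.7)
The plan is to apply Lemma \ref{locglobsurjlem} after a Frobenius-twist construction. By Proposition \ref{vanishprop} (noting that $N$ is automatically affine, as indicated in the remark following that proposition), choose $n\in\mathbb{N}$ so that the canonical morphism $N^{(1/p^n)}\to N$ is zero. Weil-restricting the given exact sequence along $F_S^n\colon S\to S$---which is finite locally free of rank $p^{rn}$ since $r<\infty$, so preserves short exactness---yields a commutative diagram
\[
\begin{tikzcd}
0 \arrow[r] & N^{(1/p^n)} \arrow[r] \arrow[d,"0"'] & G^{(1/p^n)} \arrow[r] \arrow[d,"g^n_G"] & H^{(1/p^n)} \arrow[r] \arrow[d,"g^n_H"] & 0 \\
0 \arrow[r] & N \arrow[r] & G \arrow[r] & H \arrow[r] & 0
\end{tikzcd}
\]
with exact rows. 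Since the left vertical arrow vanishes, $g^n_G$ descends to a canonical morphism $f\colon H^{(1/p^n)}\to G$ with $(G\to H)\circ f = g^n_H$; a diagram chase shows $f$ is fppf-surjective, since $g^n_H$ is fppf-surjective and the restriction $f_K\colon \ker g^n_H\to N$ is surjective (any $n\in N$ lifts through the surjective $g^n_G$ to an element of $G^{(1/p^n)}$ which projects into $\ker g^n_H$).

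Next I would observe that the Weil restriction $\mathscr{H}^{(1/p^n)}$ of the Néron (lft-)model $\mathscr{H}$ of $H$ along $F_S^n$ is itself the Néron (lft-)model of $H^{(1/p^n)}$ over $S$: for any smooth $T\to S$, the base change $T^{(p^n)}\to S$ remains smooth, so the Néron mapping property of $\mathscr{H}$ applied to $T^{(p^n)}$ translates, via the Weil-restriction adjunction, to the Néron mapping property of $\mathscr{H}^{(1/p^n)}$ applied to $T$; finite presentation is also preserved. Moreover, $G$ admits a Néron lft-model over each $\Og_{S,s}$: by \cite[Chapter 10.2, Theorem 2]{BLR} it suffices to show $\Hom_K(\Ga, G)=0$. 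Any such morphism composes with $G\to H$ to a morphism $\Ga\to H$ that vanishes (because $H$ admits Néron lft-models locally, whence $\Hom_K(\Ga, H)=0$), so the original map factors through $N$; but $\Ga$ is geometrically reduced while $N^\natural = 0$, forcing this lift to be zero.

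The proof plan concludes by invoking Lemma \ref{locglobsurjlem} on $f\colon H^{(1/p^n)}\to G$; all hypotheses on Néron lft-models have been arranged, so the remaining condition---and the main obstacle---is to show that $f(L)\colon H^{(1/p^n)}(L)\to G(L)$ is surjective for every étale $K$-algebra $L$. Since $N(L)=0$ for such $L$ (as $N^\natural=0$), the exact sequence gives $G(L)\hookrightarrow H(L)$, and the surjectivity reduces to showing that for every $g\in G(L)$ with image $h\in H(L)$, the element $h$ lifts along $g^n_H$ to an element of $H^{(1/p^n)}(L)$. The morphism $f$ induces a commutative square of connecting homomorphisms
\[
\begin{tikzcd}
H(L) \arrow[r, "\delta_2"] \arrow[d, equal] & H^1_{\mathrm{fppf}}(L, \ker g^n_H) \arrow[d, "f_{K\ast}"] \\
H(L) \arrow[r, "\delta_1"] & H^1_{\mathrm{fppf}}(L, N),
\end{tikzcd}
\]
and $\delta_1(h)=0$ (as $h$ lifts to $g\in G(L)$), so $\delta_2(h)\in\ker f_{K\ast}$. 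The hard part is to verify that this kernel vanishes: analyzing $\ker f_K$ through the exact sequence $0\to N^{(1/p^n)}\to \ker g^n_G\to\ker f_K\to 0$ (so $\ker f_K$ is built from Frobenius kernels) and using the vanishing $(N/\operatorname{im} f_K)(L)=0$ (quotients of totally non-smooth groups remain totally non-smooth on étale points), one should be able to conclude---and this delicate technical step is the main obstacle I would expect to confront.
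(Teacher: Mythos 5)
Your overall strategy---twist by Frobenius so that $N^{(1/p^n)}\to N$ vanishes, then feed a surjection onto $G$ into Lemma \ref{locglobsurjlem}---is the paper's, but there is a genuine error at the very first step: Weil restriction along $F_S^n$ does \emph{not} preserve the short exactness of $0\to N\to G\to H\to 0$. Weil restriction along a finite locally free morphism is left exact and preserves surjectivity of \emph{smooth} surjections, but the quotient map $G\to H$ here is precisely not smooth, since its kernel $N$ is totally non-smooth; so $G^{(1/p^n)}\to H^{(1/p^n)}$ is in general not surjective. Concretely, for $K=\F_p(t)$ and $0\to\boldsymbol{\alpha}_p\to\Ga\overset{F}{\to}\Ga\to 0$: writing a point of $\mathbf{G}_{\mathrm{a}}^{(1/p)}$ as $x=\sum_{i=0}^{p-1}x_i^pt^i$, the induced endomorphism of $\mathbf{G}_{\mathrm{a}}^{(1/p)}$ sends $(x_0,\dots,x_{p-1})$ to $(\sum_i x_i^pt^i,0,\dots,0)$, whose image is the proper closed subgroup $\{y_1=\dots=y_{p-1}=0\}$. (This is why the paper's diagram omits the terminal ``$\to 0$'' in the top row.) Consequently the morphism $f\colon H^{(1/p^n)}\to G$ on which your argument rests does not exist: $g^n_G$ descends only to $G':=G^{(1/p^n)}/N^{(1/p^n)}$, which sits inside $H^{(1/p^n)}$ as a closed subgroup, in general a proper one. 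The concluding cohomological step is therefore aimed at a non-existent map, and is in any case unnecessary.

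The repair is exactly what the paper does: work with $G'$ itself. Being a closed subgroup of $H^{(1/p^n)}$, whose Néron (lft-)model is $\mathscr{H}^{(1/p^n)}$ as you correctly argue, $G'$ admits a Néron (lft-)model by \cite[Chapter 10.1, Proposition 4]{BLR}. And the surjectivity of $G'(L)\to G(L)$ for étale $K$-algebras $L$ needs no $H^1_{\mathrm{fppf}}$ analysis: since $g^n_{G}$ commutes with étale base change (for $L$ étale over $K$ the relative Frobenius $L^{(p^n)}\to L$ is an isomorphism), already $G^{(1/p^n)}(L)\to G(L)$ is surjective, hence so is $G'(L)\to G(L)$, and Lemma \ref{locglobsurjlem} applies to $G'\to G$. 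Your verification of the local hypotheses---that $\Hom_K(\Ga,G)=0$ because any such map dies in $H$ and then factors through the totally non-smooth $N$---is correct and agrees with the paper.
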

\begin{proof} Choose some $n\gg 0$ such that the morphism $N^{(1/p^n)} \to N$ vanishes; this is possible by Proposition \ref{vanishprop}. We obtain the commutative diagram 
$$\begin{tikzcd}
0 \arrow[r] & N^{(1/p^n)} \arrow[r] \arrow[d]& G^{(1/p^n)} \arrow[r] \arrow[d]& H ^{(1/p^n)} \arrow[d] \\
0 \arrow[r] & N \arrow[r] &G \arrow[r] & H \arrow[r] & 0
\end{tikzcd}$$
with exact rows. Now put $G':=G^{(1/p^n)}/N^{(1/p^n)}.$ We have a closed immersion $G' \to H^{(1/p^n)}.$ If $\mathscr{H}$ is a Néron (lft-)model of $H$ over $S,$ then $\mathscr{H}^{(1/p^n)}$ is a Néron (lft-)model of $H^{(1/p^n)}$ over $S$ (this follows immediately from the Néron mapping property). In particular, $G'$ admits a Néron (lft-)model over $S$ by \cite[Chapter 10.1, Proposition 4]{BLR}. Moreover, because the left vertical arrow vanishes, the map $G^{(1/p^n)} \to G$ factors through $G'.$ Since the morphisms $g_{G^{(1/p^n)}/K}$ commute with étale base change for all $n\in \N$ by \cite[Proposition 2.5]{BS}, the morphism $G'(L) \to G(L)$ is surjective for all étale algebras $L$ over $K.$ Finally, we observe that $G$ admits Néron lft-models over $\Og_{S,s}$ for all closed points $s\in S.$ Indeed, by \cite[Chapter 10.2, Theorem 2]{BLR} we only need to show that $\Hom_K(\Ga, G)=0.$ However, $\Hom_K(\Ga, N)=0$ because $N$ is totally non-smooth over $K,$ and $\Hom_K(\Ga, H)=0$ by \cite[Chapter 10.1, Proposition 8]{BLR}. Therefore the proposition follows from Lemma \ref{locglobsurjlem}.
\end{proof}

\begin{proposition}
Suppose that $r=1.$ Let $G$ be a smooth wound unipotent weakly permawound algebraic group over $K.$ Then $G$ admits a Néron lft-model over $S.$ \label{permawoundexprop}
\end{proposition}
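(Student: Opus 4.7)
The plan is to exploit the duality from the Corollary following Proposition \ref{étalepwprop} to realise $G^{\RP}$ as the relative perfection of a product of groups of the form $\mathbf{G}_{\mathrm{m}}^{(1/p^{n_i})}/\Gm$, and then to apply Lemma \ref{locglobsurjlem}.

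First I would reduce to the case where $G$ is connected using Proposition \ref{etaleprop}. Set $A:=(G^{\RP})^\vee$; by the Corollary cited, this is a $p$-primary finite étale group scheme over $K.$ Choose a finite separable extension $L/K$ that trivialises $A,$ and shrink $S$ so that $S_L\to S$ becomes finite étale. The Néron lft-model of $G_L$ over $S_L,$ being unique, carries a canonical Galois descent datum, and smooth group schemes descend along the finite étale cover $S_L\to S;$ hence it suffices to prove the result after replacing $K$ by $L,$ i.e.\ in the case where $A$ is constant, say $A\cong \prod_{i=1}^k \Z/p^{n_i}\Z.$ Note that the properties of being wound unipotent and weakly permawound are preserved under this base change, the latter by \cite[Proposition 6.7]{Ros}.

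Under the duality, $G^{\RP}$ is then isomorphic to $H^{\RP},$ where $H := \prod_{i=1}^k \mathbf{G}_{\mathrm{m}}^{(1/p^{n_i})}/\Gm,$ since $\boldsymbol{\nu}_n(1)_K^{\RP}$ corresponds to $\Z/p^n\Z$ under the equivalence (as is visible from iterating the structure of $\boldsymbol{\nu}_1(1)_K$ and the chain $\Gm\subseteq \mathbf{G}_{\mathrm{m}}^{(1/p)}\subseteq \ldots\subseteq \mathbf{G}_{\mathrm{m}}^{(1/p^n)}$). Via the colimit description $\Mor_K(H^{\RP}, G^{\RP})=\varinjlim \Mor_K(H^{(1/p^n)}, G),$ this isomorphism lifts to a morphism of smooth algebraic groups $\psi\colon H^{(1/p^m)}\to G$ for some $m\in \N_0$ whose relative perfection coincides with the fixed isomorphism. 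For any étale $K$-algebra $M,$ the identifications $H^{(1/p^m)}(M)=H^{\RP}(M)$ and $G(M)=G^{\RP}(M)$ (valid because étale morphisms are relatively perfect) show that $\psi(M)$ is the given bijection; in particular, $\psi$ is surjective on étale-local sections.

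Finally, the smooth algebraic group $H^{(1/p^m)} = \prod_i \mathbf{G}_{\mathrm{m}}^{(1/p^{n_i+m})}/\mathbf{G}_{\mathrm{m}}^{(1/p^m)}$ admits a Néron lft-model over $S,$ namely $\prod_i \mathscr{G}_{\mathrm{m}}^{(1/p^{n_i+m})}/\mathscr{G}_{\mathrm{m}}^{(1/p^m)},$ by iterating the construction of the Néron lft-model of $\boldsymbol{\nu}_1(1)_K$ recalled in the bulleted preliminaries. Moreover, $G$ admits Néron lft-models over every $\Og_{S,s}$ by \cite[Chapter 10.2, Theorem 2]{BLR}, since it is wound unipotent. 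Lemma \ref{locglobsurjlem} applied to $\psi$ then furnishes the desired Néron lft-model of $G$ over $S.$ The main obstacle is the passage from the sheaf-theoretic isomorphism $H^{\RP}\cong G^{\RP}$ provided by the duality to the concrete morphism $\psi$ of smooth algebraic groups, and the verification of its étale-local surjectivity; once this is in place, the rest is a mechanical application of the earlier results.
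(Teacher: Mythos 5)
Your argument is correct in substance, but it takes a genuinely different route from the paper. The paper's proof is a direct dévissage: after passing to a separable extension it invokes Rosengarten's structure theorem \cite[Theorem 9.5]{Ros} to filter $G$ with graded pieces isomorphic to $\boldsymbol{\nu}_1(1)_K\cong \mathbf{G}_{\mathrm{m}}^{(1/p)}/\Gm$ or to the totally non-smooth group $N_0=\boldsymbol{\alpha}_p^{(1/p)},$ and then inducts, using Proposition \ref{Dévprop} for the smooth pieces and Proposition \ref{Nnonsmprop} for the non-smooth ones. You instead use the duality classification (the Corollary to Proposition \ref{étalepwprop}) to identify $G^{\RP}$ with $\bigl(\prod_i\mathbf{G}_{\mathrm{m}}^{(1/p^{n_i})}/\Gm\bigr)^{\RP}$ after a separable base change, descend the isomorphism to an honest morphism $\psi\colon H^{(1/p^m)}\to G$ via $\Hom_K(H^{\RP},G^{\RP})=\varinjlim\Hom_K(H^{(1/p^n)},G),$ and conclude with Lemma \ref{locglobsurjlem}; this is a legitimate alternative, and the verification that $\psi$ is bijective on étale-local sections via the adjunction $X^{\RP}(M)=X(M)$ is sound. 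What your route costs is precisely what this section of the paper is designed to avoid: the whole point of Section 5 is an \emph{elementary} proof bypassing the duality on relatively perfect sites, whereas your argument leans on it essentially. It also requires two ingredients you should supply more carefully: (a) the identification of $\bigl((\mathbf{G}_{\mathrm{m}}^{(1/p^n)}/\Gm)^{\RP}\bigr)^\vee$ with $\Z/p^n\Z$ for $n>1$ is true but is not proved in the paper (only the case $n=1$ is stated); you would need to check that $\nu_\infty(1)[p^n]=\nu_n(1)$ and that $\sHom_{K_{\RP}}(\nu_n(1),\nu_n(1))=\Z/p^n\Z,$ or else replace this step by the filtration of $\Z/p^n\Z$ by copies of $\F_p$ and dualise, which brings you back to a dévissage; and (b) the reduction to the trivialising extension $L$ should be justified by \cite[Chapter 10.1, Proposition 4]{BLR} (as the paper does) rather than by Galois descent of the Néron lft-model, since effectivity of descent for these non-quasi-compact schemes is not automatic, and after shrinking $S$ one must recombine with the local models via \cite[Chapter 10.1, Proposition 9]{BLR}. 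Finally, the initial reduction to connected $G$ is unnecessary (the Corollary applies to possibly disconnected wound groups) and, if kept, requires checking that $G^0$ is still weakly permawound.
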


\begin{proof}
Put $N_0:=\boldsymbol{\alpha}_p^{(1/p)}$ (this group is isomorphic to the kernel of $g_{\Ga/K}$). By \cite[Chapter 10.1, Proposition 4]{BLR}, we may replace $S$ by the integral closure of $S$ in some finite separable extension of $K.$ We may therefore assume that $G$ admits a filtration 
$$0 = G_0 \subseteq G_1\subseteq ... \subseteq G_d = G$$ for some $d\in \N_0$ such that $G_{j+1}/G_j$ is isomorphic to $\boldsymbol{\nu}_1(1)_K \cong \mathbf{G}_{\mathrm{m}}^{(1/p)}/\Gm$ or to $N_0$ for all $j=0,..., d-1$ by \cite[Theorem 9.5]{Ros}. In particular, $G_1$ is isomorphic to one of those groups. We shall now prove the claim by induction on $d.$ If $d=1,$ we must have $G\cong  \mathbf{G}_{\mathrm{m}}^{(1/p)}/\Gm$ since $G$ is smooth, in which case the claim follows from \cite[Lemma 2.9]{OvGR} (cf. also \cite[Chapter 10.1, Example 11]{BLR}). In general, $G/G_1$ admits a similar filtration of length $d-1,$ so we may assume that it admits a Néron lft-model. If $G_1\cong N_0,$ then $G$ admits a Néron lft-model by Proposition \ref{Nnonsmprop}; if $G_1 \cong \boldsymbol{\nu}_1(1)_K,$ then $G$ admits a Néron lft-model by dévissage.
\end{proof}

The tools so far assembled allow us to settle the unipotent case:

\begin{proposition}
Suppose that $r=1.$ Let $G$ be a smooth connected wound unipotent algebraic group over $K.$ Then $G$ admits a Néron lft-model over $S.$ \label{woundexistprop}
\end{proposition}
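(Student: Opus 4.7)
The plan is to reduce the statement to the case where $G$ is $p$-torsion via dévissage, and then to embed such a $G$ into a smooth wound weakly permawound group using Rosengarten's embedding theorem; at that point, Proposition \ref{permawoundexprop} and the fact that closed subgroups inherit Néron lft-models allow us to conclude.

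More precisely, by \cite[Chapter 10.2, Lemma 12]{BLR}, the smooth connected wound unipotent algebraic group $G$ admits a finite filtration $0 = G_0 \subseteq G_1 \subseteq \cdots \subseteq G_d = G$ by smooth wound subgroups whose successive quotients $G_{j+1}/G_j$ are smooth, wound unipotent, and annihilated by $p.$ I would proceed by induction on $d.$ The base case is trivial, and the inductive step follows by dévissage (Proposition \ref{Dévprop}) from the exact sequence $0 \to G_1 \to G \to G/G_1 \to 0,$ since $G/G_1$ inherits a filtration of length $d-1$ of the same type and therefore admits a Néron lft-model by the induction hypothesis. It thus suffices to produce a Néron lft-model for $G_1,$ which is smooth, wound, and $p$-torsion (but not necessarily connected). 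Applying Proposition \ref{etaleprop} reduces the problem further to the identity component $G_1^0,$ which is smooth, connected, wound unipotent, and annihilated by $p$ (note that woundness is preserved by passing to $G_1^0$ because any homomorphism $\Ga \to G_1^0$ composed with the inclusion $G_1^0 \hookrightarrow G_1$ must vanish).

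At this point, Rosengarten's embedding theorem \cite[Theorem 1.4]{Ros} yields a closed immersion $G_1^0 \hookrightarrow H$ into a smooth wound weakly permawound algebraic group $H$ over $K.$ Since we are assuming $r=1,$ Proposition \ref{permawoundexprop} guarantees that $H$ admits a Néron lft-model over $S.$ Finally, \cite[Chapter 10.1, Proposition 4]{BLR} ensures that any closed subgroup of a smooth algebraic group admitting a Néron lft-model also admits one, so $G_1^0$ admits a Néron lft-model over $S,$ completing the induction.

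The key non-trivial input is Rosengarten's embedding theorem; without it, there is no direct way to pass from an arbitrary smooth connected $p$-torsion wound unipotent group to one of the highly structured groups built up from $\boldsymbol{\nu}_1(1)_K$ and $N_0$ that Proposition \ref{permawoundexprop} actually treats. The rest of the argument is formal book-keeping: a dévissage to the $p$-torsion case, a reduction to the identity component, and the inheritance of Néron lft-models by closed subgroups.
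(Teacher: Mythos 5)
Your proposal is correct and follows essentially the same route as the paper: filter $G$ into smooth connected wound $p$-torsion quotients via \cite[Chapter 10.2, Lemma 12]{BLR}, embed each quotient into a smooth wound weakly permawound group by Rosengarten's theorem \cite[Theorem 1.4]{Ros}, invoke Proposition \ref{permawoundexprop} together with \cite[Chapter 10.1, Proposition 4]{BLR}, and conclude by dévissage. Your extra reduction to the identity component of $G_1$ is harmless but unnecessary, since the cited lemma already produces connected successive quotients when $G$ is connected.
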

\begin{proof}
By \cite[Chapter 10.2, Lemma 12]{BLR}, we can find a filtration $0=G_0 \subseteq G_1 \subseteq ... \subseteq G_d = G$ whose successive quotients are smooth, connected, wound unipotent, and annihilated by $p.$ For each $j=0,..., d-1,$  there exists a smooth wound unipotent weakly permawound algebraic group $P_j$ over $K$ and a closed immersion $G_{j+1}/G_j \to P_j$ by \cite[Theorem 1.4]{Ros}. By Proposition \ref{permawoundexprop} and \cite[Chapter 10.1, Proposition 4]{BLR}, all the $G_{j+1}/G_j$ admit Néron lft-models over $S;$ hence so does $G$ by dévissage.  
\end{proof}

We shall now give the \it proof of Theorem \ref{ExistenceThm}. \rm Recall that $S$ is an excellent Dedekind scheme over $\F_p$ whose field of rational functions $K$ satisfies $[K:K^p]=p.$ Let $G$ be a smooth algebraic group over $K$ such that $\Hom_K(\Ga, G)=0.$ Let $T$ be the maximal torus of $G$ \cite[Exposé XIV, Théorème 1.1]{SGAIII2}. If we can show that $G/T$ admits a Néron lft-model over $S,$ then the Theorem follows by dévissage together with \cite[Chapter 10.1, Proposition 6]{BLR}. Note that, since $\Ext^1_K(\Ga, T)=0$ \cite[Exposé XVII, Théorème 6.1.1 A) ii)]{SGAIII2}, we have $\Hom_K(\Ga, G/T)=0$. In particular, we may assume that $G$ does not contain a non-trivial torus. By \cite[Chapter 9.1, Theorem 1]{BLR}, there exists an affine algebraic group $L$ over $K$ and an exact sequence $0 \to L \to G \to A \to 0,$ where $A$ is an Abelian variety over $K.$ Because $G$ contains neither a copy of $\Ga$ nor a torus, the identity component $L^{\natural,0}$ of $L^\natural$ is wound unipotent (cf. \cite[Proposition A.2.11]{CGP}). Therefore $L^\natural$ admits a Néron lft-model by Propositions \ref{etaleprop} and \ref{woundexistprop}. Hence it suffices to show that $G/L^\natural$ admits a Néron lft-model by dévissage. However, we have an exact sequence
$$0 \to L/L^\natural \to G/L^\natural \to A \to 0,$$ and $L/L^\natural$ is totally non-smooth over $K$ by the maximality of $L^\natural$ among smooth closed subgroups of $L.$ Since Abelian varieties over $K$ admit Néron models over $S$ \cite[Chapter 1.4, Theorem 3]{BLR}, this follows from Proposition \ref{Nnonsmprop}. \qed \\

As an application, we shall now give an elementary proof of the existence of global Néron models for pseudo-Abelian varieties. Recall that a \it pseudo-Abelian variety \rm \cite[Definition 0.1, Theorem 2.1]{Totaro} over a field $\kappa$ is a smooth connected commutative algebraic group $P$ over $\kappa$ such that the largest smooth connected affine closed subgroup of $P$ is trivial. 
\begin{theorem}
Suppose that $r<\infty.$ Let $P$ be a smooth algebraic group over $K$ such that the maximal smooth connected affine subgroup $U\subseteq P$ admits a Néron (lft-)model over $S.$ Then so does $P.$ In particular, every pseudo-Abelian variety over $K$ admits a Néron model over $S.$ 
\end{theorem}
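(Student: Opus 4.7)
The strategy is to mimic the endgame of the proof of Theorem \ref{ExistenceThm}, substituting $U$ for the group $L^{\natural,0}$ that was automatically controllable there because of the $\Hom_K(\Ga,G)=0$ hypothesis.

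First I would apply \cite[Chapter 9.1, Theorem 1]{BLR} to obtain an exact sequence
$$0 \to L \to P \to A \to 0$$
with $L$ affine of finite type over $K$ and $A$ an Abelian variety, and then verify that $U = L^{\natural,0}$. Indeed the composite $U\to P\to A$ is trivial because $U$ is affine and $A$ is an Abelian variety, so $U\subseteq L$, and smoothness and connectedness of $U$ force $U\subseteq L^{\natural,0}$; the reverse inclusion follows from the maximality of $U$, since $L^{\natural,0}$ is smooth, connected, affine, and closed in $P$.

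Next, three assemblies of dévissage and totally non-smooth absorption complete the argument. (a) The finite étale group scheme $L^\natural/L^{\natural,0}$ admits a Néron model by Proposition \ref{etaleprop}; combined with the hypothesis on $U=L^{\natural,0}$, Proposition \ref{Dévprop} yields a Néron (lft-)model of $L^\natural$. (b) In the exact sequence
$$0 \to L/L^\natural \to P/L^\natural \to A \to 0,$$
the group $L/L^\natural$ is totally non-smooth by maximality of $L^\natural$ among smooth closed subgroups of $L$ (exactly as exploited in the proof of Theorem \ref{ExistenceThm}), while $A$ admits a Néron model by \cite[Chapter 1.4, Theorem 3]{BLR}. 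Since $r<\infty$ by hypothesis, Proposition \ref{Nnonsmprop} supplies a Néron (lft-)model of $P/L^\natural$. (c) A final dévissage applied to $0\to L^\natural \to P \to P/L^\natural \to 0$ produces the desired Néron (lft-)model of $P$.

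For the ``in particular'' assertion, a pseudo-Abelian variety $P$ has $U=0$ by definition, which trivially admits a Néron model; the three steps above then produce genuine Néron models throughout, using the quasi-compact version of dévissage \cite[Chapter 7.5, Proposition 1]{BLR} in (a) and (c) and noting that the construction in the proof of Proposition \ref{Nnonsmprop} preserves finite presentation (it is a quotient of a finite-type Weil restriction of the Néron model of $A$). I do not anticipate any substantive obstacle: the real difficulties — absorbing the totally non-smooth kernel $L/L^\natural$ and executing the dévissages — have already been encapsulated in the propositions cited, so the task reduces to the bookkeeping identification $U=L^{\natural,0}$ and a careful tracking of quasi-compactness in the pseudo-Abelian case.
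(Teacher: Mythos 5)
Your proposal is correct and follows essentially the same route as the paper's proof: Chevalley's decomposition $0\to L\to P\to A\to 0$, the identification $U=L^{\natural,0}$, absorption of the totally non-smooth quotient $L/L^\natural$ via Proposition \ref{Nnonsmprop}, and dévissage through the finite étale piece $L^\natural/U$. The only difference is bookkeeping order (you build up from $U$ to $L^\natural$ to $P$, while the paper passes to $P/U$ and invokes the ``iff'' of Proposition \ref{etaleprop}), and your explicit verification of $U=L^{\natural,0}$ and of quasi-compactness in the pseudo-Abelian case fills in details the paper leaves implicit.
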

\begin{proof}
Let $L$ be a closed affine subgroup of $P$ such that $P/L$ is an Abelian variety \cite[Chapter 9.1, Theorem 1]{BLR}. Then $U\subseteq L^\natural,$ and the quotient $L^\natural / U$ is étale over $K.$ By dévissage and the existence of global Néron models of Abelian varieties \cite[Chapter 1.4, Theorem 3]{BLR}, $P/U$ admits a Néron model over $S.$ By dévissage and \cite[Chapter 7.2, Proposition 4, Chapter 10.1, Proposition 4]{BLR}, $P$ admits a Néron (lft-)model if and only if so does $U,$ which proves the first claim. If $P$ is pseudo-Abelian, we have $U=0$ by definition, so the second claim also follows. 
\end{proof}

We conclude by giving an arithmetic application of the existence of Néron (lft-)models:
\begin{theorem}
Let $S$ be a (not necessarily excellent) Dedekind scheme with field of rational functions $K.$ For a closed point $s\in S,$ let $K_s^{\mathrm{sh}}$ be a maximal extension of $K$ unramified at $s$ with completion $\widehat{K}_s^{\mathrm{sh}}.$ Let $f\colon G\to H$ be a smooth surjective morphism of algebraic groups over $K,$ and assume that $H$ admits a Néron model over $S.$ Then, for all but finitely many closed points $s$ of $S,$ the morphisms $f(K_s^{\mathrm{sh}})$ and $f(\widehat{K}_s^{\mathrm{sh}})$ are surjective. 
\end{theorem}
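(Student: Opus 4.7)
The plan is to spread out the kernel of $f$ over a dense open $U\subseteq S$ and then deduce the result from the vanishing of higher étale cohomology on strictly henselian local schemes. Since $f$ is smooth with smooth target, its scheme-theoretic kernel $N$ is a smooth algebraic group over $K.$ Let $\mathscr{H}$ denote the given Néron model of $H$ over $S.$ By standard spreading-out arguments, after shrinking $S$ to a dense open subscheme $U$ (removing only finitely many closed points) one obtains a short exact sequence
$$0 \to \mathscr{N} \to \mathscr{G} \to \mathscr{H}|_U \to 0$$
of smooth group schemes of finite presentation over $U,$ whose morphism $\mathscr{G}\to \mathscr{H}|_U$ is smooth and surjective and whose generic fibre is the exact sequence $0 \to N \to G \to H \to 0.$

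Next I fix a closed point $s \in U$ and set $R:=\Og_{S,s}^{\mathrm{sh}}$ in the first case and $R:=\widehat{\Og}_{S,s}^{\mathrm{sh}}$ in the second; both are strictly henselian discrete valuation rings with fraction field $K_s^{\mathrm{sh}}$ or $\widehat{K}_s^{\mathrm{sh}},$ respectively. Base-changing the short exact sequence to $\Spec R$ and taking étale cohomology produces
$$\mathscr{G}(R) \to \mathscr{H}(R) \to H^1_{\et}(\Spec R, \mathscr{N}_R).$$
The last term vanishes, since the étale cohomology of an abelian sheaf on the spectrum of a strictly henselian local ring is trivial in positive degrees and, for the smooth commutative group scheme $\mathscr{N}_R,$ étale and fppf cohomology coincide by Grothendieck's theorem. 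Hence $\mathscr{G}(R) \to \mathscr{H}(R)$ is surjective, and combining with the obvious inclusion $\mathscr{G}(R) \subseteq G(\Frac R)$ and the identification $\mathscr{H}(R) = H(\Frac R),$ one concludes that $f(\Frac R)$ is surjective, which is the desired claim.

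The main technical point is therefore the identification $\mathscr{H}(R) = H(\Frac R).$ For $R=\Og_{S,s}^{\mathrm{sh}}$ this is straightforward: writing $R$ as a filtered colimit of local rings $\Og_{E,e}$ of étale neighbourhoods $E \to S$ of $s,$ the finite presentation of $\mathscr{H}$ makes $\mathscr{H}(-)$ commute with the colimit, and the Néron mapping property applied to each étale $E$ gives $\mathscr{H}(\Og_{E,e}) = H(\Frac \Og_{E,e}),$ whose colimit is $H(K_s^{\mathrm{sh}}).$ For $R=\widehat{\Og}_{S,s}^{\mathrm{sh}}$ the morphism $\Spec R \to S$ is not smooth, and one must instead invoke the compatibility of Néron models with base change to the completion of a strictly henselian local ring of the base, treated in \cite[Chapter 7.6]{BLR}; once $\mathscr{H}\times_S \Spec R$ is known to remain Néron over $R,$ the Néron mapping property applied to $T=\Spec R$ yields the equality. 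This last step is the one I anticipate to require the greatest care, especially as $S$ is not assumed excellent.
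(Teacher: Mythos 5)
Your argument is correct and follows essentially the same route as the paper: spread out to a smooth surjective morphism $\mathscr{G}\to\mathscr{H}$ of models over a dense open $U\subseteq S$, lift $R$-points over the strictly henselian local rings $R=\Og_{S,s}^{\mathrm{sh}}$ and $\widehat{\Og}_{S,s}^{\mathrm{sh}}$, and identify $\mathscr{H}(R)$ with $H(\Frac R)$ via the Néron mapping property (for the completed case the relevant compatibility is \cite[Chapter 7.2]{BLR} rather than Chapter 7.6). The only cosmetic difference is that you phrase the lifting step through the vanishing of $H^1_{\et}(\Spec R, \mathscr{N}_R)$ for the spread-out kernel, whereas the paper simply uses that a smooth surjective morphism admits sections over a strictly henselian local base, so the kernel is not needed.
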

\begin{proof}
Let $\mathscr{H}$ be the Néron model $H$ over $S.$ After shrinking $S$ if necessary, there exists a smooth separated model $\mathscr{G}\to S$ of $G$ of finite type. Therefore, by passing to the limit, there exists a dense open subset $U\subseteq S$ such that induced morphism $\mathscr{G} \to \mathscr{H}$ is smooth and surjective. For a closed point $s\in S,$ let $\Og_{S,s}^{\mathrm{sh}}$ be the strict Henselization (with respect to a choice of separable closure of $\kappa(s)$) of $\Og_{S,s}.$ Then the morphism $\mathscr{G}(\Og_{S,s}^{\mathrm{sh}}) \to \mathscr{H}(\Og_{S,s}^{\mathrm{sh}})$ is surjective as soon as $s\in U.$ Since $\mathscr{H}(\Og_{S,s}^{\mathrm{sh}})=H(K_s^{\mathrm{sh}})$ by the Néron mapping property, the claim follows. The proof for $f(\widehat{K}_s^{\mathrm{sh}})$ is analogous.
\end{proof}

\noindent $\mathbf{Remark.}$ If we only required the existence of a Néron lft-model of $H,$ the conclusion would fail, as shown by the map $[n] \colon \Gm \to \Gm$ for an integer $n$ invertible on $S.$

\end{document}